\documentclass[11pt]{amsart}
\usepackage{amsmath,amssymb,amsthm}
\usepackage[margin=1.2in]{geometry}
\usepackage[hidelinks]{hyperref}

\newtheorem{theorem}{Theorem}[section]
\newtheorem{lemma}[theorem]{Lemma}
\newtheorem{proposition}[theorem]{Proposition}
\newtheorem{corollary}[theorem]{Corollary}
\newtheorem{conjecture}[theorem]{Conjecture}
\newtheorem{claim}[theorem]{Claim}
\theoremstyle{definition}
\newtheorem{definition}[theorem]{Definition}
\newtheorem{remark}[theorem]{Remark}
\newtheorem{question}[theorem]{Question}

\newcommand{\Sin}[1]{\Sigma^{\mathrm{in}}_{#1}}
\newcommand{\Pin}[1]{\Pi^{\mathrm{in}}_{#1}}
\newcommand{\dSin}[1]{\mathrm{d}\text{-}\Sigma^{\mathrm{in}}_{#1}}
\newcommand{\Sc}[1]{\Sigma^{\mathrm{c}}_{#1}}
\newcommand{\Pc}[1]{\Pi^{\mathrm{c}}_{#1}}
\newcommand{\bSig}[1]{\boldsymbol{\Sigma}^{0}_{#1}}
\newcommand{\bPi}[1]{\boldsymbol{\Pi}^{0}_{#1}}
\newcommand{\A}{\mathcal{A}}
\newcommand{\B}{\mathcal{B}}
\newcommand{\Ccal}{\mathcal{C}}
\newcommand{\T}{\mathcal{T}}
\newcommand{\Scal}{\mathcal{S}}
\newcommand{\ch}{\operatorname{ch}}
\newcommand{\dg}{\operatorname{deg}}
\newcommand{\Lev}{\operatorname{Lev}}
\newcommand{\ran}{\operatorname{ran}}
\newcommand{\dom}{\operatorname{dom}}
\newcommand{\SR}{\operatorname{SR}}
\newcommand{\SSC}{\operatorname{SSC}}
\newcommand{\pSR}{\operatorname{pSR}}
\newcommand{\Mod}{\operatorname{Mod}}
\newcommand{\leT}{\leq_T}

\title[Scott complexity of trees of finite rank]{Scott complexity of trees of finite rank\\ via degrees of categoricity}

\subjclass[2020]{Primary 03C57; Secondary 03C75, 03D45, 03C15}
\keywords{Scott rank, Scott sentence complexity, degree of categoricity, computable structure theory, infinitary logic, trees of finite rank, back-and-forth}

\author{Mohammad A. Mahmoud}
\address{Department of Math and Computational Sciences, University of Toronto Mississauga, Mississauga, ON, Canada}
\email{mo.mahmoud@utoronto.ca}

\author{Mostafa Mirabi}
\address{The Taft School, Watertown, CT 06795, USA and Wesleyan University, Middletown, CT 06459, USA}
\email{mmirabi@wesleyan.edu}
\urladdr{https://sites.google.com/site/mostafamirabi}

\begin{document}

\begin{abstract}
In earlier work \cite{Mah19}, the first author constructed, for each finite $m\geq 1$, a computable tree $\A_{m+1}$ of rank $m+1$ whose strong degree of categoricity is $\mathbf{0}^{(2m)}$, and showed this degree is optimal at each rank. Those results are lightface: they concern Turing degrees of isomorphisms between computable copies. In this paper we determine the boldface content of the construction. We isolate a transfer principle: a degree-of-categoricity lower bound that holds uniformly relative to every oracle defeats $L_{\omega_1\omega}$-definability of automorphism orbits outright. We verify that the construction of \cite{Mah19} has this uniformity, and deduce that $\A_{m+1}$ has Scott rank exactly $2m+1$, with Scott sentence complexity one of $\Sin{2m+1}$, $\dSin{2m+1}$, or $\Pin{2m+2}$. For rank $2$ we carry out a complete, computability-free Scott analysis: the orbit of the level-one nodes of infinite degree is $\Pin{2}$- but not $\Sin{2}$-definable, and $\SSC(\A_2)=\Pin{4}$ exactly, witnessed by a computable $\Pc{4}$ Scott sentence. We then prove $\SSC(\A_{m+1})=\Pin{2m+2}$ for all finite $m$: among the three candidates, only $\Pin{2m+2}$ is consistent with the parameterized Scott rank, and a parameter-reservation argument---naming any finite tuple reserves only finitely many top-level subtrees, and the relativized coding survives on the infinitely many spare subtrees---pins $\pSR(\A_{m+1})=2m+1$, selecting that candidate. These results begin a classification of the Scott sentence complexities of trees of finite rank, in analogy with the Gonzalez--Rossegger analysis of linear orders, and connect the $2\alpha$-jump phenomenon in degrees of categoricity to Scott spectral-gap questions for trees.
\end{abstract}

\maketitle

\section{Introduction}

Two traditions measure how hard it is to identify a countable structure up to isomorphism. The first is computational: a computable structure $\A$ has \emph{degree of categoricity} $\mathbf{d}$ if $\mathbf{d}$ is the least Turing degree computing isomorphisms between any two computable copies of $\A$ \cite{FKM10}. The second is definitional: by Scott's theorem every countable structure has a sentence of $L_{\omega_1\omega}$ characterizing it among countable structures, and one asks for the least complexity --- $\Sin{\alpha}$, $\Pin{\alpha}$, or $\dSin{\alpha}$ --- of such a \emph{Scott sentence} \cite{AGHTT21,Mon15}. The bridge between the two is Montalb\'an's robust Scott rank \cite{Mon15}: a structure has a $\Pin{\alpha+1}$ Scott sentence if and only if all of its automorphism orbits are $\Sin{\alpha}$-definable, if and only if it is \emph{boldface} $\mathbf{\Delta}^0_\alpha$-categorical.

In \cite{Mah19}, we proved that for every computable ordinal $\alpha$ there is a computable tree of rank $\alpha+1$ with strong degree of categoricity $\mathbf{0}^{(2\alpha)}$ when $\alpha$ is finite (and $\mathbf{0}^{(2\alpha+1)}$ when $\alpha$ is infinite), that these are the greatest possible degrees of categoricity at each rank, and that the isomorphism problem for computable trees of rank $\alpha$ is $\Pi^0_{2\alpha}$-complete. The present paper answers the natural follow-up question: \emph{what does the hardness construction of \cite{Mah19} say about definability?} While a strong degree of categoricity bounds the Turing complexity of isomorphisms between computable copies, this does not strictly bound the descriptive complexity of the structure's Scott family in  $L_{\omega_1\omega}$. Because Montalbán's equivalence relies on boldface pointclasses, a non-arithmetic Scott family may not be witnessed by the Turing degrees of isomorphisms between any single fixed pair of computable copies (see Remark \ref{rem:gap}).

Our first observation is a transfer principle that converts \emph{relativized} categoricity hardness into outright non-definability.

\begin{proposition}[Transfer principle; Proposition~\ref{prop:transfer}]\label{prop:intro-transfer}
Let $\A$ be a countable structure in a relational language and let $n\geq 1$ be finite. Suppose that for every $Z\subseteq\omega$ there are $Z$-computable copies $\B_Z,\Ccal_Z$ of $\A$ such that no isomorphism $f\colon\B_Z\to\Ccal_Z$ satisfies $f\leT Z^{(n-1)}$. Then some automorphism orbit of $\A$ is not $\Sin{n}$-definable; consequently $\A$ has no $\Pin{n+1}$ Scott sentence and $\SR(\A)\geq n+1$.
\end{proposition}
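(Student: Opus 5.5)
We argue by contraposition. Suppose every automorphism orbit of $\A$ is $\Sin{n}$-definable; we will produce an oracle $Z$ such that for \emph{all} $Z$-computable copies $\B,\Ccal$ of $\A$ some isomorphism is $Z^{(n-1)}$-computable, contradicting the hypothesis. By Montalb\'an's theorem \cite{Mon15}, the assumption is equivalent to $\A$ having a $\Pin{n+1}$ Scott sentence and to $\SR(\A)\leq n$. It also gives a countable Scott family: each orbit of a finite tuple $\bar a$ is defined by some $\Sin{n}$ formula $\varphi_{\bar a}(\bar x)$, and there are only countably many tuples. So the Scott family $\{\varphi_{\bar a}\}$ is a countable set of $\Sin{n}$ formulas, and a countable set of infinitary formulas can be coded by a real. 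Fix $Y\subseteq\omega$ coding all of them, so that each $\varphi_{\bar a}$ is a $Y$-computable $\Sc{n}$ formula, uniformly in an index. Taking $Y$ so that every disjunction occurring in the $\varphi_{\bar a}$ is $Y$-c.e.\ is routine, because we may simply put the full syntax trees of all the formulas into $Y$.

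Now set $Z=Y$ and let $\B,\Ccal$ be any $Z$-computable copies. For a $Z$-computable structure, satisfaction of a $Z$-computable $\Sc{n}$ formula is $\Sigma^0_n(Z)$, so it is c.e.\ in $Z^{(n-1)}$, uniformly in the formula index. The isomorphism is then built by the standard back-and-forth argument relative to $Z^{(n-1)}$. Suppose we have matched $\bar b\in\B$ to $\bar c\in\Ccal$ with $\B\models\varphi(\bar b)$ and $\Ccal\models\varphi(\bar c)$ for the same Scott-family formula $\varphi$, so that $\bar b$ and $\bar c$ realize the same orbit type. To extend by a new element $d$ of $\B$, we $Z^{(n-1)}$-effectively search for a formula $\psi(\bar x,y)$ in the family with $\B\models\psi(\bar b,d)$. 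This search terminates, since $\psi_{\bar b d}$ exists and its truth is c.e.\ in $Z^{(n-1)}$, and $\psi$ must then isolate the type of $(\bar b,d)$. We next search for $e\in\Ccal$ with $\Ccal\models\psi(\bar c,e)$. Such an $e$ exists because the formulas isolate orbits: $(\bar b,d)$ realizes $\psi$, so $\exists y\,\psi(\bar x,y)$ holds of $\bar b$, hence of $\bar c$, since the two tuples have the same type. The back step is symmetric, and alternating forth and back yields an isomorphism computable in $Z^{(n-1)}$, the required contradiction.

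The main subtlety is that we must use formulas of the Scott family themselves, and not arbitrary $\Sin{n}$ formulas, at each step. The invariant ``$\bar b$ and $\bar c$ satisfy a common $\varphi_{\bar a}$'' implies that they lie in the same automorphism orbit, because $\varphi_{\bar a}$ defines that orbit in $\A$ and hence in every copy. This is exactly what guarantees that the existential witness $e$ exists. We also need each relation symbol to be handled correctly, but since the language is relational, a $Z$-computable copy has $Z$-computable atomic diagram and nothing else is required. The consequences then follow from Montalb\'an's equivalence: a $\Pin{n+1}$ Scott sentence would make every orbit $\Sin{n}$-definable, and so would $\SR(\A)\le n$. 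Hence $\A$ has no $\Pin{n+1}$ Scott sentence and $\SR(\A)\geq n+1$.
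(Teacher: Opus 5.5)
Your proposal is correct and is essentially the paper's argument. You code a parameterless $\Sin{n}$ Scott family by a real, take $Z$ to be that real, and run a back-and-forth that only ever searches for witnesses, never deciding $\Sin{n}$ truth. You phrase this as $\Sigma^0_n(Z)$ satisfaction being c.e.\ in $Z^{(n-1)}$, where the paper searches for $\Pin{n-1}$-matrix witnesses decidable in $(\B\oplus\Ccal\oplus X_0)^{(n-1)}$; the remaining differences (finding $\psi$ and then $e$ sequentially rather than jointly, and working directly with $Z$-computable copies) are cosmetic.
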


The proof is a back-and-forth argument: a parameterless $\Sin{n}$ Scott
family, once coded by a real, yields isomorphisms computable in the
$(n-1)$st jump of that real. This is the boldface form of the relative
$\Delta^0_n$-categoricity argument originating with
Ash--Knight--Manasse--Slaman and Chisholm \cite{AKMS89,Chi90}; the point
is the quantifier order: the oracle $Z$ is chosen \emph{after} a putative
Scott family, so no fixed family can stay ahead of all $Z$. Our second observation is that the construction of \cite{Mah19} is uniform in an oracle --- this uniformity was built into \cite{Mah19} (the representation lemmas there are proved for an arbitrary oracle $U$) --- and hence supplies the hypothesis of the transfer principle. Writing $[n:N]$ for the tree types of \cite{Mah19} (Definition~\ref{def:types}) and
\[
\A_{m+1} \;:=\; [m+1:\omega],
\]
the common isomorphism type of the two copies built in \cite{Mah19} at rank $m+1$, we obtain:

\begin{theorem}[Theorems~\ref{thm:relhard} and \ref{thm:SR}]\label{thm:intro-general}
Let $m\geq 1$ be finite.
\begin{enumerate}
\item For every $Z\subseteq\omega$ there are $Z$-computable copies $\T_Z,\Scal_Z$ of $\A_{m+1}$ such that every isomorphism $f\colon \T_Z\to\Scal_Z$ computes $Z^{(2m)}$.
\item $\SR(\A_{m+1}) = 2m+1$. In particular some automorphism orbit of $\A_{m+1}$ is not $\Sin{2m}$-definable, while every orbit is $\Sin{2m+1}$-definable.
\item $\SSC(\A_{m+1}) \in \{\Sin{2m+1},\, \dSin{2m+1},\, \Pin{2m+2}\}$.
\end{enumerate}
\end{theorem}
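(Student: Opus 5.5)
The proof has three parts: relativize the construction of \cite{Mah19}, feed it to the transfer principle for the lower bound, and get the upper bound from a direct definability analysis of $[n:N]$-types.

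\textbf{Part (1).} I would rerun the construction of \cite{Mah19} relative to an arbitrary oracle $Z$. The representation lemmas there are already stated for an arbitrary oracle $U$. For a $\Sigma^Z_{2m+1}$ (or $\Pi^Z_{2m}$) set one builds a $Z$-computable tree of type $[n:N]$ whose subtrees encode membership. Assembling these as in \cite{Mah19} gives two $Z$-computable copies $\T_Z,\Scal_Z$ of $[m+1:\omega]$. In $\T_Z$ the top-level subtrees are arranged in a fixed standard order. In $\Scal_Z$ the type of the $k$th top-level subtree is decided by whether $k\in Z^{(2m)}$. Any isomorphism $f$ then lets us decide $k\in Z^{(2m)}$: look at where $f$ sends the distinguished subtree and read off its type, using only the finitely many levels that $f$ reveals. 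This is a faithful relativization, and the main check is that every step uses $Z$ only as an oracle for the coded sets.

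\textbf{Part (2), lower bound.} Apply Proposition~\ref{prop:intro-transfer} with $n=2m$. No isomorphism $\T_Z\to\Scal_Z$ is $\leT Z^{(2m-1)}$, because any isomorphism computes $Z^{(2m)}$, and $Z^{(2m)}\not\leT Z^{(2m-1)}$. Hence some orbit is not $\Sin{2m}$-definable, there is no $\Pin{2m+1}$ Scott sentence, and $\SR(\A_{m+1})\geq 2m+1$.

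\textbf{Part (2), upper bound.} This is the main obstacle: we must show every automorphism orbit is $\Sin{2m+1}$-definable. I would prove by induction on $n$ that for each type $[n:N]$ there is a formula $\varphi_{n,N}(x)$ expressing ``the subtree below $x$ has type $[n:N]$''. The formula should be $\Pin{2n}$ (or a conjunction of $\Sin{2n-1}$ and $\Pin{2n}$ parts), mirroring the $\Pi^0_{2\alpha}$-completeness of the isomorphism problem in \cite{Mah19}. Each inductive step has two parts:
\begin{enumerate}
\item ``at least $k$ children of type $\tau$'' is $\Sin{}$ over the child formulas;
\item ``exactly $k$'' or ``infinitely many'' adds one further $\Pin{}$ layer.
\end{enumerate}
So each increase in rank costs two quantifier alternations. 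A tuple $\bar a$ in a tree is determined up to automorphism by the meet-closure of $\bar a$ together with the isomorphism types of the subtrees hanging off each node of that closure. This description is an existential over the finite meet-closure followed by a conjunction of formulas $\varphi_{n,N}$ for $n\leq m$ on the hanging subtrees. The sibling subtrees at the top level have rank at most $m$, and ``the remaining siblings have the right multiset of types'' costs $\Pin{2m}$. The whole description is therefore $\Sin{2m+1}$. The delicate part is bookkeeping the counting conditions on remaining siblings so that they stay within $\Pin{2m}$, especially when infinitely many siblings share a type. By Montalbán's theorem this gives a $\Pin{2m+2}$ Scott sentence, and $\SR=2m+1$.

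\textbf{Part (3).} Since $\SR(\A_{m+1})=2m+1$, there is a $\Pin{2m+2}$ Scott sentence and no $\Pin{2m+1}$ one. By the standard classification of Scott sentence complexities \cite{AGHTT21,Mon15}, the least complexity then lies above $\Pin{2m+1}$ and at or below $\Pin{2m+2}$. The classes strictly between these are exactly $\Sin{2m+1}$ and $\dSin{2m+1}$, and $\Pin{2m+2}$ itself remains possible. This gives the three stated candidates.
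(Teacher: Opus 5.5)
Your parts (1), the lower bound of (2), and (3) follow the paper's proof. The paper relativizes \cite[Lem.~3.2]{Mah19}, decodes by transporting membership in $O^m_\omega$ to the copy where it is read off by position, applies Proposition~\ref{prop:transfer} with $n=2m$, and then eliminates candidates. There are three small corrections:
\begin{itemize}
\item The coded set should be $Z^{(2m)}\in\Sigma^Z_{2m}$ (Lemma~3.2 of \cite{Mah19} with $k=m-1$), not a $\Sigma^Z_{2m+1}$ set. By your own ladder, ``$T[x]\cong[m:\omega]$'' is only $\Pin{2m}$, so rank-$m$ subtrees cannot code more.
\item You should say why the assembled tree is $\A_{m+1}$ for every $Z$: the odd nodes supply all finite types, and $Z^{(2m)}$ is coinfinite. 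The transfer principle needs one fixed isomorphism type.
\item In (3) the criterion is ``included in $\Pin{2m+2}$ but not in $\Pin{2m+1}$'', not ``strictly between'', since $\Sin{2m+1}$ is incomparable with $\Pin{2m+1}$.
\end{itemize}

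\textbf{The upper bound in (2) is where you genuinely differ.} The paper never writes orbit formulas. Lemma~\ref{lem:relcat} relativizes \cite[Lem.~2.1, Prop.~2.2]{Mah19}: isomorphism of the rank-$\le m$ level-one subtrees is $\Pi^0_{2m}(X\oplus Y)$, so the back-and-forth runs in $(X\oplus Y)^{(2m)}$. This gives parameter-free uniform $\mathbf{\Delta}^0_{2m+1}$-categoricity of every tree of rank $\le m+1$, and the paper then quotes (4)$\Rightarrow$(2) of Theorem~\ref{thm:montalban}.

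Your route instead exhibits the $\Sin{2m+1}$ Scott family directly from the ladder ($[n:\omega]$ is $\Pin{2n}$, $[n:j]$ is $\dSin{2n-1}$). It is computability-free and shows concretely that $O^m_\omega$ is $\Pin{2m}$-definable. It also proves the upper half of the definability ladder that Section~\ref{sec:questions} only anticipates. The paper's route is shorter given \cite{Mah19}.

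To make your version work, fix two points.
\begin{itemize}
\item Use the downward closure $S$ of $\bar a$ (all predecessors of entries), not the meet-closure. The orbit depends on the subtree types at every node on the paths to the root, not only at meets.
\item Your clause that the remaining top-level siblings having the right multiset ``costs $\Pin{2m}$'' is false. ``Infinitely many further level-one nodes of type $[m:\omega]$'' is $\bigwedge_k\exists c_1\cdots c_k\bigwedge_i\varphi_{m,\omega}(c_i)$, which is $\Pin{2m+2}$ and would destroy the bound.
\end{itemize}

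The fix for the second point is that no counting clause is needed at all. The root's multiset of children's types is an invariant of $\A_{m+1}$ itself. For $s\in S$ at level $\geq 1$, the multiset of children of $s$ outside $S$ is determined by the type of $T[s]$ and the types of the children of $s$ lying in $S$, since this is finite subtraction from multiplicities in $\omega\cup\{\omega\}$.

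So the orbit of $\bar a$ is defined by $\exists\bar g$ applied to the conjunction of three things: the quantifier-free diagram of $\bar a\bar g$, downward closedness (a $\Pin{1}$ clause), and $\varphi_{\mathrm{type}(s)}(s)$ for $s\in S\setminus\{\rho\}$. This formula is $\Sin{2m+1}$. Matching children by type, subtree by subtree, then yields the automorphism.
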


For the smallest nontrivial case we determine $\SSC(\A_2)$ exactly, by a direct syntactic analysis with no computability anywhere.

\begin{theorem}[Section~\ref{sec:ranktwo}]\label{thm:intro-rank2}
Let $\A_2=[2:\omega]$ be the rank-$2$ tree in which, among the level-one nodes, every finite degree occurs infinitely often and infinite degree occurs infinitely often. Then:
\begin{enumerate}
\item the orbit $O_\omega$ of level-one nodes of infinite degree is $\Pin{2}$-definable but not $\Sin{2}$-definable;
\item $\A_2$ has a $\Pin{4}$ Scott sentence but no $\Pin{3}$, $\Sin{3}$, or $\dSin{3}$ Scott sentence; hence
\[
\SSC(\A_2)=\Pin{4}, \qquad \SR(\A_2)=\pSR(\A_2)=3;
\]
\item the canonical computable copy of $\A_2$ has a c.e.\ Scott family of $\Sc{3}$ formulas and therefore a computable $\Pc{4}$ Scott sentence; the computable Scott sentence complexity of $\A_2$ is also exactly $\Pc{4}$.
\end{enumerate}
\end{theorem}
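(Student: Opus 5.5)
The plan is to handle part (1) by an explicit back-and-forth comparison, get the $\Pin{4}$ upper bound by writing the sentence down, and get all lower bounds in (2) from (1) via Montalb\'an's characterizations. Throughout, $\A_2$ is a root $r$, level-one nodes, and leaves, and $\dg(a)$ is the number of children of a level-one node $a$.

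For (1), $O_\omega$ is defined by $\bigwedge_{n}\exists y_1\cdots y_n\,(\bigwedge_{i\neq j}y_i\neq y_j\wedge\bigwedge_i \text{``}y_i\text{ is a child of }x\text{''})$, which is a conjunction of $\Sin{1}$ formulas and hence $\Pin{2}$. To show $O_\omega$ is not $\Sin{2}$-definable, I will show that for $a\in O_\omega$ and a suitable level-one node $b$ of finite degree, every $\Sin{2}$ formula true of $a$ is true of $b$; since $b\notin O_\omega$, this suffices.

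1. Write a $\Sin{2}$ formula as a disjunction of formulas $\exists\bar y\,\psi(x,\bar y)$ with $\psi$ a conjunction of $\Pin{1}$ formulas.
2. Given a witness $\bar y$ for $a$, choose a level-one node $b$ outside $\bar y$ whose degree $k$ exceeds $|\bar y|$. None of $b$'s children lies in $\bar y$, since each child of $b$ has $b$ as its only parent.
3. Define $\bar y'$ from $\bar y$ as follows. The children of $a$ occurring in $\bar y$ go to distinct children of $b$. If $\bar y$ contains $b$, it goes to a fresh level-one node of degree $\dg(b)$. Every other coordinate stays put.
4. Check that $(\A_2,b,\bar y')\leq_1(\A_2,a,\bar y)$, i.e.\ every $\Sin{1}$ fact about $(b,\bar y')$ holds of $(a,\bar y)$. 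It suffices to embed any finite substructure containing $b,\bar y'$ into $\A_2$, sending $(b,\bar y')\mapsto(a,\bar y)$. This works because $a$ has infinitely many children, so extra children of $b$ can go to unused children of $a$. Other level-one nodes go to fresh level-one nodes of infinite degree, which exist because there are infinitely many.
5. Conclude that every $\Pin{1}$ conjunct of $\psi$ transfers from $(a,\bar y)$ to $(b,\bar y')$.

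For the upper bound in (2), note that ``$x$ has exactly $n$ children'' is $\Sin{2}$ and $O_\omega$ is $\Pin{2}$. Hence the orbit of every finite tuple is $\Sin{3}$-definable: it is described by the levels of its coordinates, the parent relations among them, and the degree formula for each level-one coordinate. By Montalb\'an's theorem, stated in the introduction, $\A_2$ has a $\Pin{4}$ Scott sentence. Concretely it is the conjunction of the following:
\begin{itemize}
\item the $\Pin{1}$ axioms for a rank-$2$ tree;
\item for each $n$, $\forall\bar x\,\exists y\,(y\notin\bar x\wedge \dg(y)=n)$, which is $\Pin{3}$;
\item $\forall\bar x\,\exists y\,(y\notin\bar x\wedge y\in O_\omega)$, which is $\Pin{4}$.
\end{itemize}
Any countable model of this conjunction is isomorphic to $\A_2$ by counting level-one nodes of each degree.

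For the lower bounds in (2):
\begin{itemize}
\item No $\Pin{3}$ sentence exists, by (1) and the same equivalence with $\alpha=2$.
\item For $\Sin{3}$ and $\dSin{3}$, I will use the parameterized version (AGHTT). A $\Sin{3}$ or $\dSin{3}$ Scott sentence yields a tuple $\bar p$ over which all orbits are $\Sin{2}$-definable. This is refuted by rerunning the argument of (1) with $\bar p$ added to the parameters: choose both $a\in O_\omega$ and the finite-degree $b$ (together with its replacement node and the fresh nodes in step 3) outside $\bar p$ and the witnesses. Infinitely many nodes of every degree remain available after naming finitely many.
\end{itemize}
This gives $\pSR(\A_2)=\SR(\A_2)=3$ and $\SSC(\A_2)=\Pin{4}$.

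For (3), in the canonical computable copy the formulas above are uniformly computable $\Sc{2}$ and $\Pc{2}$ formulas. The tuple-orbit formulas therefore form a c.e.\ Scott family of $\Sc{3}$ formulas, and the sentence above is a computable $\Pc{4}$ sentence. Its lower bound follows from the boldface one.

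The main obstacle is step 4 of part (1): making $\bar y'$ well defined when $\bar y$ mentions $b$ or its neighbourhood, and making the embedding respect inequalities with the witnesses. The parameterized rerun for $\Sin{3}$ and $\dSin{3}$ inherits exactly this bookkeeping.
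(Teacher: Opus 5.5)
Your architecture is sound, and part (1) is the paper's own argument. Theorem~\ref{thm:notSigma2} is proved exactly by transferring the $\Pin{1}$ matrix from $(u,\bar a)$ to $(v,h(\bar a))$ with $v$ a fresh finite-degree node: every finite configuration is pulled back onto $u$ via the Extension Lemma~\ref{lem:ext}, using $\dg(u)=\omega$. Several details you wrote are wrong as stated, however.

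\textbf{Orbit description.} This is the substantive error. Your description (levels, parent relations among the coordinates, degrees of level-one coordinates) omits the unnamed-parent data (D3) of Lemma~\ref{lem:datum}. A leaf whose parent has degree $1$ and a leaf whose parent has degree $\omega$ satisfy the same such formula, yet lie in different orbits ($C_1$ versus $C_\omega$). The same happens for two leaves sharing an unnamed parent versus two with distinct parents. So the family you offer in (3) is not a Scott family. You need the $\mathrm{samepar}$ and $\mathrm{pardeg}_{=k}$, $\mathrm{pardeg}_{=\omega}$ clauses (equivalently, existentially quantify the missing parents). This keeps every formula $\Sin{3}$, since $\mathrm{pardeg}_{=\omega}$ occurs only positively.

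\textbf{The explicit sentence.} Your explicit $\Pin{4}$ sentence is correct. It independently gives the upper bound and the computable $\Pc{4}$ sentence, directly and without \cite[Prop.~2.9]{AKM20}. This requires the tree axioms to include rootedness, which is not $\Pin{1}$-expressible: two disjoint copies of $\A_2$ satisfy all your other conjuncts. Adding $\exists r\,\forall y\,(r\preceq y)$, which is $\Sigma_2$, is harmless.

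\textbf{Smaller bookkeeping points.} The formula for $O_\omega$ needs a level-one conjunct, since the root also has infinitely many successors. In step~2, $b\notin\bar y$ does not keep $b$'s children out of $\bar y$. You must choose $b$ not the parent of any entry of $\bar y$ (or of $\bar p$), and send any occurrence of $a$ in $\bar y$ to $b$. In step~4, the parents of the unmoved level-two entries of $\bar y'$ must go to themselves, not to fresh infinite-degree nodes. These are exactly hypothesis (i) of Lemma~\ref{lem:ext} and the sets $P$ in the paper's proofs.

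\textbf{Comparison of routes for the lower bounds in (2).} Here your route genuinely differs. The paper realizes $B_j=[2:j]$ inside $\A_2$ and shows (Lemma~\ref{lem:transfers}):
\begin{itemize}
\item every $\Pin{3}$ sentence true in $\A_2$ holds in every $B_j$;
\item every $\Sin{3}$ sentence true in $\A_2$ holds in all large $B_j$.
\end{itemize}
This excludes $\Pin{3}$, $\Sin{3}$ and $\dSin{3}$. Theorem~\ref{thm:miller} then excludes $\Sin{4}$, and $\pSR$ is read off \cite[Table~1]{GR23}.

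You instead get ``no $\Pin{3}$'' from (1) plus Theorem~\ref{thm:montalban}. You then prove $\pSR(\A_2)\geq 3$ directly by rerunning (1) over an arbitrary parameter tuple. Montalb\'an's parameterized characterization (a $\Sin{\alpha+2}$ Scott sentence exists iff some $(\A,\bar p)$ has a $\Pin{\alpha+1}$ one), together with $\Sin{3}\subseteq\dSin{3}\subseteq\Sin{4}$, excludes all three classes at once, with no appeal to Miller's theorem.

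This is correct, shorter, and runs on a single lemma. It is in effect the computability-free rank-$2$ version of the parameter-reservation argument of Section~\ref{sec:general}. The paper's route buys explicit non-isomorphic models $B_j$ satisfying the relevant fragments of the theory of $\A_2$; that is the information behind the cautionary computation in Question~\ref{q:gaps}. It also avoids the parameterized form of Montalb\'an's theorem.
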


Theorem~\ref{thm:intro-rank2} aligns perfectly with the lightface picture of \cite{Mah19} at rank $2$: strong degree of categoricity $\mathbf{0}''$ corresponds to boldface $\Delta^0_3$-categoricity and a $\Pin{4}$ Scott sentence; the failure one level down is witnessed in lightface currency by the non-existence of $\mathbf{0}'$-computable isomorphisms and in boldface currency by the non-$\Sin{2}$-definability of the coding orbit $O_\omega$ --- the internal twin of the $\Pi^0_2$-completeness of $\mathrm{Inf}$. We prove that the same exact alignment persists at every finite rank:

\begin{theorem}[Theorem~\ref{thm:main-general}]\label{thm:intro-main}
$\SSC(\A_{m+1})=\Pin{2m+2}$ for every finite $m\geq 1$.
\end{theorem}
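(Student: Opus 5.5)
The plan is to start from Theorem~\ref{thm:intro-general}(3), which leaves three candidates, $\Sin{2m+1}$, $\dSin{2m+1}$ and $\Pin{2m+2}$, and to rule out the first two using the known connection between Scott sentence complexity and parameterized Scott rank (Montalb\'an; Alvir--Greenberg--Harrison-Trainor--Turetsky). Specifically, if $\A$ has a $\Sin{\alpha+1}$ or $\dSin{\alpha+1}$ Scott sentence, then some finite tuple $\bar c$ makes $(\A,\bar c)$ have a $\Pin{\alpha+1}$ Scott sentence. Equivalently, every orbit of $(\A,\bar c)$ is $\Sin{\alpha}$-definable, so $\pSR(\A)\le\alpha$. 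For $\alpha=2m$, it therefore suffices to prove $\pSR(\A_{m+1})=2m+1$. That is, for every finite tuple $\bar c$ of $\A_{m+1}$, the expanded structure $(\A_{m+1},\bar c)$ has some orbit that is not $\Sin{2m}$-definable, or equivalently has no $\Pin{2m+1}$ Scott sentence. The upper bound $\pSR\le\SR=2m+1$ comes for free from Theorem~\ref{thm:intro-general}(2).

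The lower bound will go through the transfer principle, Proposition~\ref{prop:intro-transfer}, applied with $n=2m$ to the structure $(\A_{m+1},\bar c)$. Naming constants keeps the language relational up to trivial recoding, since constants can be replaced by unary predicates. For this I need, for every oracle $Z$, $Z$-computable copies of $(\A_{m+1},\bar c)$ such that every isomorphism between them computes $Z^{(2m)}$. The key observation is the parameter-reservation step. The tuple $\bar c$ meets only finitely many top-level subtrees, namely those rooted at the level-one ancestors of the entries of $\bar c$. Let $R$ be the union of these finitely many reserved subtrees together with the root.

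The complement of $R$ consists of infinitely many top-level subtrees realizing each type in the multiset defining $[m+1:\omega]$. Each type in that multiset occurs infinitely often, so removing finitely many subtrees leaves an isomorphic copy of $\A_{m+1}$ minus the root. I will then build the two copies $\T'_Z,\Scal'_Z$ as follows. Place a fixed computable presentation of $R$ with $\bar c$ named, identical in both copies. Then attach the relativized coding copies $\T_Z,\Scal_Z$ from Theorem~\ref{thm:intro-general}(1), restricted to their top-level subtrees, onto the root.

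The main obstacle is ensuring that isomorphisms of the expanded structures still compute $Z^{(2m)}$. An isomorphism $f$ fixes $\bar c$ but could, a priori, swap a reserved subtree with a spare one. Since every isomorphism type of subtree appears infinitely often, I would handle this by checking that the coding argument from \cite{Mah19} is local. It decodes $Z^{(2m)}$ by querying which subtree an image lands in and its type, at all but finitely many coding locations. An isomorphism that permutes finitely many subtrees, or sends some coding subtrees into $R$, can be corrected by a finite amount of information hardcoded into the reduction.

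More concretely, I would first try the following reduction. Given $f\colon\T'_Z\to\Scal'_Z$, it is enough to obtain an isomorphism between the spare parts that is computable from $f$ and a finite parameter. Note, however, that $f$ need not map spare parts to spare parts. To deal with this, I would re-examine the representation lemmas of \cite{Mah19} for oracle $U$ to see that the reduction $f\mapsto Z^{(2m)}$ only uses $f$ on cofinitely many coding subtrees, and that it tolerates finitely many errors. If that tolerance is not directly available, the fallback is to make $R$ part of both copies in the same way and reserve its subtrees as in the uniform construction. That is, I would rerun the construction of \cite{Mah19} relative to $Z$ with finitely many extra requirements frozen, and argue that the coding still forces $Z^{(2m)}$.

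With this established, the transfer principle gives $\pSR(\A_{m+1})\ge 2m+1$. This eliminates $\Sin{2m+1}$ and $\dSin{2m+1}$, leaving $\SSC(\A_{m+1})=\Pin{2m+2}$.
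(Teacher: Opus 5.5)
This is correct and is essentially the paper's proof: the trichotomy plus the $\SSC$--$\pSR$ dictionary of \cite[Table~1]{GR23} reduces the theorem to $\pSR(\A_{m+1})=2m+1$. That value is obtained by reserving the finitely many top-level subtrees meeting $\bar c$, running the relativized coding of Theorem~\ref{thm:relhard} on the spare subtrees, and applying the transfer principle to the expansion $(\A_{m+1},\bar c)$. The step you flag as the main obstacle is immediate, and your remark that $f$ ``need not map spare parts to spare parts'' is false: each reserved subtree contains an entry of $\bar c$, and $f$ fixes $\bar c$ and preserves $\preceq$, so $f$ maps reserved top-level roots onto reserved ones and every coding root to an ordinary root, where the parity marking recovers $Z^{(2m)}$ directly (your finite-error workaround would also work, since the decoding is pointwise and injectivity bounds the exceptions, but neither it nor a re-examination of \cite{Mah19} is needed).
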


The proof, in Section~\ref{sec:general}, does not run the rank-$2$ back-and-forth engine. It instead shows that the relativized coding of Section~\ref{sec:relhard} survives naming any finite tuple of parameters: one reserves the finitely many top-level subtrees that meet the tuple and runs the coding on the infinitely many spare subtrees, so that $\SR(\A_{m+1},\bar a)=2m+1$ for \emph{every} finite $\bar a$ and hence $\pSR(\A_{m+1})=2m+1$, which the $\SSC$--$\pSR$ dictionary of \cite[Table~1]{GR23} converts to $\SSC(\A_{m+1})=\Pin{2m+2}$. Section~\ref{sec:questions} records a separate, more constructive route --- a single back-and-forth substitution lemma (Conjecture~\ref{conj:subst}), whose rank-$2$ instance is exactly the engine of Theorem~\ref{thm:intro-rank2} --- which would re-prove Theorem~\ref{thm:main-general} without computability and localize the critical orbit at $O^{m}_\omega$.

\subsection*{Context and motivation}
Gonzalez and Rossegger \cite{GR23} carried out a comprehensive analysis of the Scott sentence complexities realized by linear orders. The class of trees is a natural next target: like linear orders, trees satisfy Vaught's conjecture (Steel) and are Borel complete but not faithfully Borel complete, and Harrison-Trainor and Kim \cite{HTK26} recently proved that Scott spectral gaps for trees are bounded: every $\Pi_\alpha$-axiomatizable class of trees has a member of Scott rank at most $\alpha+2$, with a lower-bound example at the bottom level ($\Pi_2$ theories all of whose models have Scott rank at least $3$). The trees $\A_{m+1}$, and the calibrated family of types $[k:N]$ underlying them, provide computably presented witnesses pinning the $\Pi$-column of a prospective tree analogue of the Gonzalez--Rossegger table at the even successor levels, with c.e.\ Scott families and exact computable Scott sentences --- the tree counterpart of the effectivity remarks in \cite{GR23}. They also give, for each finite $m$, structures in which a single distinguished orbit concentrates a $\Pin{2m}\setminus\Sin{2m}$ ambiguity; iterating this ambiguity through every level of a model is, we expect, what sharp lower bounds for the Harrison-Trainor--Kim gap must look like (see Section~\ref{sec:questions}). Finally, the relativization-plus-transfer method of Sections~\ref{sec:transfer}--\ref{sec:relhard} is general: any degree-of-categoricity construction that is uniform in an oracle yields boldface non-definability for its isomorphism type, and we state the principle so that it can be quoted.

\subsection*{Organization} Section~\ref{sec:prelim} fixes notation and recalls the Scott analysis toolkit. Section~\ref{sec:transfer} proves the transfer principle. Section~\ref{sec:relhard} relativizes the construction of \cite{Mah19}. Section~\ref{sec:SR} computes $\SR(\A_{m+1})$ and narrows $\SSC(\A_{m+1})$ to three candidates. Section~\ref{sec:ranktwo} contains the complete rank-$2$ analysis. Section~\ref{sec:general} proves $\SSC(\A_{m+1})=\Pin{2m+2}$ in general by parameter reservation. Section~\ref{sec:questions} discusses the substitution-lemma program and collects the open questions.

\section{Preliminaries}\label{sec:prelim}

\subsection{Trees and types}
We follow \cite{Mah19}. A \emph{tree} is a structure $(T,\prec)$ in the relational language $\{\prec\}$ such that $\prec$ is a strict partial order, the set of $\prec$-predecessors of every element is finite and linearly (hence well) ordered, and there is a $\prec$-least element, the \emph{root} $\rho$. All trees in this paper have no infinite paths. $\Lev_T(x)=|\{y: y\prec x\}|$; \emph{rank} is defined by assigning terminal nodes rank $0$ and letting $\mathrm{rk}(x)=\sup\{\mathrm{rk}(y)+1: y \text{ an immediate successor of } x\}$; the rank of $T$ is the rank of $\rho$. For $u\in T$, $T[u]$ is the subtree rooted at $u$, and $\ch(u)$ is the set of immediate successors (\emph{children}) of $u$. In a tree of rank at most $2$, the successors of a level-one node are exactly its children; we write $\dg(u)=|\ch(u)|\in\omega\cup\{\omega\}$ for the \emph{degree} of a level-one node $u$.

A \emph{computable tree} is one whose universe and order are computable; as in \cite{Mah19}, all trees constructed below also have computable successor relation, and the results are insensitive to this distinction. We work in the language $\{\prec\}$ throughout; transfers to the root-and-parent language of \cite{HTK26} are routine for trees of bounded rank but are not carried out here.

\begin{definition}[\cite{Mah19}]\label{def:types}
For finite $n\geq 1$ and $N\in\omega\cup\{\omega\}$, the tree type $[n:N]$ is defined inductively: $[1:N]$ is the tree of rank $\leq 1$ with exactly $N$ children of the root; $[n:N]$ is the tree of rank $n$ in which exactly $N$ of the level-one nodes are roots of subtrees of type $[n-1:\omega]$ and, for every $k\in\omega$ (including $k=0$), infinitely many level-one nodes are roots of subtrees of type $[n-1:k]$.
\end{definition}

We set $\A_{m+1}:=[m+1:\omega]$ for finite $m\geq 1$. Thus $\A_2$ is the rank-$2$ tree whose level-one nodes realize every degree in $\omega\cup\{\omega\}$, each infinitely often. The two computable copies constructed in \cite{Mah19} at rank $m+1$ (Proposition~2.3 there for $m=1$; Theorem~3.3 in general) are both of type $[m+1:\omega]$: in the coding copy the even level-one nodes $(2x)$ root subtrees of type $[m:\omega]$ exactly when $x\notin\emptyset^{(2m)}$, and $\emptyset^{(2m)}$ is coinfinite, while the odd nodes supply each finite type infinitely often.

\subsection{Infinitary logic and Scott analysis}
The \emph{Scott sentence complexity} $\SSC(\A)$ is the least of the complexities $\Sin{\alpha},\Pin{\alpha},\dSin{\alpha}$ (under the natural inclusion ordering, in which $\Sin{\alpha},\Pin{\alpha}<\dSin{\alpha}<\Sin{\alpha+1},\Pin{\alpha+1}$) of a Scott sentence of $\A$; this is well defined by \cite{AGHTT21}. The (parameterless) \emph{Scott rank} $\SR(\A)$ is the least $\alpha$ such that every automorphism orbit of every finite tuple of $\A$ is $\Sin{\alpha}$-definable without parameters; the parameterized rank $\pSR(\A)$ is the least $\SR((\A,\bar p))$ over finite parameter tuples $\bar p$. We use freely the relations between $\SSC$, $\SR$, $\pSR$ tabulated in \cite[Table~1]{GR23} (from Montalb\'an's book): in particular $\SSC(\A)=\Pin{\alpha+1}$ implies $\pSR(\A)=\SR(\A)=\alpha$.

The fundamental correspondence is:

\begin{theorem}[Montalb\'an \cite{Mon15}]\label{thm:montalban}
Let $\A$ be a countable structure and $\alpha$ a countable ordinal. The following are equivalent:
\begin{enumerate}
\item $\A$ has a $\Pin{\alpha+1}$ Scott sentence;
\item every automorphism orbit of every finite tuple of $\A$ is $\Sin{\alpha}$-definable without parameters, i.e.\ $\A$ has a parameterless Scott family of $\Sin{\alpha}$ formulas;
\item the set $\{\B\in\Mod(L): \B\cong\A\}$ is $\bPi{\alpha+1}$;
\item $\A$ is \emph{uniformly boldface} $\mathbf{\Delta}^0_\alpha$-categorical.
\end{enumerate}
Consequently $\SR(\A)$ equals the least $\alpha$ such that $\A$ has a $\Pin{\alpha+1}$ Scott sentence.
\end{theorem}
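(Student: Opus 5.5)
The plan is to prove the equivalences in the cycle (2)$\Rightarrow$(1)$\Rightarrow$(3)$\Rightarrow$(2), and separately (2)$\Leftrightarrow$(4). The final ``consequently'' clause is then immediate from the definition of $\SR$ as the least $\alpha$ for which (2) holds.

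For (2)$\Rightarrow$(1), I would take the parameterless $\Sin{\alpha}$ Scott family $\{\varphi_{\bar a}\}$, one formula for each orbit of each finite tuple, and write down the standard back-and-forth sentence. It is the conjunction of three parts:
\[
\varphi_{\emptyset},\qquad
\bigwedge_{n}\forall \bar x\,\bigvee_{|\bar a|=n}\varphi_{\bar a}(\bar x),\qquad
\bigwedge_{\bar a}\forall\bar x\Big(\varphi_{\bar a}(\bar x)\to \bigwedge_{c}\exists y\,\varphi_{\bar a c}(\bar x,y)\wedge\forall y\bigvee_{c}\varphi_{\bar a c}(\bar x,y)\Big).
\]
Each conjunct is $\Pin{\alpha+1}$, because it is a universal block over a Boolean combination whose implication antecedent is $\Sin{\alpha}$ and whose consequent is $\Sin{\alpha+1}$ inside $\forall$. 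A back-and-forth argument between any countable model and $\A$ shows that the sentence characterizes $\A$.

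For (1)$\Rightarrow$(3), I would apply the easy direction of the boldface Lopez-Escobar theorem: a $\Pin{\alpha+1}$ sentence defines a $\bPi{\alpha+1}$ subset of $\Mod(L)$, by induction on formulas. The direction (3)$\Rightarrow$(2) is the main obstacle. I would argue by contraposition. Suppose some orbit of a tuple $\bar a$ is not $\Sin{\alpha}$-definable. Then there is $\bar b$ outside that orbit with $\bar a\leq_\alpha\bar b$, where $\leq_\alpha$ is the back-and-forth relation, meaning every $\Pin{\alpha}$ formula true of $\bar a$ is true of $\bar b$. The key step is to show the isomorphism class is not $\bPi{\alpha+1}$. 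For this I would use Vaught's invariant version of Lopez-Escobar: an invariant $\bPi{\alpha+1}$ class is defined by a $\Pin{\alpha+1}$ sentence $\psi=\bigwedge_i\forall\bar x\,\theta_i(\bar x)$ with $\theta_i\in\Sin{\alpha}$. I would then build a countable $\B\models\psi$ with $\B\not\cong\A$. To do this, I would iteratively replace $\bar a$ by $\bar b$ along a back-and-forth construction, using $\bar a\leq_\alpha\bar b$ to preserve every $\Sin{\alpha}$ fact needed at each stage. The point to guard against is that the construction might accidentally rebuild $\A$. I expect this to be the hard part, and I would follow Montalbán's argument via $\alpha$-free tuples: if every tuple has an $\alpha$-free extension failure, then $\Sin{\alpha}$-definability holds. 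Otherwise a generic-structure (Baire category) argument on the Polish space of copies extending an $\alpha$-free tuple shows the isomorphism class is not $\bPi{\alpha+1}$.

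For (2)$\Rightarrow$(4), I would fix a real $X$ coding the Scott family. Given copies $\B,\Ccal$, I would build an isomorphism by back-and-forth, searching with oracle $(X\oplus\B\oplus\Ccal)^{(\alpha)}$, which decides $\Sin{\alpha}$ formulas, for matching extensions. This is uniform in the copies. For (4)$\Rightarrow$(2), I would relativize the Ash--Knight--Manasse--Slaman/Chisholm theorem. Uniform boldface $\mathbf{\Delta}^0_\alpha$-categoricity relative to some $X$ gives relative $\Delta^0_\alpha$-categoricity over $X$. Their forcing argument then yields an $X$-c.e. Scott family of $\Sin{\alpha}$ formulas; the formulas are a priori with parameters, but uniformity across all copies allows the parameters to be removed, giving a parameterless family.
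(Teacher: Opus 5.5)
The paper does not prove this theorem; it imports it from Montalb\'an. Your outline therefore has to stand on its own, and as written it does not: one step is off by a level, one inference is false, and the hard direction is only named.

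\emph{(2)$\Rightarrow$(4) is off by one.} An oracle that decides $\Sin{\alpha}$ truth, such as $(X\oplus\B\oplus\Ccal)^{(\alpha)}$, yields only a $\Delta^0_{\alpha+1}$ isomorphism. For finite $\alpha$, $\mathbf{\Delta}^0_\alpha$-categoricity requires computability in the $(\alpha-1)$-st jump; at $\alpha=1$ that is relative computability, not the jump. The fix is never to decide $\Sin{\alpha}$ truth. Keep the invariant that $\bar a\mapsto\bar b$ sends tuples to corresponding orbits. To add $a$, search for an index $k$, disjuncts $i,i'$, a candidate $b$ and witnesses $\bar w,\bar w'$ with $\B\models\psi_{k,i}(\bar a a,\bar w)$ and $\Ccal\models\psi_{k,i'}(\bar b b,\bar w')$, where the $\psi$'s are the $\Pin{<\alpha}$ matrices of the family. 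Each test costs only the $(\alpha-1)$-st jump, and the search halts because every orbit has a formula in the family. This is exactly the Claim in the proof of Proposition~\ref{prop:transfer}, whose closing remark concerns precisely this off-by-one.

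\emph{The hard direction is missing, and its first step is false.} By Vaught, (3)$\Rightarrow$(1) is immediate, so the real content is (1)$\Rightarrow$(2).

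You claim that a non-$\Sin{\alpha}$-definable orbit of $\bar a$ yields a single $\bar b$ outside it with $\bar a\leq_\alpha\bar b$. The paper's own example refutes this. In $\A_2$ with $\alpha=2$, $O_\omega$ is not $\Sin{2}$-definable (Theorem~\ref{thm:notSigma2}). Yet it is defined by the $\Pin{2}$ formula $\mathrm{lev}_1\wedge\dg_{=\omega}$, so no node outside $O_\omega$ satisfies all $\Pin{2}$ formulas true of $u\in O_\omega$. Each such node also fails some $\Sigma_1$ formula true of $u$, so the other orientation fails too.

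What non-definability actually gives is non-uniform: for every $\beta<\alpha$ and every $\bar c$ there are $\bar b,\bar d$ with $\bar a\bar c\leq_\beta\bar b\bar d$ and $\bar b$ outside the orbit, with $\bar b$ depending on $(\beta,\bar c)$. In the example, $v$ must have finite degree at least the number of children of $u$ listed in $\bar c$. This is the phenomenon Montalb\'an's $\alpha$-free tuples capture. Turning it into a countable model of the Scott sentence not isomorphic to $\A$, or into a failure of uniform $\mathbf{\Delta}^0_\alpha$-categoricity, requires a genuine construction, and that construction is the real content of the theorem. You only name it. The companion lemma you quote is also garbled; the correct form is: if no tuple is $\alpha$-free, then every orbit is $\Sin{\alpha}$-definable.

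Similarly, in (4)$\Rightarrow$(2) the relativized Ash--Knight--Manasse--Slaman/Chisholm argument gives a Scott family over a parameter tuple $\bar p$. Removing $\bar p$ amounts to showing that the orbit of $\bar p$ is parameter-free $\Sin{\alpha}$-definable, which is an instance of (2) itself. ``Uniformity lets the parameters be removed'' is exactly where the hypothesis must do its work, and you give no mechanism.

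Two minor points in (2)$\Rightarrow$(1): the consequent is $\Pin{\alpha+1}$, not $\Sin{\alpha+1}$, and each $\varphi_{\bar a}$ should include the atomic diagram of $\bar a$ so that the back-and-forth into an arbitrary countable model preserves atomic facts.
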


In (3) and (4) the pointclasses are boldface: a real parameter is allowed. For finite $\alpha$, boldface $\mathbf{\Delta}^0_\alpha$-categoricity means that for all copies $X,Y$ of $\A$ there is an isomorphism computable in $(X\oplus Y\oplus p)^{(\alpha-1)}$ for some fixed real $p$. This parameter is not decorative; it is the reason Section~\ref{sec:transfer} is needed (Remark~\ref{rem:gap}).

We also use the following theorem of A.~Miller \cite{Mil83}; the short proof we follow is due to Alvir \cite[Theorem~2.5]{AGHTT21}.

\begin{theorem}[A.~Miller; see {\cite[Theorem~2.5]{AGHTT21}}]\label{thm:miller}
Let $\A$ be a countable structure and $\alpha$ a countable ordinal. If $\A$ has both a $\Sin{\alpha}$ Scott sentence and a $\Pin{\alpha}$ Scott sentence, then $\A$ has a $\dSin{\beta}$ Scott sentence for some $\beta<\alpha$. Moreover, if $\alpha=\gamma+1$ is a successor, then $\A$ has a $\dSin{\gamma}$ Scott sentence.
\end{theorem}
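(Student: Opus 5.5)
The plan is a Baire-category argument on the logic action of $S_\infty$, in the spirit of Vaught transforms. The syntactic proof of Alvir is the fallback.

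\emph{Step 1 (topological form).} Let $X=[\A]=\{\B\in\Mod(L):\B\cong\A\}$. By Lopez-Escobar, a $\Sin{\alpha}$ Scott sentence makes $X$ invariant $\bSig{\alpha}$, and a $\Pin{\alpha}$ one makes it invariant $\bPi{\alpha}$. So $X$ is an invariant $\boldsymbol{\Delta}^0_\alpha$ set which is a single orbit of the logic action. We must show $X$ is the intersection of an invariant $\bSig{\beta}$ set and an invariant $\bPi{\beta}$ set for some $\beta<\alpha$, with $\beta=\gamma$ when $\alpha=\gamma+1$. Lopez-Escobar then converts this back into a $\dSin{\beta}$ sentence.

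\emph{Step 2 (extracting a lower-rank witness from the $\Sigma$ side).} Write the $\Sin{\alpha}$ sentence as $\bigvee_i\exists\bar x\,\theta_i(\bar x)$ with each $\theta_i\in\Pin{\beta_i}$ and $\beta_i<\alpha$. Since $\A$ satisfies it, fix $i$ and $\bar a$ with $\A\models\theta_i(\bar a)$. Put $\beta=\beta_i$, or $\beta=\gamma$ in the successor case, where every $\beta_i\le\gamma$.

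The sentence $\sigma:=\exists\bar x\,\theta_i(\bar x)$ is $\Sin{\beta+1}$. It implies the full Scott sentence, so its models are exactly the copies of $\A$. Thus $\A$ already has a $\Sin{\beta+1}$ Scott sentence.

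\emph{Step 3 (bringing in the $\Pi$ side).} The aim is to cut this down to $\dSin{\beta}$ using the $\Pin{\alpha}$ sentence. Consider the Polish topology $\tau_\beta$ on $\Mod(L)$ generated by $\Sin{\beta}$-definable sets with parameters from $\omega$. Apply the Becker--Kechris/Vaught transform theory to $X$, which is $\tau_\beta$-$\boldsymbol{\Delta}$ at the next level. The orbit $X$ is then $\tau_\beta$-$G_\delta$, so by Effros' theorem it is comeager in its $\tau_\beta$-closure, and hence $\tau_\beta$-open in that closure (a Polish-group orbit that is nonmeager in its closure is open there). Unwinding the definitions:
\begin{itemize}
\item the closure is an invariant $\Pin{\beta}$-definable class, given by the $\Pin{\beta}$ theory of $\A$, made countable by the usual Scott-family argument;
\item relative openness gives a $\Sin{\beta}$ sentence cutting $X$ out of the closure.
\end{itemize}
Their conjunction is the desired $\dSin{\beta}$ Scott sentence.

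\emph{Main obstacle.} The hard step is Step 3, specifically justifying that $X$ is $\tau_\beta$-$G_\delta$ from the combined hypotheses. Only the $\Pin{\alpha}$ sentence lowers the $\Pi$-side, and only the $\Sin{\alpha}$ sentence gives the $\Sin{\beta+1}$ bound.

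I would first try Alvir's direct route. Use the $\Pin{\alpha}$ sentence $\bigwedge_j\forall\bar y\,\chi_j(\bar y)$ with $\chi_j\in\Sin{<\alpha}$ to show that $\B\models\theta_i(\bar b)$ together with the $\Pin{\beta}$ theory of $\A$ forces $(\B,\bar b)\equiv_{\beta}(\A,\bar a)$. Then run back-and-forth to obtain $\B\cong\A$.

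In the limit case I would use the fact that only countably many $\chi_j$ occur, each of rank below some $\beta'<\alpha$, and enlarge $\beta$ to dominate the relevant ones.
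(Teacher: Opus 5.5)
\textbf{There is a genuine gap, in Step~3.} The parenthetical you use to get relative openness is false: a Polish-group orbit that is nonmeager in its closure need not be open there. The random graph $R$ with $\gamma=1$ is a counterexample inside your own setting. Its isomorphism class is $\tau_1$-$G_\delta$ (by the extension axioms) and dense in the space of graphs on $\omega$. It is not $\tau_1$-open there, because any $\Sin{1}$ condition with parameters true in a copy of $R$ is witnessed by a finite configuration, which can be padded with isolated vertices. Correspondingly, $R$ has a $\Pin{2}$ Scott sentence but no $\dSin{1}$ one. Step~3 as written uses only the $\Pin{\gamma+1}$ sentence, so it would prove that every $\Pin{\gamma+1}$ Scott sentence improves to a $\dSin{\gamma}$ one, which is false.

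You have also misplaced the difficulty. Take $\tau_\gamma$ to be generated by the $\Sin{\gamma}$ formulas of a countable fragment containing the subformulas of both sentences; using all $\Sin{\gamma}$ formulas would not give a Polish topology. Then $G_\delta$-ness of $X$ is immediate. What is missing is the $\Sigma$ side: $X=\bigcup_{\bar n}\{\B:\B\models\theta_i(\bar n)\}$ is also $\tau_\gamma$-$F_\sigma$. Since $X$ is comeager in the Polish space $\overline{X}$, Baire category gives a basic open $U$ with $\emptyset\neq U\cap\overline{X}\subseteq X$. As $X$ is a single orbit and $\overline{X}$ is invariant, $X=(S_\infty\cdot U)\cap\overline{X}$, and $S_\infty\cdot U$ is axiomatized by some $\exists\bar x\,\chi(\bar x)$ with $\chi\in\Sin{\gamma}$. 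With this repair your unwinding goes through and gives a correct topological proof.

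The rest needs adjusting too. In the limit case, and whenever $\beta_i<\gamma$, nothing beyond Step~2 is required. There $\exists\bar x\,\theta_i(\bar x)$ is already a Scott sentence in $\Sin{\beta_i+1}$ with $\beta_i+1<\alpha$, hence in $\dSin{\beta_i+1}$, and in $\dSin{\gamma}$ when $\alpha=\gamma+1$. Your plan to enlarge $\beta$ to dominate the $\chi_j$ could not work anyway, since countably many ranks below a limit can be cofinal in it.

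Your fallback also misreads the problem. Any $\B$ with $\B\models\theta_i(\bar b)$ is already a copy of $\A$, so no back-and-forth is needed. The only issue is the unbounded $\exists\bar x$ in front of the $\Pin{\gamma}$ formula $\theta_i$. The short syntactic proof the paper cites removes it directly. By Theorem~\ref{thm:montalban} ((1)$\Rightarrow$(2)), the $\Pin{\gamma+1}$ Scott sentence makes the orbit of $\bar a$ definable by some $\chi\in\Sin{\gamma}$. Then $\exists\bar x\,\chi(\bar x)\wedge\forall\bar x\,(\chi(\bar x)\to\theta_i(\bar x))$ is a $\dSin{\gamma}$ Scott sentence, the second conjunct being $\Pin{\gamma}$ because $\neg\chi\vee\theta_i\in\Pin{\gamma}$. $\A$ satisfies it because $\theta_i$ is automorphism-invariant. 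Conversely, any model contains some $\bar b$ with $\theta_i(\bar b)$, so it satisfies $\exists\bar x\,\theta_i(\bar x)$ and is a copy of $\A$. Compared with your repaired route, this replaces the Baire-category step by Montalb\'an's orbit-definability theorem.
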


Finally, we recall the evaluation bound for infinitary formulas in copies \cite[Ch.~7]{AK00}, relativized: if $\varphi$ is an $X_0$-computable $\Pin{\beta}$ formula ($\beta\geq 1$ finite) and $\B$ is a copy of a structure with universe $\omega$, then $\{\bar b: \B\models\varphi(\bar b)\}$ is $\Pi^0_\beta(\B\oplus X_0)$, uniformly; in particular it is decidable in $(\B\oplus X_0)^{(\beta)}$.

\section{A transfer principle: from relativized categoricity to non-definability}\label{sec:transfer}

\begin{remark}\label{rem:gap}
Before proving the principle we record why it is needed, i.e.\ why a lightface categoricity lower bound does not by itself defeat definability. Suppose every orbit of $\A$ is $\Sin{n}$-definable, by formulas jointly coded by a real $X_0$. The argument below produces, between any computable copies $\B,\Ccal$, an isomorphism computable in $X_0^{(n-1)}$. If $X_0\geq_T\emptyset^{(2m)}$ --- and nothing bounds the complexity of an abstract Scott family --- this is perfectly compatible with the statement ``every isomorphism between $\B$ and $\Ccal$ computes $\emptyset^{(2m)}$,'' for any $n$ whatsoever. A fixed pair of computable copies can never see a sufficiently non-arithmetic Scott family. What no countable family can survive is hardness that relativizes: the oracle in Proposition~\ref{prop:transfer} is chosen after the family.
\end{remark}

\begin{proposition}[Transfer principle]\label{prop:transfer}
Let $\A$ be a countable structure in a relational language and let $n\geq 1$ be finite. Suppose that for every $Z\subseteq\omega$ there exist $Z$-computable copies $\B_Z,\Ccal_Z$ of $\A$ such that no isomorphism $f\colon\B_Z\to\Ccal_Z$ satisfies $f\leT Z^{(n-1)}$. Then some automorphism orbit of a finite tuple of $\A$ is not $\Sin{n}$-definable. Consequently (by Theorem~\ref{thm:montalban}) $\A$ has no $\Pin{n+1}$ Scott sentence, and $\SR(\A)\geq n+1$.
\end{proposition}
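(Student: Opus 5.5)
We argue by contraposition. Suppose every automorphism orbit of every finite tuple of $\A$ is $\Sin{n}$-definable without parameters. Since $\A$ is countable, there are only countably many orbits, so we may fix a countable Scott family $\{\varphi_i(\bar x_i)\}_{i\in\omega}$ of $\Sin{n}$ formulas, each defining one orbit. Each $\Sin{n}$ formula is a countable disjunction of existentially quantified $\Pin{n-1}$ formulas, recursively built from finitary atomic formulas, so the whole family can be coded by a single real $X_0$. For this coding we use the fact that every $\Sin{n}$ formula is $X$-computable for some real $X$, and a countable join of such codes is again a real. We then set $Z:=X_0$ and take the $Z$-computable copies $\B_Z,\Ccal_Z$ supplied by the hypothesis. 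The goal is to build an isomorphism $f\colon\B_Z\to\Ccal_Z$ with $f\leT Z^{(n-1)}$, which contradicts the hypothesis.

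The construction is the standard back-and-forth through a Scott family. At stage $s$ we hold finite tuples $\bar b\in\B_Z$ and $\bar c\in\Ccal_Z$ of equal length together with an index $i$ such that $\B_Z\models\varphi_i(\bar b)$ and $\Ccal_Z\models\varphi_i(\bar c)$. Because $\varphi_i$ defines an orbit, it follows that $(\B_Z,\bar b)\cong(\Ccal_Z,\bar c)$. To go forth, we take the next element $b'$ and search for an index $j$ and an element $c'$ with $\B_Z\models\varphi_j(\bar b b')$ and $\Ccal_Z\models\varphi_j(\bar c c')$. Such a pair exists because $\bar b\mapsto\bar c$ extends to an isomorphism and the family covers every orbit, including that of $\bar b b'$. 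Going back is symmetric. The union of the partial maps is an isomorphism.

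The step that needs care is the complexity of this search. We write $\varphi_j=\bigvee_k\exists\bar y\,\psi_{j,k}(\bar x,\bar y)$, where each $\psi_{j,k}$ is an $X_0$-computable $\Pin{n-1}$ formula. By the relativized evaluation bound from Section~\ref{sec:prelim}, the satisfaction of $\psi_{j,k}$ in $\B_Z$ and in $\Ccal_Z$ is $\Pi^0_{n-1}(Z\oplus X_0)=\Pi^0_{n-1}(Z)$, and hence decidable in $Z^{(n-1)}$; for $n=1$ this means decidable in $Z$, since finitary quantifier-free formulas are computable in the copies. So with oracle $Z^{(n-1)}$ we search over all tuples $(j,k,\bar y,k',\bar y',c')$ and check $\psi_{j,k}(\bar b b',\bar y)$ in $\B_Z$ and $\psi_{j,k'}(\bar c c',\bar y')$ in $\Ccal_Z$. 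The search terminates by the existence argument above, and any witness it finds is valid. This puts $f\leT Z^{(n-1)}$, giving the contradiction.

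The main obstacle is making this evaluation uniform. Every formula in the family must be computable relative to the single real $X_0$, uniformly in the indices $(j,k)$, so that the relativized Ash--Knight bound applies uniformly. I would handle this by coding the family as an $X_0$-computable sequence of infinitary formula codes, with $X_0$ chosen to compute all of them at once. Once the orbits are shown not to be all $\Sin{n}$-definable, Theorem~\ref{thm:montalban} with $\alpha=n$ shows that $\A$ has no $\Pin{n+1}$ Scott sentence, and therefore $\SR(\A)\geq n+1$.
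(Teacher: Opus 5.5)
Your proposal is correct and follows the paper's proof essentially step for step. You code a parameterless $\Sin{n}$ Scott family by a real $X_0$, run a back-and-forth that only \emph{searches} for witnesses to the $\Pin{n-1}$ matrices (each check decidable in the $(n-1)$st jump) rather than deciding $\Sin{n}$ truth, and then take $Z:=X_0$ so that the resulting $Z^{(n-1)}$-computable isomorphism contradicts the hypothesis. The only difference is cosmetic: the paper first proves the back-and-forth claim for arbitrary copies with oracle $(\B\oplus\Ccal\oplus X_0)^{(n-1)}$ and then specializes to $\B_Z,\Ccal_Z$.
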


\begin{proof}
Suppose toward a contradiction that every orbit is $\Sin{n}$-definable, and choose for each orbit $O$ of a finite tuple a parameterless defining formula $\varphi_O\in\Sin{n}$, in normal form
\[
\varphi_O \;=\; \bigvee_{i\in\omega} \exists\bar y_i\, \psi_{O,i}(\bar x,\bar y_i), \qquad \psi_{O,i}\in\Pin{n-1}.
\]
There are countably many orbits; fix a single real $X_0$ coding an enumeration $(\varphi_k)_{k\in\omega}$ of the chosen formulas together with their syntax, so that each $\varphi_k$ is an $X_0$-computable $\Sigma_n$ formula and each matrix $\psi_{k,i}$ an $X_0$-computable $\Pi_{n-1}$ formula in the sense of \cite[Ch.~7]{AK00} relativized to $X_0$. Two properties of the list are used below: every $\varphi_k$ defines a single automorphism orbit of $\A$, and every orbit is defined by some $\varphi_k$.

\begin{claim}\label{claim:bf}
For any two copies $\B,\Ccal$ of $\A$ with universe $\omega$ there is an isomorphism $f\colon \B\to\Ccal$ with $f\leT (\B\oplus\Ccal\oplus X_0)^{(n-1)}$.
\end{claim}

\begin{proof}[Proof of Claim]
We build $f$ as the union of a chain of finite partial injections $\bar a\mapsto\bar b$ maintaining the invariant
\[
(\ast)\qquad \text{for all } k: \quad \B\models\varphi_k(\bar a) \iff \Ccal\models\varphi_k(\bar b).
\]
Since each $\varphi_k$ defines an orbit of $\A$, and definable sets are carried to each other by any isomorphisms $\B\cong\A\cong\Ccal$, $(\ast)$ says exactly that $\bar a$ and $\bar b$ lie in corresponding orbits; in particular tuples satisfying $(\ast)$ realize the same atomic formulas, so the union of the chain, if total and onto in both directions, is an isomorphism.

Forward step (the back step is symmetric). Given $\bar a\mapsto\bar b$ satisfying $(\ast)$ and a new element $a$ of $\B$, search over tuples $(k,i,\bar w,b,i',\bar w')$ for one with
\[
\B\models\psi_{k,i}(\bar a a,\bar w) \quad\text{and}\quad \Ccal\models\psi_{k,i'}(\bar b b,\bar w').
\]
Each condition is a $\Pi^0_{n-1}(\B\oplus\Ccal\oplus X_0)$ fact about an $X_0$-computable $\Pi_{n-1}$ formula, hence decidable in $(\B\oplus\Ccal\oplus X_0)^{(n-1)}$, uniformly. (For $n=1$ the matrices are quantifier-free conjunctions and the conditions are decidable in $\B\oplus\Ccal\oplus X_0$ directly; the count is the same.) If the search succeeds, then $\B\models\varphi_k(\bar a a)$ and $\Ccal\models\varphi_k(\bar b b)$; since $\varphi_k$ defines a single orbit, $\bar a a$ and $\bar b b$ lie in corresponding orbits, so $(\ast)$ holds of the extended map and we set $f(a)=b$.

The search terminates. Indeed, let $\varphi_k$ be the formula chosen for the orbit of (the image in $\A$ of) $\bar a a$; then $\B\models\varphi_k(\bar a a)$, providing $i$ and $\bar w$. For the $\Ccal$ side, since $\bar b$ corresponds to $\bar a$, there is an automorphism of $\A$ matching them up; composing, some $b$ in $\Ccal$ puts $\bar b b$ in the orbit corresponding to that of $\bar a a$, whence $\Ccal\models\varphi_k(\bar b b)$, providing $i'$ and $\bar w'$.

The whole construction is an unbounded search with a $(\B\oplus\Ccal\oplus X_0)^{(n-1)}$-decidable test at each step, terminating at every step; hence $f\leT(\B\oplus\Ccal\oplus X_0)^{(n-1)}$. Note carefully that we never decide $\Sin{n}$ truth (which would cost the $n$-th jump): to extend the map we only \emph{find witnesses}, verifying $\Pi_{n-1}$ matrices on candidates. This off-by-one is what makes the contradiction below possible.
\end{proof}

Now take $Z:=X_0$ and apply the hypothesis to obtain $Z$-computable copies $\B_Z,\Ccal_Z$ of $\A$ with no $Z^{(n-1)}$-computable isomorphism between them. By the Claim there is an isomorphism $f\leT(\B_Z\oplus\Ccal_Z\oplus X_0)^{(n-1)}\leT Z^{(n-1)}$, a contradiction.
\end{proof}

\begin{remark}
Proposition~\ref{prop:transfer} is stated for finite $n$ only; at infinite levels the correspondence between $\Delta^0_\alpha$ and iterated jumps shifts by one, exactly mirroring the successor/limit case split in \cite{Mah19}. We do not pursue infinite ranks here; see Section~\ref{sec:questions}.
\end{remark}

\section{Relativizing the categoricity construction}\label{sec:relhard}

\begin{theorem}\label{thm:relhard}
Let $m\geq 1$ be finite. For every $Z\subseteq\omega$ there exist trees $\T_Z$ and $\Scal_Z$ such that:
\begin{enumerate}
\item $\T_Z$ and $\Scal_Z$ are $Z$-computable (universe, order, and successor relation);
\item $\T_Z\cong\Scal_Z\cong\A_{m+1}$ --- the same isomorphism type for every $Z$;
\item every isomorphism $f\colon\T_Z\to\Scal_Z$ satisfies $Z^{(2m)}\leT f$.
\end{enumerate}
\end{theorem}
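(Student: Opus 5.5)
We will prove Theorem~\ref{thm:relhard} by relativizing the construction of \cite{Mah19} to an arbitrary oracle $Z$. The starting point is a uniform representation lemma proved there for an arbitrary oracle $U$. We apply it with $U=Z$ to the $\Pi^0_{2m}(Z)$ set $\overline{Z^{(2m)}}$, more precisely to its complement-indexed version. This gives a $Z$-computable sequence of trees $(T_x)_{x\in\omega}$, uniformly in $x$, such that $T_x\cong[m:\omega]$ if $x\notin Z^{(2m)}$ and $T_x\cong[m:k_x]$ for some finite $k_x$ otherwise. Alongside it we take a $Z$-computable (indeed computable) sequence listing each finite type $[m:k]$ infinitely often, together with infinitely many copies of $[m:\omega]$.

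The induction for the representation lemma goes as follows. At rank $1$, $\Sigma^0_2(Z)$ versus $\Pi^0_2(Z)$ is coded by finite versus infinite degree. Each further rank adds two quantifiers, because deciding whether a level-one subtree is $[m-1:\omega]$ rather than finite type costs two jumps. We take this from \cite{Mah19}, checking only that every step there is relative to $U$.

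With these sequences in hand, the two copies are built as follows. Let $\Scal_Z$ be the standard copy: the root, with odd level-one nodes carrying the finite types $[m:k]$, each infinitely often, and even level-one nodes carrying $[m:\omega]$. Let $\T_Z$ be the coding copy: even node $2x$ carries $T_x$, and odd nodes carry each finite type infinitely often. Both trees are $Z$-computable with $Z$-computable successor relation. For item (2), observe that $Z^{(2m)}$ is coinfinite, since the complement of a jump is infinite. Hence $\T_Z$ has infinitely many level-one subtrees of type $[m:\omega]$, and every finite type already occurs infinitely often through the odd nodes. The extra finite-type subtrees $T_x$ for $x\in Z^{(2m)}$ therefore do not change the type, so $\T_Z\cong\A_{m+1}\cong\Scal_Z$, independently of $Z$.

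For item (3), let $f$ be any isomorphism $\T_Z\to\Scal_Z$. Then $x\notin Z^{(2m)}$ iff $f(2x)$ is an even level-one node of $\Scal_Z$. The latter is decidable from $f$ alone, because the level-one nodes of $\Scal_Z$ and their parity are computable in $Z$. This gives $Z^{(2m)}\leT f\oplus Z$. To remove the $Z$, we want $Z\leT f$. The plan is to arrange that $f$ encodes $Z$. One option is to make the universes of both copies literally $\omega$ with the root at $0$ and the level-one nodes at a computable set of positions, with parity computable outright. The other is to note that $f$ together with the graph structure of $\Scal_Z$ recovers $Z$, by building $\Scal_Z$ so that its order is computable and only $\T_Z$ depends on $Z$. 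The cleanest choice is to make $\Scal_Z$ computable: its order is then computable, and $Z^{(2m)}\leT f$ directly, since $f(2x)$ is a number whose parity and level-one status we read off computably.

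The main obstacle is certifying that the representation lemma of \cite{Mah19} holds uniformly in the oracle with the precise types $[m:\omega]$ versus $[m:k]$, and with the two-jumps-per-rank count exactly $2m$. In particular, the intermediate levels must produce exactly the types of Definition~\ref{def:types}, and not merely trees of the right rank. We will do this by a careful induction on $m$, tracking that every use of $\emptyset$ in \cite{Mah19} appears only through its jumps and hence relativizes verbatim.
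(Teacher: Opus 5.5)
Your proposal is correct and follows the paper's proof. Both arguments relativize the representation lemma of \cite{Mah19}, applied to $D=Z^{(2m)}\in\Sigma^Z_{2m}$, and place the resulting $T_x$ on the even level-one nodes of the coding copy against a transparent copy; they then get the isomorphism type from the coinfiniteness of $Z^{(2m)}$ and decode membership in $Z^{(2m)}$ from the parity of $f(2x)$. The differences are minor: the paper handles $m=1$ separately via $\mathrm{Inf}^Z$, where your uniform treatment covers it through the rank-one base case, and your $f\oplus Z$ detour is unnecessary once level-one nodes carry fixed computable names such as $(i)$ in both copies, which is your first option and what the paper does implicitly.
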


\begin{proof}
We first treat $m=1$ in full, since it displays the mechanism, and then explain why the general case is the construction of \cite[Thm.~3.3]{Mah19} relativized verbatim.

\emph{Case $m=1$.} Fix a $Z$-computable enumeration $(W^Z_{i,s})$ of the $Z$-c.e.\ sets. Let $\T_Z$ have root $()$, level-one nodes $(i)$ for $i\in\omega$, and level-two nodes $(i,s)$ exactly when $W^Z_{i,s+1}\neq W^Z_{i,s}$. Let $\Scal_Z$ have root $()$, level-one nodes $(i)$, and level-two nodes $(2i,j)$ for all $i,j$, together with $(2\langle i,k\rangle+1,j)$ for $j<i$. Both trees are $Z$-computable with $Z$-computable successor relation. In $\T_Z$, the node $(i)$ has degree equal to the number of enumeration stages of $W^Z_i$, which is finite iff $W^Z_i$ is finite; by the relativized padding lemma and the existence of $Z$-c.e.\ sets of every size, every degree in $\omega\cup\{\omega\}$ occurs infinitely often. The same holds in $\Scal_Z$ by inspection. Hence $\T_Z\cong\Scal_Z\cong[2:\omega]=\A_2$, and the type does not depend on $Z$. If $f\colon\T_Z\to\Scal_Z$ is an isomorphism, then for every $r$,
\[
r\in\mathrm{Inf}^Z \iff \dg((r))=\omega \iff \dg(f((r)))=\omega \iff f((r)) \text{ is even},
\]
a condition computable from $f$ alone. Since $\mathrm{Inf}^Z$ is $\Pi^0_2(Z)$-complete under $\leq_1$ and $\overline{Z''}\in\Pi^0_2(Z)$, we get $Z''\leT \mathrm{Inf}^Z\leT f$.

\emph{Case $m\geq 2$.} We claim that the proof of \cite[Thm.~3.3]{Mah19} relativizes to an arbitrary oracle $Z$, with the same isomorphism type as output. The relevant inventory:
\begin{enumerate}
\item[(a)] The representation results \cite[Lem.~2.4]{Mah19} are stated there for an arbitrary oracle $U$, with the representing functions \emph{computable} (not merely $U$-computable) via the relativized $s$-$m$-$n$ theorem; this strong uniformity is the load-bearing point.
\item[(b)] The structural representation \cite[Lem.~3.1]{Mah19} is likewise stated for an arbitrary $U$; its movable-marker construction defines $U$-c.e.\ sets uniformly in all indices and, as noted there, is uniform in $U$: the marker $r^U(x,y,s)$ is computed from the stage-$s$ approximations $W^U_{\cdot,s}$, and the verification (both cases) nowhere uses $U=\emptyset$.
\item[(c)] Consequently \cite[Lem.~3.2]{Mah19} relativizes: for every $D\in\Sigma^Z_{2k+2}$ there is a uniformly $Z$-computable sequence $(\T^x)_{x\in\omega}$ of trees of rank $k+1$ with $\T^x\cong[k+1:\omega]$ iff $x\notin D$ and $\T^x\cong[k+1:j]$ for some finite $j$ iff $x\in D$. The induction is identical; the base case puts $(x,s)$ at level one iff $W^Z_{g(x),s+1}\neq W^Z_{g(x),s}$, a $Z$-computable condition; and crucially the tree \emph{types} $[k:N]$ are oracle-free isomorphism types.
\end{enumerate}
Now take $D:=Z^{(2m)}$, which is $\Sigma^Z_{2m}=\Sigma^Z_{2(m-1)+2}$, and let $(\T^x)$ be as in (c) with $k=m-1$, so each $\T^x$ has rank $m$ and type $[m:\omega]$ iff $x\notin Z^{(2m)}$. Let $\T_Z$ have root $()$, with each even level-one node $(2x)$ the root of a copy of $\T^x$ and the odd level-one nodes roots of infinitely many copies of $[m:k]$ for every finite $k$; let $\Scal_Z$ be the transparent copy in which every even level-one node roots a $Z$-computable copy of $[m:\omega]$ and the odd nodes again supply all finite types. Both are $Z$-computable. Since $Z^{(2m)}$ is coinfinite --- a cofinite set is computable, while $Z^{(2m)}\geq_T Z'$ is not computable --- infinitely many $x$ lie outside $Z^{(2m)}$, so $\T_Z$ has infinitely many level-one subtrees of type $[m:\omega]$ and infinitely many of each finite type; thus $\T_Z\cong\Scal_Z\cong[m+1:\omega]=\A_{m+1}$, independently of $Z$. Finally, for any isomorphism $f\colon\T_Z\to\Scal_Z$ and every $x$:
\[
x\in Z^{(2m)} \iff \T_Z[(2x)]\not\cong[m:\omega] \iff \Scal_Z[f((2x))]\not\cong[m:\omega] \iff f((2x)) \text{ is odd},
\]
again a parity check computable from $f$ alone; hence $Z^{(2m)}\leT f$.
\end{proof}

\begin{remark}\label{rem:coding-orbit}
Theorem~\ref{thm:relhard} also identifies \emph{where} the coding lives, in both copies: all coded information rides on membership of single level-one nodes in the set
\[
O^{m}_\omega := \{u: \Lev(u)=1 \text{ and } T[u]\cong[m:\omega]\},
\]
which is a single automorphism orbit of $\A_{m+1}$ (for $m=1$ this is part of Lemma~\ref{lem:datum} below; in general it follows from the analogous homogeneity of the types $[m:N]$). In the coding copy, membership of the designated nodes in $O^{m}_\omega$ is a fixed $\Pi^0_{2m}(Z)$-complete set: for $m=1$ this is $\mathrm{Inf}^Z$ (the node $(i)$ lies in $O^1_\omega$ iff $W^Z_i$ is infinite), and for $m\geq 2$ it is the complement of $Z^{(2m)}$ (the node $(2x)$ lies in $O^{m}_\omega$ iff $\T^x\cong[m:\omega]$ iff $x\notin Z^{(2m)}$). These two sets are not equal, but both are $\Pi^0_{2m}(Z)$-complete under $\leq_1$, hence Turing-equivalent to $Z^{(2m)}$, which is all the decoding needs. In the transparent copy, membership in $O^{m}_\omega$ is decided by parity of the level-one node, a $Z$-computable condition. An isomorphism thus transports orbit membership from where it is $\Pi^0_{2m}(Z)$-complete to where it is $Z$-computable; that transport is the entire decoding step.
\end{remark}

\section{The Scott rank of $\A_{m+1}$}\label{sec:SR}

\begin{lemma}\label{lem:relcat}
Let $m\geq 1$ be finite and let $X,Y$ be arbitrary copies (with universe $\omega$ and no computabilty restriction) of a tree of rank at most $m+1$. Then there is an isomorphism $f\colon X\to Y$ with $f\leT (X\oplus Y)^{(2m)}$, uniformly.  In particular every tree of rank at most $m+1$ is (boldface) $\mathbf{\Delta}^0_{2m+1}$-categorical, uniformly and with no parameter.
\end{lemma}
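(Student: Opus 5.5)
We argue by induction on the rank, proving a stronger and uniform statement: for trees of rank at most $k$ (with $k\geq 1$), the isomorphism relation between subtrees is decidable in the $(2k-1)$st jump of the presentation, and isomorphisms can be built in the $(2k-2)$nd jump. Concretely, for each finite $k$ we define by induction a sentence $\theta_{\tau}$ for every isomorphism type $\tau$ of a tree of rank $<k$. The sentence is relativized to a node $u$ in the sense ``$X[u]\cong\tau$.'' We then check that the set $\{(u,v): X[u]\cong Y[v]\}$, for nodes whose subtrees have rank $\leq k$, is $\Pi^0_{2k}(X\oplus Y)$, uniformly.

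The base case covers rank $\leq 1$. Here $X[u]\cong Y[v]$ iff the two nodes have the same number of children. This condition is $\Pi^0_2(X\oplus Y)$: for every $n$, $u$ has at least $n$ children iff $v$ does, and each clause is $\Sigma^0_1$. For the inductive step, $X[u]\cong Y[v]$ at rank $\leq k+1$ iff, for every child $u'$ of $u$ and every $n$, ``there are at least $n$ children of $u$ isomorphic to $X[u']$'' holds iff the same count holds among the children of $v$, and symmetrically. By the inductive hypothesis, the inner isomorphism relation is $\Pi^0_{2k}$. So ``at least $n$ children of $u$ are isomorphic to $u'$'' is $\Sigma^0_{2k+1}$, the biconditional is $\Delta^0_{2k+2}$, and the universal closure is $\Pi^0_{2k+2}$. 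This gives isomorphism of rank-$(m+1)$ subtrees as a $\Pi^0_{2m+2}$ relation, which is too costly by one jump for building $f$ at the top level. For the construction itself we therefore use the relation only one level down.

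To build $f\leT (X\oplus Y)^{(2m)}$, we perform a back-and-forth across levels, in the style of the Claim in Proposition~\ref{prop:transfer}. We maintain a finite partial map $p$ that preserves $\prec$, preserves levels, and satisfies $X[a]\cong Y[p(a)]$ for every $a\in\dom p$. To extend $p$ to a new $a$, we first ensure its predecessors are mapped, since predecessor chains are finite and $X$-computable. If $a'$ is the parent of $a$ with $p(a')=b'$, we search for a child $b$ of $b'$, not already in $\ran p$, with $X[a]\cong Y[b]$. Such a $b$ exists because $X[a']\cong Y[b']$ and only finitely many children of $b'$ are already used. The point to verify is that the multiplicity bookkeeping works: the number of children of $a'$ of type $X[a]$ that are already in $\dom p$ equals the number of children of $b'$ of that type already in $\ran p$, because $p$ preserves types. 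Since $a$ has level $\geq 1$, its subtree has rank $\leq m$, so the test $X[a]\cong Y[b]$ is $\Pi^0_{2m}(X\oplus Y)$ and hence decidable in $(X\oplus Y)^{(2m)}$. The root maps to the root with no test. The back step is symmetric. Uniformity holds because all indices are fixed independently of $X$ and $Y$.

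The main obstacle is the complexity count in the induction. We must verify that the quantifier pattern ``$\forall u'\,\forall n\,(\Sigma_{2k+1}\leftrightarrow\Sigma_{2k+1})$'' really lands in $\Pi^0_{2k+2}$ and does not require additional jumps. We must also confirm that searching over candidate $b$ (a $\Sigma$-search with $(X\oplus Y)^{(2m)}$-decidable tests) terminates without any need to decide the higher-rank isomorphism of the parents. The preserved invariant $X[a']\cong Y[b']$ guarantees termination, so no top-level test is needed. The final clause of the lemma then follows from Theorem~\ref{thm:montalban}(4) with $\alpha=2m+1$ and empty parameter.
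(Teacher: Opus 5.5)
Your proof is correct and is essentially the paper's argument. You relativize the $\Pi_{2\alpha}$ bound of \cite[Lem.~2.1]{Mah19} so that isomorphism of the rank-$\leq m$ subtrees below level one is $\Pi^0_{2m}(X\oplus Y)$, then run the back-and-forth of \cite[Prop.~2.2]{Mah19} with oracle $(X\oplus Y)^{(2m)}$, and your multiplicity bookkeeping guarantees that the searches terminate.

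There are two harmless slips. First, your opening claim that isomorphism of rank-$k$ trees is decidable in the $(2k-1)$st jump should read the $(2k)$th, which is what your induction actually proves. Second, the canonical index of a node's predecessor set, and hence its parent and level, is in general only $X'$-computable rather than $X$-computable; this cost is absorbed by the oracle since $m\geq 1$.
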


\begin{proof}
This is \cite[Lem.~2.1 and Prop.~2.2]{Mah19} relativized. The proof of \cite[Lem.~2.1]{Mah19} that isomorphism of trees of rank $\alpha$ is (uniformly) $\Pi_{2\alpha}$ is, for finite $\alpha$, a purely syntactic induction with no limit stages, and relativizes verbatim: deciding whether $u$ is a level-one node of $X$ is $\Delta^0_2(X)$, and the displayed count-matching condition has two more quantifier blocks than the isomorphism queries for the level-one subtrees, which have rank $\leq m$. Hence ``$X[u]\cong Y[v]$'' is $\Pi^0_{2m}(X\oplus Y)$ uniformly in $u,v$, and is in particular decidable in $(X\oplus Y)^{(2m)}$. The back-and-forth of \cite[Prop.~2.2]{Mah19} then runs with oracle $(X\oplus Y)^{(2m)}$.
\end{proof}

\begin{theorem}\label{thm:SR}
For every finite $m\geq 1$: $\SR(\A_{m+1})=2m+1$. Hence some automorphism orbit of $\A_{m+1}$ is not $\Sin{2m}$-definable, every orbit is $\Sin{2m+1}$-definable, $\A_{m+1}$ has a $\Pin{2m+2}$ Scott sentence and no $\Pin{2m+1}$ Scott sentence.
\end{theorem}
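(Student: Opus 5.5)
The plan is to prove the two inequalities $\SR(\A_{m+1})\leq 2m+1$ and $\SR(\A_{m+1})\geq 2m+1$ separately, each by invoking one earlier result and translating it through Theorem~\ref{thm:montalban}. The remaining clauses of the statement are restatements of these two bounds via that same theorem.

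For the lower bound, I apply Proposition~\ref{prop:transfer} with $n:=2m$. Given any $Z\subseteq\omega$, Theorem~\ref{thm:relhard} supplies $Z$-computable copies $\T_Z,\Scal_Z$ of $\A_{m+1}$ such that every isomorphism $f$ between them computes $Z^{(2m)}$. I need that no such $f$ satisfies $f\leT Z^{(2m-1)}$. If one did, then $Z^{(2m)}\leT f\leT Z^{(2m-1)}$, which is impossible because the jump strictly increases Turing degree. The hypothesis of the transfer principle therefore holds with $n=2m$. It yields an automorphism orbit of a finite tuple that is not $\Sin{2m}$-definable, hence no $\Pin{2m+1}$ Scott sentence and $\SR(\A_{m+1})\geq 2m+1$.

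For the upper bound, I use Lemma~\ref{lem:relcat}: for arbitrary copies $X,Y$ of $\A_{m+1}$ there is an isomorphism computable in $(X\oplus Y)^{(2m)}$, uniformly and with no parameter. This is exactly uniform boldface $\mathbf{\Delta}^0_{2m+1}$-categoricity, with the convention that for finite $\alpha$ the isomorphism is computable in the $(\alpha-1)$st jump; here $\alpha-1=2m$, so $\alpha=2m+1$. By (4)$\Rightarrow$(1),(2) of Theorem~\ref{thm:montalban}, $\A_{m+1}$ has a $\Pin{2m+2}$ Scott sentence and every orbit is $\Sin{2m+1}$-definable. Together with the lower bound, and since $\SR$ is the least $\alpha$ admitting a $\Pin{\alpha+1}$ Scott sentence, this gives $\SR(\A_{m+1})=2m+1$.

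The only delicate point I expect is matching the indexing conventions in the upper bound. I must check that the uniformity in Lemma~\ref{lem:relcat} is of the form demanded by clause (4) of Theorem~\ref{thm:montalban}, namely a single Turing functional working for all pairs of copies. If that clause is read as requiring a fixed parameter $p$, we may take $p=\emptyset$. Should this matching seem shaky, I would fall back on a direct syntactic route. By induction on $k\leq m$, for each $N\in\omega\cup\{\omega\}$ the property ``$T[u]\cong[k:N]$'' is $\Pin{2k}$-definable (or $\dSin{2k}$ for finite $N$), mirroring the $\Pi_{2\alpha}$ computation of \cite[Lem.~2.1]{Mah19}. Orbits of tuples are then described by the finite configuration of the tuple together with the types of the relevant subtrees, which is a conjunction of formulas of complexity below $\Sin{2m+1}$. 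The remaining clauses of the statement are then immediate from Theorem~\ref{thm:montalban}.
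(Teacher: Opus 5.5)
Your proposal is correct and follows the paper's proof essentially verbatim. The lower bound comes from Theorem~\ref{thm:relhard} fed into Proposition~\ref{prop:transfer} at $n=2m$, using $Z^{(2m)}\not\leT Z^{(2m-1)}$, and the upper bound comes from Lemma~\ref{lem:relcat} fed into Theorem~\ref{thm:montalban} (4)$\Rightarrow$(1),(2) with $\alpha=2m+1$; the syntactic fallback is unnecessary, since Lemma~\ref{lem:relcat} is already uniform and parameter-free, exactly as clause (4) requires.
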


\begin{proof}
Lower bound: by Theorem~\ref{thm:relhard}, for every $Z$ there are $Z$-computable copies between which every isomorphism computes $Z^{(2m)}$; since $Z^{(2m)}\not\leT Z^{(2m-1)}$, no isomorphism between them is computable in $Z^{(2m-1)}$. Proposition~\ref{prop:transfer} with $n=2m$ yields an orbit that is not $\Sin{2m}$-definable, hence no $\Pin{2m+1}$ Scott sentence and $\SR(\A_{m+1})\geq 2m+1$.

Upper bound: by Lemma~\ref{lem:relcat}, $\A_{m+1}$ is boldface $\mathbf{\Delta}^0_{2m+1}$-categorical; by Theorem~\ref{thm:montalban} ((4)$\Rightarrow$(1),(2)) it has a $\Pin{2m+2}$ Scott sentence and a Scott family of $\Sin{2m+1}$ formulas, so $\SR(\A_{m+1})\leq 2m+1$.
\end{proof}

\begin{corollary}\label{cor:dichotomy}
For every finite $m\geq 1$: $\SSC(\A_{m+1})\in\{\Sin{2m+1},\,\dSin{2m+1},\,\Pin{2m+2}\}$.
\end{corollary}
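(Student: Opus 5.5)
The plan is to read off the corollary from Theorem~\ref{thm:SR} together with the general correspondence between Scott sentence complexity and Scott rank (Theorem~\ref{thm:montalban}). We show two inequalities in the ordering of complexities: $\SSC(\A_{m+1})\leq\Pin{2m+2}$, and $\SSC(\A_{m+1})$ is strictly above $\Pin{2m+1}$, i.e.\ not $\leq\Pin{2m+1}$. We then list the possible values between these bounds.

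\emph{Upper bound.} Theorem~\ref{thm:SR} gives a $\Pin{2m+2}$ Scott sentence. Hence $\SSC(\A_{m+1})\leq\Pin{2m+2}$, and the candidates are the complexities at most $\Pin{2m+2}$. At or above level $2m+1$, these are $\Sin{2m+1}$, $\Pin{2m+1}$, $\dSin{2m+1}$, $\Sin{2m+2}$ (which is incomparable with $\Pin{2m+2}$ and so not below it), and $\Pin{2m+2}$.

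\emph{Lower bound.} Theorem~\ref{thm:SR} says there is no $\Pin{2m+1}$ Scott sentence. Any Scott sentence of complexity $\Sin{\beta}$, $\Pin{\beta}$ or $\dSin{\beta}$ with $\beta\leq 2m$ is in particular $\Pin{2m+1}$. This holds because $\dSin{\beta}\subseteq\Pin{\beta+1}$ and all the lower classes are included in $\Pin{2m+1}$ under the standard inclusions of infinitary classes. So every complexity $\leq\dSin{2m}$ is excluded, and $\Pin{2m+1}$ itself is excluded.

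\emph{Remaining values.} Among the complexities $\leq\Pin{2m+2}$, what remains is exactly $\Sin{2m+1}$, $\dSin{2m+1}$, and $\Pin{2m+2}$. I would also check that $\Sin{2m+2}$ cannot be the least complexity when $\Pin{2m+2}$ is available. It is incomparable with $\Pin{2m+2}$, but the least complexity is well defined by \cite{AGHTT21}, since the possible values form a linear ordering. This is resolved by noting that if both a $\Sin{2m+2}$ and a $\Pin{2m+2}$ Scott sentence existed, Theorem~\ref{thm:miller} would give a $\dSin{2m+1}$ Scott sentence, which is already among our three candidates. Either way the set $\{\Sin{2m+1},\dSin{2m+1},\Pin{2m+2}\}$ contains $\SSC(\A_{m+1})$.

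The main (mild) obstacle is bookkeeping the incomparable pair $\Sin{2m+2}$ versus $\Pin{2m+2}$. The Miller theorem handles it as above.
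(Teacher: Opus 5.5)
Your proposal is correct and follows the paper's proof: the $\Pin{2m+2}$ Scott sentence bounds $\SSC(\A_{m+1})$ from above, the absence of a $\Pin{2m+1}$ Scott sentence excludes every class contained in $\Pin{2m+1}$, and this leaves exactly $\Sin{2m+1}$, $\dSin{2m+1}$, $\Pin{2m+2}$. Your extra Miller-theorem check on $\Sin{2m+2}$ is harmless but redundant, since the paper simply notes that $\Sin{2m+2}\not\subseteq\Pin{2m+2}$; also, the realized complexities do not form a linear order, and it is Miller's theorem that makes the least one well defined.
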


\begin{proof}
By Theorem~\ref{thm:SR}, $\A_{m+1}$ has a $\Pin{2m+2}$ Scott sentence, so $\SSC(\A_{m+1})$ is one of $\Sin{\beta},\Pin{\beta},\dSin{\beta}$ with class included in $\Pin{2m+2}$. Any candidate whose class is included in $\Pin{2m+1}$ is realized by a $\Pin{2m+1}$ Scott sentence, contradicting Theorem~\ref{thm:SR}. The only candidates included in $\Pin{2m+2}$ but not in $\Pin{2m+1}$ are $\Sin{2m+1}$, $\dSin{2m+1}$, and $\Pin{2m+2}$ (note $\Sin{2m+2}\not\subseteq\Pin{2m+2}$, so it is not a candidate).
\end{proof}
We will show in the next section that for $m=1$ the answer is $\Pin{4}$, and we prove this pattern persists at every finite rank (Theorem~\ref{thm:main-general}).

\section{The rank-two tree: exact Scott sentence complexity}\label{sec:ranktwo}

Throughout this section $\A:=\A_2=[2:\omega]$, with root $\rho$; its level-one nodes realize every degree in $\omega\cup\{\omega\}$, each infinitely often. By the $r=1$ case of Lemma~\ref{lem:datum}, the orbits of single elements are: $\{\rho\}$; the sets $O_k$ ($k\in\omega$) and $O_\omega$ of level-one nodes of degree $k$ resp.\ infinite degree; and the sets $C_\kappa$ ($\kappa\in(\omega\setminus\{0\})\cup\{\omega\}$) of level-two nodes whose parent has degree $\kappa$. All results in this section are purely syntactic; no computability theory is used until Proposition~\ref{prop:computable-ss}.

\subsection{Building blocks}\label{ssec:blocks}
Over trees of rank at most $2$ in the language $\{\prec\}$ we use:
\begin{align*}
\mathrm{root}(x) &:= \forall y\,\neg(y\prec x), && \Pi_{1};\\
\mathrm{lev}_1(x) &:= \exists y\,(y\prec x)\;\wedge\;\forall y\forall z\,\neg(y\prec z\prec x), && \Sigma_1\wedge\Pi_1;\\
\mathrm{lev}_2(x) &:= \exists z\,(z\prec x\wedge \exists y\,(y\prec z)), && \Sigma_1;\\
\dg_{\geq k}(x) &:= \exists y_1\cdots\exists y_k\Big(\bigwedge_{i<j} y_i\neq y_j\wedge\bigwedge_i x\prec y_i\Big), && \Sigma_1;\\
\dg_{=k}(x) &:= \dg_{\geq k}(x)\wedge\neg\dg_{\geq k+1}(x), && \Sigma_{1}\wedge\Pi_1;\\
\dg_{=\omega}(x) &:= \textstyle\bigwedge_{k\in\omega}\dg_{\geq k}(x), && \Pin{2};\\
\mathrm{samepar}(x,y) &:= \exists w\,(w\prec x\wedge w\prec y\wedge \exists v\,(v\prec w)), && \Sigma_1;\\
\mathrm{pardeg}_{=k}(x) &:= \exists w\,(w\prec x\wedge \exists v(v\prec w)\wedge \dg_{=k}(w)), && \Sigma_{2};\\
\mathrm{pardeg}_{=\omega}(x) &:= \exists w\,(w\prec x\wedge \exists v(v\prec w)\wedge \dg_{=\omega}(w)), && \Sin{3}.
\end{align*}

The complexity annotations and intended meanings (degree of a level-one node;
shared level-one parent; parent's degree) follow from $\A$ being a tree of
height two --- every node has level $0$, $1$, or $2$, so the successors of a
level-one node are exactly its children. Conjoining the level clauses to fix
which nodes the degree formulas apply to: $O_k$ is d-$\Sigma_{1}$-definable (by
$\mathrm{lev}_1\wedge\dg_{=k}$), $O_\omega$ is $\Pin{2}$-definable (by
$\mathrm{lev}_1\wedge\dg_{=\omega}$), $C_k$ is $\Sigma_{2}$-definable (by
$\mathrm{pardeg}_{=k}$), and $C_\omega$ is $\Sin{3}$-definable (by
$\mathrm{pardeg}_{=\omega}$); the latter two need no level clause, since the
witness $w$ in $\mathrm{pardeg}$ is forced to be a level-one parent.

\subsection{The extension lemma}
The following finite extension property of rank-$2$ trees with rich level-one spectrum powers all the back-and-forth arguments of this section. All maps below are required to be injective, to preserve and reflect $\prec$, to send root to root, and to preserve levels; we call such a map a \emph{leveled embedding} of its (finite) domain.

\begin{lemma}[Extension Lemma]\label{lem:ext}
Let $A$ and $B$ be trees of rank at most $2$ such that for every $d\in\omega$, $B$ has infinitely many level-one nodes of degree at least $d$. Let $E\subseteq A$ be finite, let $X$ be a subset of $E$ which contains the root of $A$ ($\rho_A\in X$), and let $p\colon X\to B$ be a leveled embedding such that:
\begin{enumerate}
\item[(i)] every level-two element of $X$ whose parent lies in $E$ has its parent in $X$;
\item[(ii)] for every level-one $w\in X$: $\dg_B(p(w))\geq |E\cap\ch_A(w)|$.
\end{enumerate}
Then $p$ extends to a leveled embedding $q\colon E\to B$.
\end{lemma}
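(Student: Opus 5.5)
The plan is to build $q$ from $p$ in two rounds. First, choose images for the level-one elements of $E\setminus X$, together with ``virtual parents'' for stranded level-two elements. Second, choose images for the level-two elements of $E\setminus X$ as children of the appropriate level-one images. The hypothesis on $B$ supplies fresh level-one nodes of large degree whenever they are needed. Since $\rho_A\in X$ and $A$ has rank at most $2$, every element of $E\setminus X$ is of level one or level two. Let $F\subseteq B$ be the finite set consisting of $\ran(p)$ together with the $B$-parents of all level-two elements of $\ran(p)$. Call a level-one node of $B$ \emph{fresh} if it is not in $F$.

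\emph{Round one (level-one nodes).} Let $w_1,\dots,w_r$ be the level-one elements of $E\setminus X$. Let $u_1,\dots,u_s$ be the distinct $A$-parents of those level-two elements of $E\setminus X$ whose parent lies outside $E$; these $u_j$ are not in $E$, and are used only as bookkeeping. For each $w_i$ (resp.\ $u_j$), let $d$ be $|E\cap\ch_A(w_i)|$ (resp.\ the number of elements of $E$ with parent $u_j$). Pick pairwise distinct fresh level-one nodes $q(w_i)$, $\hat u_j$ of $B$ with degree at least the corresponding $d$. This is possible because $B$ has infinitely many level-one nodes of degree $\ge d$ for every $d\in\omega$, and only finitely many nodes are excluded.

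\emph{Round two (level-two nodes).} Each level-two $x\in E\setminus X$ has its $A$-parent either in $X$, in $E\setminus X$, or outside $E$. Group the elements of $E\setminus X$ by their $A$-parent, and send each group injectively to children of, respectively, $p(w)$, $q(w_i)$, or $\hat u_j$, avoiding elements of $\ran(p)$.

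The only delicate count is the parent-in-$X$ case. Some children of $p(w)$ may already be images under $p$. If $p(w)\prec p(x)$ with $x\in X$, then reflection gives $w\prec x$, so the only used children of $p(w)$ are images of elements of $X\cap\ch_A(w)$. Hence at least $\dg_B(p(w))-|X\cap\ch_A(w)|\ge|(E\setminus X)\cap\ch_A(w)|$ children remain, by (ii).

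For the other two cases, the children of a fresh node are not in $\ran(p)$. Indeed, a level-two element of $\ran(p)$ has its parent in $F$ by the definition of $F$. Also, distinct groups use distinct parents, so $q$ is injective.

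\emph{Verification.} It remains to check that $q$ is a leveled embedding of $E$. Levels and the root are preserved by construction. For $\prec$, I would check pairs $(y,z)$ with $y$ of lower level than $z$, splitting by membership in $X$.
\begin{itemize}
\item Both in $X$: this is handled by $p$.
\item $y$ level-one, $z$ level-two, and $z\in X$, $y\notin X$: $q(y)$ is fresh, hence not the parent of $p(z)$; and $y\not\prec z$, since otherwise (i) would force $y\in X$.
\item $y\in X$, $z\notin X$: $q(z)$ was placed under $p(y)$ exactly when $y$ is the $A$-parent of $z$. Otherwise $q(z)$ sits under a fresh node or under some other $p(w)$, and $p(w)\neq p(y)$ by injectivity.
\item Both outside $X$: relations hold exactly according to the grouping, because the distinct parents $q(w_i)$ and $\hat u_j$ are distinct.
\end{itemize}
The root case is trivial since every non-root is above the root in both trees.

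The main obstacle I expect is exactly this reflection bookkeeping. The difficulty is ruling out accidental comparabilities between new images and old ones, in particular for level-two elements of $X$ whose parent lies outside $E$. This is why fresh nodes must avoid not just $\ran(p)$ but also the parents of its level-two elements, and I would define $F$ to include those parents from the start.
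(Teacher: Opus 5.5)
Your proof is correct and follows essentially the same route as the paper. It places the level-one elements of $E\setminus X$ first, on fresh level-one nodes of sufficient degree (fresh meaning outside $\ran(p)$ and not the parent of a level-two point of $\ran(p)$), then places the level-two elements on unused children, with the one real count, for parents in $X$, coming from hypothesis (ii) plus reflection of $\prec$. The differences are cosmetic: you fix the excluded set $F$ in advance and keep siblings whose common parent lies outside $E$ together under a single virtual parent $\hat u_j$, whereas the paper updates freshness relative to $\ran(q)$ as it goes and lets such orphans be scattered under different fresh parents.
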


\begin{proof}
Call a level-one node $z$ of $B$ \emph{fresh} (at a given stage) if $z\notin\ran(q)$ and $z$ is not the parent of any level-two node in $\ran(q)$; call a child of a node fresh if it is not in $\ran(q)$. At every stage only finitely many nodes are excluded, so fresh level-one nodes of degree $\geq d$ exist for every $d$, by hypothesis on $B$.

Process the elements of $E\setminus X$ in the order: first all level-one elements, then all level-two elements.

\emph{Level-one $e$.} Set $q(e):=$ a fresh level-one node of $B$ of degree $\geq |E\cap\ch_A(e)|$. We check that the extended map is a leveled embedding by verifying that the relation $\prec$ is perfectly preserved between the node we just added to our map ($e$) and every node we have already mapped ($x$). If $x=\rho_A$ (trivial): $\rho_A\prec e$ and $q(x)=\rho_B\prec q(e)$. If $x$ is level-one (also trivial): $e,x$ are incomparable, as are the distinct level-one nodes $q(e),q(x)$. If $x$ is level-two: $x\notin\ch_A(e)$ --- for $x\in X$ this is hypothesis (i) (its parent would have to lie in $X$, but $e\notin X$), and level-two elements of $E\setminus X$ have not been processed yet --- so $e,x$ are incomparable; on the image side, $q(x)$ is not a child of $q(e)$ because $q(e)$ is fresh (not the parent of anything in $\ran(q)$), so $q(e),q(x)$ are incomparable.

\emph{Level-two $e$.} Let $w$ be the parent of $e$ in $A$. If $w\in E$ then $w\in\dom(q)$ (all level-one elements of $E$ are now placed, and $w\in X$ is allowed); set $q(e):=$ a fresh child of $q(w)$. Such a child exists: the number of children of $q(w)$ already used equals the number of previously placed elements of $E\cap\ch_A(w)$, which is below $|E\cap\ch_A(w)|\leq\dg_B(q(w))$ --- by hypothesis (ii) when $w\in X$, and by the choice of $q(w)$ when $w\in E\setminus X$. If $w\notin E$: set $q(e):=$ a fresh child of a fresh level-one node of $B$ of degree $\geq 1$. Checking the new pairs $(e,x)$: if $x=\rho_A$, fine. If $x$ is level-one: $e$ is comparable to $x$ iff $x=w$, and $q(e)$ is comparable to $q(x)$ iff $q(x)$ is the parent of $q(e)$, which by construction and injectivity happens iff $x=w$ (when $w\notin E$, the designated parent of $q(e)$ is fresh, hence not in $\ran(q)$). If $x$ is level-two: $e,x$ are incomparable, and so are any two level-two nodes of $B$. Injectivity is maintained throughout by freshness. (Note that the quantifier-free type does not track \emph{unnamed} shared parents: two level-two elements of $E$ with the same parent outside $E$ may be sent to children of different nodes; this is permitted, since a leveled embedding is only required to preserve and reflect $\prec$ and equality on its domain.)
\end{proof}

\subsection{Orbits and a $\Sin{3}$ Scott family}

\begin{lemma}[Orbit datum]\label{lem:datum}
Tuples (finite) $\bar a,\bar b$ from $\A$ lie in the same automorphism orbit if and only if they have the same \emph{datum}:
\begin{enumerate}
\item[(D1)] the same induced partial-order diagram over the root (including equalities and levels);
\item[(D2)] $\dg(a_i)=\dg(b_i)$ for every level-one entry;
\item[(D3)] the parent structures for all level-two nodes must be identical. If two nodes share a parent on the $\bar{a}$ side, their counterparts must share a parent on the $\bar{b}$ side---even if that parent isn't actually named in the tuple. Furthermore, those unnamed ``ghost'' parents must have the exact same total number of children.
\end{enumerate}
\end{lemma}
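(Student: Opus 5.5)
The plan has two directions. Necessity is immediate: automorphisms of $\A$ preserve order, equality, levels and degrees, and they map the (possibly unnamed) parent of a level-two node to the parent of its image. Hence they preserve (D1)--(D3). The substance is sufficiency: given $\bar a,\bar b$ with the same datum, we build an automorphism of $\A$ carrying $\bar a$ to $\bar b$.

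First we normalize. Let $\bar a^+$ be $\bar a$ together with the root and the parents of all level-two entries, and define $\bar b^+$ in the same way. Condition (D3) says exactly that the correspondence $a_i\mapsto b_i$ extends to a well-defined bijection $\bar a^+\to\bar b^+$. It sends each ghost parent of a level-two entry to the parent of the corresponding entry, it is well defined and injective because shared parents correspond, and it matches degrees. By (D1) and (D2), together with the degree clause of (D3) for ghost parents, this extended map $\sigma$ is a leveled partial isomorphism that preserves the degree of every level-one node in its domain. So we may assume the tuples are closed under root and parents, with matching degrees at all level-one entries.

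Next we build the automorphism piece by piece rather than by an element-by-element back-and-forth. For each level-one $w\in\bar a^+$, the subtrees $\A[w]$ and $\A[\sigma(w)]$ are stars with the same number $\dg(w)\in\omega\cup\{\omega\}$ of children. The named children of $w$ inside $\bar a$ are sent injectively by $\sigma$ to named children of $\sigma(w)$. Any injection between finite subsets of two sets of equal cardinality extends to a bijection: in the finite case the complements have equal size, and in the case $\omega$ both complements are infinite. So we extend to a bijection of children, which gives an isomorphism $\A[w]\to\A[\sigma(w)]$.

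It remains to biject the unnamed level-one nodes, that is, the nodes outside $\bar a^+$ and outside $\bar b^+$. For each degree $\kappa\in\omega\cup\{\omega\}$, the level-one nodes of degree $\kappa$ form an infinite set, by the definition of $[2:\omega]$. The named nodes $\bar a^+$ remove only finitely many of these, and by degree preservation the same number is removed from each degree class on the $\bar b$ side. So for each $\kappa$ the remaining nodes of degree $\kappa$ form two countably infinite sets, and we biject them. Each matched pair of remaining nodes roots isomorphic stars, and we choose an isomorphism between them. The union of the root map, the named-node isomorphisms and these remaining-node isomorphisms is a level-preserving bijection that preserves $\prec$, hence an automorphism extending $\bar a\mapsto\bar b$.

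The main obstacle I expect is the normalization step: checking carefully that (D3) really determines a well-defined injective degree-preserving map on ghost parents. In particular, a ghost parent on one side must not coincide with a named level-one entry on the other. (D1) and (D3) together should rule this out, since whether a level-one entry is the parent of a level-two entry is part of the order diagram. I would state this check explicitly before the cardinality bookkeeping. Lemma~\ref{lem:ext} is not needed here, since the counting argument is direct.
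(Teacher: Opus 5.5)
Your proof is correct and follows essentially the same route as the paper's: first match the named points, then the unnamed (ghost) parents via (D3), then extend over the children of each matched level-one node by a cardinality count, and finally biject the remaining level-one nodes degree class by degree class. The check you flag is settled by (D1), as you suspected: a level-one entry $a_j$ is the parent of a level-two entry $a_i$ iff $a_j\prec a_i$, so an unnamed parent on one side always corresponds to an unnamed parent on the other.
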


\begin{proof}
Invariance of the datum under automorphisms is immediate, so suppose the data of $\bar a$ and $\bar b$ agree; we build an automorphism $g$ with $g(\bar a)=\bar b$.

\emph{Named points.} Set $g(\rho)=\rho$ and $g(a_i)=b_i$. By (D1) this is well defined, injective, and a leveled partial isomorphism on $\{\rho\}\cup\bar a$ (equalities, levels, and $\prec$ among named entries all agree).

\emph{Unnamed parents.} Among the level-two entries of $\bar a$ with unnamed parents, having a common parent is an equivalence relation on their indices; call its blocks the shared-parent classes. For each such class $C$, let $w_C$ be the common parent on the $\bar a$ side; by (D3) the entries $b_i$ ($i\in C$) likewise share a parent $v_C$, and $\dg(w_C)=\dg(v_C)$. Set $g(w_C)=v_C$. Distinct classes have distinct parents on each side, and each $w_C$, $v_C$ is level-one and distinct from every named entry, so $g$ remains a leveled embedding: the only new $\prec$-relations are $\rho\prec w_C$ and $w_C\prec a_i$ ($i\in C$), matched by $\rho\prec v_C$ and $v_C\prec b_i$, reflection holding because an unnamed parent is not a named entry. 

\emph{Children of matched level-one nodes.} Every level-one node now in $\dom(g)$---the named level-one entries (matched by (D2)) and the $w_C$ (matched by (D3))---is sent to one of equal degree carrying equally many named children. Extend $g$ by a bijection between the unnamed children of each matched pair: the counts agree, being equal degrees minus equal finite named counts ($\omega$ minus finite is $\omega$).

\emph{Everything else.} For each $t\in\omega\cup\{\omega\}$, $\A$ has infinitely many level-one nodes of degree $t$, only finitely many of which lie in $\dom(g)$ or $\ran(g)$; match the remainder by a degree-preserving bijection and extend over their children as above. The resulting $g$ is a level-preserving bijection of $\A$ preserving and reflecting $\prec$---an automorphism---with $g(\bar a)=\bar b$.
\end{proof}

\begin{proposition}\label{prop:scottfamily}
Every automorphism orbit of $\A$ is definable by a finite conjunction of
instances of the formulas of \S\ref{ssec:blocks} together with atomic
$\prec$-formulas and (in)equalities; in particular every orbit is
$\Sin{3}$-definable, and $\A$ has a parameterless Scott family of $\Sin{3}$
formulas. Hence $\A$ has a $\Pin{4}$ Scott sentence and $\SR(\A)\leq 3$.
\end{proposition}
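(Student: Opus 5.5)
The plan is to use Lemma~\ref{lem:datum}: it suffices to write, for each finite tuple $\bar a$, a formula $\Phi_{\bar a}(\bar x)$ built from the blocks of \S\ref{ssec:blocks} that expresses exactly the datum (D1)--(D3) of $\bar a$. Then $\A\models\Phi_{\bar a}(\bar b)$ iff $\bar b$ has the same datum as $\bar a$, iff $\bar b$ lies in the orbit of $\bar a$. Once each $\Phi_{\bar a}$ is checked to be $\Sin{3}$, the family $\{\Phi_{\bar a}\}$ is a parameterless $\Sin{3}$ Scott family. Theorem~\ref{thm:montalban} ((2)$\Rightarrow$(1)) then gives the $\Pin{4}$ Scott sentence and $\SR(\A)\le 3$.

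The formula $\Phi_{\bar a}$ is the conjunction of three groups of clauses.
\begin{enumerate}
\item[(D1)] Include every atomic and negated atomic formula ($x_i\prec x_j$, $x_i=x_j$ and their negations) true of $\bar a$, together with $\mathrm{root}$, $\mathrm{lev}_1$ or $\mathrm{lev}_2$ for each entry according to its level. These are all Boolean combinations of $\Sigma_1$ and $\Pi_1$ formulas.
\item[(D2)] For each level-one entry $a_i$, include $\dg_{=k}(x_i)$ if $\dg(a_i)=k$ is finite, and $\dg_{=\omega}(x_i)$ otherwise. These are at worst $\Pin{2}$.
\item[(D3)] For level-two entries $a_i,a_j$, include $\mathrm{samepar}(x_i,x_j)$ or its negation, as appropriate; these are d-$\Sigma_1$. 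For each level-two entry $a_i$ whose parent is unnamed, include $\mathrm{pardeg}_{=\kappa}(x_i)$, where $\kappa$ is that parent's degree; this is $\Sigma_2$ or $\Sin{3}$. For an entry whose parent is named, the parent relation is already fixed by the atomic diagram, and its degree by (D2).
\end{enumerate}
Every conjunct is $\Sin{3}$, and a finite conjunction of $\Sin{3}$ formulas is again $\Sin{3}$.

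It remains to check that satisfying $\Phi_{\bar a}$ forces the datum of $\bar b$ to equal that of $\bar a$.

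The atomic and level clauses give (D1). The degree clauses give (D2).

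For (D3), the $\mathrm{samepar}$ clauses fix which level-two entries share a parent. Whether a shared parent is named is determined by the atomic diagram, because the parent of $b_i$ equals a named $b_j$ iff $x_j\prec x_i$ and $\mathrm{lev}_1(x_j)$ hold. The $\mathrm{pardeg}$ clauses fix the degree of each unnamed ghost parent. Since a level-two node has a unique parent, the existential witness $w$ in $\mathrm{pardeg}$ is forced to be that parent, so the clause pins the degree correctly.

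The main point needing care is this (D3) matching: making sure the ghost-parent equivalence classes, and the requirement that unnamed parents of $\bar b$ are not named entries, are fully captured. The atomic diagram handles the latter. The main complexity bookkeeping point is that $\mathrm{pardeg}_{=\omega}$ sits in $\Sin{3}$, because it is an $\exists$ in front of the $\Pin{2}$ formula $\dg_{=\omega}$; this is exactly why the family is $\Sin{3}$ rather than lower.
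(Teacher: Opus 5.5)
Your proposal is correct and follows essentially the same route as the paper. You build the same conjunction (quantifier-free diagram, level clauses, degree clauses, and $\mathrm{samepar}$/$\mathrm{pardeg}$ clauses for entries with unnamed parents), check that it pins down the datum of Lemma~\ref{lem:datum}, note that the only $\Sin{3}$ ingredient is the positively occurring $\mathrm{pardeg}_{=\omega}$, and finish with Theorem~\ref{thm:montalban} (2)$\Rightarrow$(1). Your explicit observation that the atomic diagram prevents a ghost parent of $\bar b$ from being a named entry is a point the paper leaves implicit, but the argument is the same.
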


\begin{proof}
Given $\bar a$, let $\varphi_{\bar a}(\bar x)$ be the conjunction of the
following clauses:
\begin{itemize}
\item the complete quantifier-free $\{\prec\}$-diagram of $\bar a$
  (including $\mathrm{root}(x_i)$ for the root entry, if the root is in the tuple $\bar a$);
\item $\mathrm{lev}_1(x_i)$ or $\mathrm{lev}_2(x_i)$ for each entry, matching
  its level;
\item $\dg_{=k}(x_i)$ or $\dg_{=\omega}(x_i)$ for each level-one entry,
  matching (D2);
\item for the level-two entries with unnamed parents:
  $\mathrm{samepar}(x_i,x_j)$ or $\neg\mathrm{samepar}(x_i,x_j)$ on each pair,
  and $\mathrm{pardeg}_{=k}(x_i)$ or $\mathrm{pardeg}_{=\omega}(x_i)$, matching
  (D3).
\end{itemize}

Suppose $\A\models\varphi_{\bar a}(\bar b)$. Then $\bar b$ has the same
quantifier-free diagram and levels as $\bar a$ (so (D1) holds), the same
level-one degrees (D2), the same named-parent pattern, the same
shared-parent partition among entries with unnamed parents, and the same
unnamed-parent degrees (D3). For the shared-parent partition note that the
inner clause $\exists v(v\prec w)$ of $\mathrm{samepar}$ forces the common
predecessor $w$ to lie at level one rather than at the root, so
$\mathrm{samepar}$ expresses precisely ``shared level-one parent.'' Thus
$\bar b$ realizes the full datum of $\bar a$, and Lemma~\ref{lem:datum} gives
$\A\models\varphi_{\bar a}(\bar b)$ iff $\bar b$ lies in the orbit of
$\bar a$.

For the complexity: the diagram, $\mathrm{root}$, $\mathrm{lev}_2$,
$\dg_{\geq k}$, $\mathrm{samepar}$ are in $\Sigma_1\cup\Pi_1$; the
$\mathrm{d}\text{-}\Sigma_1$ clauses $\mathrm{lev}_1,\dg_{=k}$ and  $\mathrm{pardeg}_{=k}$ are in $\Sigma_2$; $\dg_{=\omega}$ is
$\Pin2$; and the unique $\Sin3$ clause is $\mathrm{pardeg}_{=\omega}$, which
occurs only positively. All of these are contained in $\Sin3$, no conjunct is $\Pin3$ or higher, and
$\Sin3$ is closed under finite conjunction; hence $\varphi_{\bar a}\in\Sin3$.

Hence $\{\varphi_{\bar a}\colon\bar a \text{ a tuple in }\A\}$ is a parameterless $\Sin{3}$ Scott family for
$\A$. By Theorem~\ref{thm:montalban} ((2)$\Rightarrow$(1)), $\A$ has a
$\Pin{4}$ Scott sentence, and $\SR(\A)\leq 3$ directly from the definition of
$\SR$.
\end{proof}

\subsection{$O_\omega$ is not $\Sin{2}$-definable}

\begin{theorem}\label{thm:notSigma2}
There is no $\Sin{2}$ formula $\varphi(x)$ such that for all $u\in\A$: $\A\models\varphi(u)$ iff $u\in O_\omega$. Together with \S\ref{ssec:blocks}, the orbit $O_\omega$ is $\Pin{2}$- but not $\Sin{2}$-definable in $\A$.
\end{theorem}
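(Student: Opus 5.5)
Suppose toward a contradiction that $\varphi(x)=\bigvee_{i\in\omega}\exists\bar y_i\,\psi_i(x,\bar y_i)$ is a $\Sin{2}$ formula defining $O_\omega$, with each $\psi_i$ a $\Pin{1}$ formula, i.e.\ a countable conjunction of universally quantified quantifier-free formulas. Fix $u\in O_\omega$. Then $\A\models\psi_i(u,\bar c)$ for some $i$ and some finite tuple $\bar c$. We will produce a level-one node $u'$ of \emph{finite} degree and a tuple $\bar c'$ with $\A\models\psi_i(u',\bar c')$. This gives $\A\models\varphi(u')$ with $u'\notin O_\omega$, which is the contradiction. The tool is the standard fact that $\Pin{1}$ formulas are preserved under passing to substructures, or more precisely under leveled embeddings. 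If $h$ embeds $\A$ into $\A$ (preserving and reflecting $\prec$ and $=$) with $h(u,\bar c)=(u',\bar c')$, then every universal statement true of $(u,\bar c)$ in $\A$ is true of $(u',\bar c')$ in $h[\A]$. Since truth in $h[\A]\cong\A$ transfers back along $h$, it is enough to argue in the other direction: find an embedding $g\colon\A\to\A$ with $g(u',\bar c')=(u,\bar c)$. Then any universal sentence $\forall\bar z\,\theta$ true of $(u,\bar c)$ in $\A$ holds of $(u',\bar c')$, because each instance $\bar z$ at $(u',\bar c')$ maps under $g$ to an instance at $(u,\bar c)$, and quantifier-free formulas are preserved by embeddings.

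So the plan is as follows. Choose $u'$ to be a level-one node of finite degree $d$, where $d$ is larger than the number of children of $u$ named in $\bar c$. Build a total leveled embedding $g\colon\A\to\A$ with $g(u')=u$ and $g(\bar c')=\bar c$. First set up $\bar c'$ as a tuple of the same datum shape as $\bar c$, except at the $u$-slot. Named children of $u$ go to named children of $u'$. Other entries are copied by an automorphism-like matching on the nodes outside $T[u]$, which is possible by Lemma~\ref{lem:datum}-style homogeneity, after choosing $u'$ outside the finitely many level-one nodes relevant to $\bar c$. Now define $g$ by a back-and-forth-free forward construction. Enumerate $\A$, and at each step extend the finite partial map using Lemma~\ref{lem:ext}, with $A=B=\A$, $X$ the current domain, and $E$ the domain together with the next element and its parent. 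The hypothesis of Lemma~\ref{lem:ext} on $B$ holds because $\A$ has infinitely many level-one nodes of every degree.

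Condition (i) of Lemma~\ref{lem:ext} is ensured by always adding parents before children. Condition (ii) is the key point. Each level-one node $w$ in the domain must be mapped to a node of degree at least the number of its children eventually placed. This holds if every $w$ goes to a node of degree $\geq\dg(w)$. Infinite-degree nodes are mapped to infinite-degree nodes, in particular $u'\mapsto u$ is fine since $\dg(u)=\omega\geq d$. Nodes of finite degree $k$ are mapped to fresh nodes of degree $\geq k$. Consequently, the union of the finite stages is a total leveled embedding.

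The main obstacle is exactly this point: a universal formula cannot distinguish "degree $d$" from "degree $\omega$" under an embedding $\A\to\A$ that sends $u'$ to $u$. I need to check carefully that the initial map $(u',\bar c')\mapsto(u,\bar c)$ is a legitimate leveled embedding satisfying (i) and (ii). In particular, entries of $\bar c$ that are children of $u$, or that share unnamed parents with each other, must be reproduced correctly, and $u'$ must have enough children for the named ones. Choosing $d$ large and placing $\bar c'$ by copying $\bar c$ through an automorphism of $\A$ fixing everything outside $T[u]$ (with $T[u]$ replaced by $T[u']$) should handle this. The final claim of Theorem~\ref{thm:notSigma2} ($\Pin{2}$-definability) is already given in \S\ref{ssec:blocks}.
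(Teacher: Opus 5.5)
Your argument is correct. It uses the same mechanism as the paper: move the configuration at a finite-degree node back onto $u$, where hypothesis (ii) of Lemma~\ref{lem:ext} cannot fail because $\dg(u)=\omega$. The difference is that you do this globally, while the paper does it locally.

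\textbf{Paper's route.} It defines only a finite map $h$ with $h(u)=v$. For each putative counterexample $\bar e$ to a conjunct $\forall\bar z\,\theta$ at $(v,h(\bar a))$, it applies Lemma~\ref{lem:ext} once, to the finite set $E=X\cup\bar e$ with $p=h^{-1}$. This uses the lemma exactly as stated. The price is a preliminary variable substitution to arrange $u\notin\bar a$.

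\textbf{Your route.} You build a single total self-embedding $g$ of $\A$ with $g(u')=u$ by iterating the lemma $\omega$ times. This gives a cleaner transfer: all universal facts at once, by downward preservation along $g$. It also makes the substitution step unnecessary, since occurrences of $u$ in $\bar c$ simply become $u'$ in $\bar c'$.

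\textbf{What your route costs.} You need an invariant that the lemma's statement does not supply. The lemma only promises that a new level-one $e$ gets an image of degree $\geq|E\cap\ch_A(e)|$. So an infinite-degree node could be sent to a finite-degree one, and the construction would later stall on hypothesis (ii). You therefore have to rerun the lemma's proof with your stronger choice rule: images of degree $\geq\dg(e)$, and of infinite degree when $\dg(e)=\omega$. You do state this rule, so it is fine, but it should be presented as a direct construction rather than as an application of the lemma.

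\textbf{The initial map.} Describing $\bar c'$ via an automorphism ``fixing everything outside $T[u]$'' is not literally coherent. The clean initial map is:
\begin{itemize}
\item $u'\mapsto u$;
\item $c$ distinct children of $u'$ go to the $c$ named children of $u$;
\item the identity on $\rho$, on the entries of $\bar c$ outside $T[u]$, and on the set $P$ of level-one parents of those entries;
\item $u'$ is chosen of finite degree $d\geq c$ and outside $\bar c\cup P$.
\end{itemize}
This is exactly the inverse of the paper's $h$. Including $P$ in the domain makes hypothesis (i) hold from the start, and it gives the degree-dominance invariant its base case.
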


\begin{proof}
Suppose $\varphi$ is such a formula; write it in normal form $\varphi(x)=\bigvee_i\exists\bar y_i\,\psi_i(x,\bar y_i)$ with each $\psi_i\in\Pin{1}$, i.e.\ a countable conjunction of finitary universal formulas with quantifier-free matrix.

Fix $u\in O_\omega$. Since $\A\models\varphi(u)$, some disjunct $\exists\bar y\,\psi$
with $\psi:=\psi_i\in\Pin{1}$ is witnessed by a finite tuple $\bar a$, i.e.\
$\A\models\psi(u,\bar a)$. We may assume $u\notin\bar a$: if
$J:=\{\,j : a_j=u\,\}$,
replace each variable $y_j$ with $j\in J$ throughout $\psi$ by the displayed
variable $x$. The resulting formula $\psi^*(x,\bar y^*)$, where
$\bar y^*:=(y_k)_{k\notin J}$, is again $\Pin{1}$ (a variable-for-variable
substitution leaves the universal prefix and quantifier-free matrix intact);
and since $x$ and every replaced $y_j$ are interpreted by the same element $u$,
we have $\A\models\psi^*(u,\bar a^*)$ with $\bar a^*:=(a_k)_{k\notin J}$. By
construction $a_k\neq u$ for every $k\notin J$, so $u\notin\bar a^*$. Replacing
$(\psi,\bar a)$ by $(\psi^*,\bar a^*)$ gives a disjunct $\psi\in\Pin{1}$ and a
witness $\bar a$ with $u\notin\bar a$.

The strategy now is to build, using the Extension Lemma~\ref{lem:ext}, a leveled embedding $h$ that sends $u$ to a fresh level-one node $v$ of finite degree. The Claim below implies that $\A\models\psi(v,h(\bar a))$ (the $\Pin1$ disjunct $\psi$ is preserved), and hence $\A\models\varphi(v)$. But, as $\dg(v)$ is
finite ($v\notin O_\omega$), we get the desired contradiction.

Let $c:=|\bar a\cap\ch(u)|$ and let $P$ be the (finite) set of parents of level-two elements of $\bar a\setminus\ch(u)$. Choose a level-one node $v$ of finite degree $N\geq c$ that is fresh for the configuration, i.e., $v\notin\bar a\cup\{u\}\cup P$. Such $v$ exists since each $O_N$ is infinite. Define $h$ on $\{\rho,u\}\cup\bar a\cup P$ by: $h(u):=v$; $h$ maps the $c$ elements of $\bar a\cap\ch(u)$ to $c$ distinct children of $v$; $h$ is the identity on $\rho$, on $\bar a\setminus\ch(u)$, and on $P$. A case check on pairs shows $h$ is a leveled embedding: the only comparabilities involving relocated elements are $\rho\prec u\prec a$ for $a\in\bar a\cap\ch(u)$, matched by $\rho\prec v\prec h(a)$; all other pairs involving $u$ or its named children are incomparable on both sides, by freshness of $v$ and of its chosen children.

\begin{claim}\label{claim:pi1transfer}
Every $\Pin{1}$ formula true of $(u,\bar a)$ in $\A$ is true of $(v,h(\bar a))$ in $\A$.
\end{claim}

The point of the Claim is to transport the chosen disjunct $\psi$ from the configuration at $u$ to its image at $v$. This is \emph{not} automatic from $h$ being a leveled embedding. The map $h$ has finite domain $\{\rho,u\}\cup\bar a\cup P$, so by itself it preserves only quantifier-free facts about tuples in that domain; first-order---let alone infinitary---formulas are not preserved, since their quantifiers range over all of $\A$, not over $\dom(h)$.

\begin{proof}[Proof of Claim]
It suffices to treat one conjunct $\forall\bar z\,\theta$ ($\theta$ quantifier-free) at a time. Suppose $\A\models\forall\bar z\,\theta(u,\bar a,\bar z)$ but $\A\models\neg\theta(v,h(\bar a),\bar e)$ for some tuple $\bar e$. Apply the Extension Lemma~\ref{lem:ext} with $A=B=\A$, with
\[
X:=\{\rho,v\}\cup h(\bar a)\cup P, \qquad E:=X\cup\bar e, \qquad p:=h^{-1}\ \text{(so } p(v)=u\text{)} .
\]
Notice that $p$ has domain exactly $X$. Hypothesis (i) holds: every level-two element of $X$ is either a named child of $v$ (the parent is in $X$) or an element of $h(\bar a)\setminus\ch(v) = \bar a\setminus\ch(u)$ (parent is in $P\subseteq X$ if the parent lies in $E$ at all). Hypothesis (ii) holds: $p(v)=u$ has degree $\omega$; every other level-one element of $X$ is mapped by the identity, for which (ii) is trivial since $E\cap\ch(w)\subseteq\ch(w)$. We obtain a leveled embedding $q\colon E\to\A$ extending $p$, so $q(v)=u$ and $q(h(a))=a$ for all $a$. Since $\theta$ is quantifier-free in the relational language $\{\prec\}$ and $q$ preserves and reflects $\prec$ and equality, $\A\models\neg\theta(u,\bar a,q(\bar e))$ --- contradicting $\A\models\forall\bar z\,\theta(u,\bar a,\bar z)$. (The decisive asymmetry is hidden in hypothesis (ii): pulling the configuration back \emph{onto} $u$ never exhausts the child supply because $\dg(u)=\omega$, whereas the forward direction onto a finite-degree node would fail for large $\bar e$. This is the whole reason the lemma, and the theorem, are true.)
\end{proof}

By the Claim, $\A\models\psi(v,h(\bar a))$, hence $\A\models\varphi(v)$, hence $v\in O_\omega$; but $\dg(v)=N$ is finite. Contradiction.
\end{proof}

\begin{remark}\label{rem:localize}
Theorem~\ref{thm:notSigma2} localizes the orbit produced abstractly by Theorem~\ref{thm:SR} (case $m=1$): the coding orbit $O_\omega$ of Remark~\ref{rem:coding-orbit} is itself the non-$\Sin{2}$-definable one. The proof uses only that every finite degree occurs infinitely often among level-one nodes and that members of $O_\omega$ have infinitely many children; it therefore applies verbatim in $[2:j]$ for every $j\geq 1$.
\end{remark}

\subsection{No $\Pin{3}$ and no $\dSin{3}$ Scott sentence}

For $j\in\omega$ let $B_j:=[2:j]$, realized inside $\A$ as follows: enumerate $O_\omega=\{u_0,u_1,\dots\}$ and let $B_j$ be the induced subtree of $\A$ on $\A\setminus\bigcup_{i\geq j}\big(\{u_i\}\cup\ch(u_i)\big)$. Then $B_j\cong[2:j]$ and $B_j\not\cong\A$.

\begin{lemma}\label{lem:transfers}
\begin{enumerate}
\item[(i)] Every $\Pin{3}$ sentence true in $\A$ is true in $B_j$, for every $j\in\omega$.
\item[(ii)] For every $\Sin{3}$ sentence $\sigma$ true in $\A$ there is $j_\sigma\in\omega$ such that $B_j\models\sigma$ for all $j\geq j_\sigma$.
\end{enumerate}
\end{lemma}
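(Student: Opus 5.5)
Both parts rest on comparing $\A$ with $B_j$ via back-and-forth relations. The working tool will be the standard characterization: $\Pin{\alpha}$ sentences true in $\A$ transfer to $\B$ whenever $\A\leq_\alpha\B$, in the back-and-forth ordering of Ash--Knight (\cite[Ch.~15]{AK00}). Here $\A\leq_1\B$ means every finite substructure of $\B$ embeds in $\A$. For $\alpha\geq 2$, $(\A,\bar a)\leq_\alpha(\B,\bar b)$ means that for each $\beta<\alpha$ and each $\bar d$ in $\B$ there is $\bar c$ in $\A$ with $(\B,\bar b\bar d)\leq_\beta(\A,\bar a\bar c)$.

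\textbf{Part (i).} I will show $\A\leq_3 B_j$, i.e.\ for every tuple $\bar d\in B_j$ there is $\bar c\in\A$ with $(B_j,\bar d)\leq_2(\A,\bar c)$. Take $\bar c:=\bar d$, viewing $B_j\subseteq\A$. It then remains to prove $(B_j,\bar d)\leq_2(\A,\bar d)$: for every $\bar e\in\A$ there must be $\bar f\in B_j$ with $(\A,\bar d\bar e)\leq_1(B_j,\bar d\bar f)$. Unwinding, this asks that every $\Pi_1$ fact about $(\bar d,\bar f)$ in $B_j$ hold of $(\bar d,\bar e)$ in $\A$; equivalently, every finite extension of $\bar d\bar f$ in $B_j$ embeds, as a leveled embedding, over $\bar d\bar e$ into $\A$.

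To choose $\bar f$, note that the entries of $\bar e$ lying in or below removed nodes $u_i$ ($i\geq j$) are sent into fresh level-one nodes of $B_j$. Those nodes should have very large finite degree, or be among the $j$ remaining infinite-degree nodes if they are not used by $\bar d$; all other entries of $\bar e$ are kept fixed. The required $\leq_1$ step then follows from the Extension Lemma~\ref{lem:ext} with $A=B_j$ and $B=\A$. Hypothesis (ii) of that lemma holds because the preimages $u_i$ have degree $\omega$ in $\A$, which is exactly the asymmetry exploited in Claim~\ref{claim:pi1transfer}. Hypothesis (i) holds once the parents of all level-two entries are added to the tuples.

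\textbf{Part (ii).} Write $\sigma=\bigvee_i\exists\bar y\,\psi_i(\bar y)$ with $\psi_i\in\Pin{2}$, and fix a witness $\bar a$ in $\A$ for some disjunct. Let $j_\sigma$ exceed every index $i$ with $u_i$ an entry of $\bar a$ or a parent of one, and take $j\geq j_\sigma$. Then $\bar a\subseteq B_j$, and it suffices to show $(\A,\bar a)\leq_2(B_j,\bar a)$: for all $\bar e\in B_j$ there is $\bar f\in\A$ with $(B_j,\bar a\bar e)\leq_1(\A,\bar a\bar f)$. This can be achieved simply with $\bar f=\bar e$, since $B_j$ is an induced substructure of $\A$ containing $\bar a$. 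The needed $\leq_1$ condition is that finite extensions of $\bar a\bar e$ in $\A$ embed over $\bar a\bar e$ into $B_j$. I will get this from the Extension Lemma with $A=\A$ and $B=B_j$: $B_j$ has infinitely many level-one nodes of every finite degree, hence of degree $\geq d$ for each $d$.

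\textbf{Main obstacle.} The delicate step is hypothesis (ii) of the Extension Lemma in part (ii). A named level-one node of $\bar a\bar e$ with degree $\omega$ in $\A$ has the same degree in $B_j$ (it is one of $u_0,\dots,u_{j-1}$), so that case is fine. A named node of finite degree keeps its degree, so it also works. Unnamed parents must be adjoined to $X$ first, and I would check these carefully. If the direct $\leq_1$ verification gets tangled, I would fall back to checking preservation of each $\Pin{2}$ conjunct $\forall\bar z\exists\bar w\theta$ directly. In that approach, given $\bar z$ in $B_j$, I would take $\bar w$ from $\A$ and pull it into $B_j$ via the Extension Lemma, exactly as in Claim~\ref{claim:pi1transfer}.
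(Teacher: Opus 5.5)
Your proof is correct and is essentially the paper's argument recast through Karp's theorem for the back-and-forth relations $\leq_\alpha$. The paper peels the $\Pin{3}$ and $\Sin{3}$ sentences syntactically, but the combinatorial core is the same: in (ii), the identity map plus the Extension Lemma into $B_j$; in (i), relocating the removed roots $u_i$ to fresh nodes of $B_j$ and pulling back into $\A$ via the Extension Lemma, using $\dg(u_i)=\omega$.

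One wording slip in (i): $(\A,\bar d\bar e)\leq_1(B_j,\bar d\bar f)$ says that $\Pi_1$ facts about $(\bar d,\bar e)$ in $\A$ hold of $(\bar d,\bar f)$ in $B_j$, not the reverse as you first wrote. Your embedding reformulation and the direction in which you apply the Extension Lemma are the correct ones.
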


\begin{proof}
Both parts rest on instances of the Extension Lemma~\ref{lem:ext}; note that both $\A$ and every $B_j$ have, for every $d$, infinitely many level-one nodes of degree $\geq d$, so the lemma could possibly be applied in either direction.

(ii) Write $\sigma=\bigvee_i\exists\bar y_i\,\theta_i$ with $\theta_i\in\Pin{2}$. 
Fix a disjunct $\theta$ and witnesses $\bar a\subseteq\A$ with $\A\models\theta(\bar a)$.
As $\bar a$ is finite, $S:=\{i:(\{u_i\}\cup\ch(u_i))\cap\bar a\neq\emptyset\}$ is finite;
put $j_\sigma:=1+\max S$, with $j_\sigma:=0$ if $S=\emptyset$. For $j\geq j_\sigma$ every
$i\in S$ has $i<j$, so $u_i$ and its children survive in $B_j$; hence $\bar a\subseteq B_j$. Fix such a $j$; we check that $B_j\models\theta(\bar a)$, which will imply that $B_j\models\sigma$.

Write $\theta=\bigwedge\forall\bar z\,\chi$ with $\chi\in\Sin{1}$, i.e.\ $\chi=\bigvee\exists\bar w\,(\text{quantifier-free})$. Let $\bar e\subseteq B_j\subseteq\A$ be arbitrary. Since $\A\models\chi(\bar a,\bar e)$, there are quantifier-free witnesses $\bar w\subseteq\A$. Apply the Extension Lemma with $A:=\A$, $B:=B_j$, $E:=\{\rho\}\cup\bar a\cup\bar e\cup\bar w\cup P$, $X:=\{\rho\}\cup\bar a\cup\bar e\cup P$, and $p:=$ the identity, where $P\subseteq B_j$ is the set of parents of the level-two elements of $\bar a\cup\bar e$ (note $P\subseteq B_j$ since $B_j$ removes whole level-one subtrees). Hypothesis (i) of the lemma holds by the inclusion of $P$; hypothesis (ii) is trivial for the identity, since the children of a surviving level-one node are the same in $B_j$ as in $\A$. The resulting $q\colon E\to B_j$ is the identity on $\bar a\bar e$ and plants order-isomorphic witnesses $q(\bar w)$ in $B_j$, so $B_j\models\chi(\bar a,\bar e)$. As $\bar e$ was arbitrary, $B_j\models\theta(\bar a)$.

(i) Write $\pi=\bigwedge\forall\bar y\,\chi$ with $\chi\in\Sin{2}$, true in $\A$. Fix $\bar c\subseteq B_j\subseteq\A$; we must show $B_j\models\chi(\bar c)$. Since $\A\models\chi(\bar c)$, some disjunct $\exists\bar w\,\psi$ with $\psi\in\Pin{1}$ has witnesses $\bar w\subseteq\A$ --- possibly inside removed subtrees. Define a leveled embedding $h$ on $\{\rho\}\cup\bar c\cup\bar w\cup P$ ($P$ the parents, within that set's level-two part, lying in $\A$) into $B_j$ as follows: $h$ relocates each removed root $u_i$ ($i\geq j$) occurring in the domain to a distinct fresh level-one node of $B_j$ of finite degree $\geq|\bar c\cup \bar w|$, relocates the named children of each such $u_i$ to distinct fresh children of its image, and is the identity elsewhere; this is an instance of the Extension Lemma (with $A:=\A$, $B:=B_j$, $X:=\{\rho\}\cup\bar c\cup$ the surviving part, extended over the removed part), or can be checked directly as in Theorem~\ref{thm:notSigma2}.

\begin{claim}
$B_j\models\psi(\bar c,h(\bar w))$.
\end{claim}

\begin{proof}[Proof of Claim]
As in Claim~\ref{claim:pi1transfer}, fix a conjunct $\forall\bar z\,\theta_0$ of $\psi$
($\theta$ quantifier-free) and suppose, for contradiction, that
$B_j\models\neg\theta(\bar c,h(\bar w),\bar e)$ for some $\bar e\subseteq B_j$.
Let $P'$ be the set of $B_j$-parents of the level-two elements of $\bar c\cup h(\bar w)$,
and apply the Extension Lemma~\ref{lem:ext} with
\[
  A:=B_j,\qquad B:=\A,\qquad X:=\{\rho\}\cup\bar c\cup h(\bar w)\cup P',\qquad E:=X\cup\bar e,
\]
and $p\colon X\to\A$ given by $p:=h^{-1}$ on $\operatorname{ran}(h)\cap X$ and $p:=\mathrm{id}$
on $X\setminus\operatorname{ran}(h)$. These agree where both apply ($h$ is the identity off
the relocated nodes) and do not collide ($u_i\notin B_j\supseteq\bar c\cup P'$), so $p$ is a
well-defined leveled embedding; in particular each relocated image $v_i:=h(u_i)\in P'$ is sent
to $u_i$ \emph{(not to itself)}, which is what preserves $\prec$ on the pairs $v_i\prec h(t)$
for $t\in\ch(u_i)\cap\bar w$.

Hypothesis~(i) holds by the choice of $P'$: every level-two element of $X$ lies in
$\bar c\cup h(\bar w)$, and its $B_j$-parent is either a relocated image in $h(\bar w)$ or an
element of $P'$, hence in $X$. Hypothesis~(ii) holds for every level-one $w\in X$: if $w=v_i$
is a relocated image then $p(w)=u_i$ has degree $\omega\ge|E\cap\ch_{B_j}(v_i)|$; otherwise $w$
is a surviving node, $p(w)=w$, and $\dg_\A(w)=\dg_{B_j}(w)=|\ch_{B_j}(w)|\ge|E\cap\ch_{B_j}(w)|$,
since $B_j$ retains all children of surviving level-one nodes.

The lemma yields a leveled embedding $q\colon E\to\A$ extending $p$, so $q(\bar c)=\bar c$ and
$q(h(\bar w))=\bar w$. As $\theta$ is quantifier-free and $q$ preserves and reflects $\prec$
and equality, $\A\models\neg\theta(\bar c,\bar w,q(\bar e))$ --- contradicting the conjunct
$\forall\bar z\,\theta(\bar c,\bar w,\bar z)$ of $\A\models\psi(\bar c,\bar w)$.
\end{proof}
\end{proof}

\begin{theorem}\label{thm:rank2}
$\SSC(\A_2)=\Pin{4}$ and $\SR(\A_2)=\pSR(\A_2)=3$. In detail: $\A_2$ has a $\Pin{4}$ Scott sentence but no $\dSin{3}$ Scott sentence---hence none in $\Sin{3}$, $\Pin{3}$, or (via Theorem~\ref{thm:miller}) $\Sin{4}$.
\end{theorem}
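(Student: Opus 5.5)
The existence of a $\Pin{4}$ Scott sentence is Proposition~\ref{prop:scottfamily}. Everything else reduces to ruling out a $\dSin{3}$ Scott sentence, and for that I will use the approximating trees $B_j=[2:j]$ together with Lemma~\ref{lem:transfers}.

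First I rule out $\dSin{3}$. Suppose $\sigma\wedge\pi$ is a Scott sentence of $\A$ with $\sigma\in\Sin{3}$ and $\pi\in\Pin{3}$. By Lemma~\ref{lem:transfers}(i), $B_j\models\pi$ for every $j$. By Lemma~\ref{lem:transfers}(ii), $B_j\models\sigma$ for all $j\geq j_\sigma$. Hence $B_{j_\sigma}\models\sigma\wedge\pi$. But $B_{j_\sigma}$ is countable and $B_{j_\sigma}\not\cong\A$, since it has only finitely many level-one nodes of infinite degree while $\A$ has infinitely many. This contradicts $\sigma\wedge\pi$ being a Scott sentence. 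Since every $\Sin{3}$ sentence and every $\Pin{3}$ sentence is (trivially equivalent to) a $\dSin{3}$ sentence, the same argument also rules out Scott sentences in $\Sin{3}$ and in $\Pin{3}$. The $\Pin{3}$ case is also covered by Theorem~\ref{thm:SR} with $m=1$.

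Next I handle $\Sin{4}$. Suppose $\A$ had a $\Sin{4}$ Scott sentence. Together with the $\Pin{4}$ sentence, Theorem~\ref{thm:miller} with $\alpha=3+1$ yields a $\dSin{3}$ Scott sentence, which was just excluded. So the least complexity is $\Pin{4}$: every class below it in the ordering ($\Sin{\beta},\Pin{\beta},\dSin{\beta}$ for $\beta\leq3$, which all sit inside $\dSin{3}$) has been excluded, and $\Sin{4}$ is incomparable with $\Pin{4}$ and has also been excluded. This gives $\SSC(\A_2)=\Pin{4}$.

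For the ranks, $\SR(\A_2)=3$ follows from Theorem~\ref{thm:SR} with $m=1$, or directly from Proposition~\ref{prop:scottfamily} together with Theorem~\ref{thm:notSigma2}. The statement $\pSR(\A_2)=3$ then follows from the dictionary of \cite[Table~1]{GR23}, which says that $\SSC=\Pin{\alpha+1}$ implies $\pSR=\SR=\alpha$.

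The only real content is in Lemma~\ref{lem:transfers}, which is already established, so I do not expect a serious obstacle. The one step that needs checking is that $\dSin{3}$ sentences have the conjunctive form $\sigma\wedge\pi$ with $\sigma\in\Sin3$ and $\pi\in\Pin3$, so that the lemma applies to each conjunct separately. This holds by definition.
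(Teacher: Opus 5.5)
Your proof is correct and follows essentially the same route as the paper. You take the $\Pin{4}$ sentence from Proposition~\ref{prop:scottfamily}, exclude $\dSin{3}$ via both halves of Lemma~\ref{lem:transfers} on the approximants $B_j$, use Theorem~\ref{thm:miller} for $\Sin{4}$, and use the \cite[Table~1]{GR23} dictionary for $\pSR$. The only differences are cosmetic: you absorb the $\Sin{3}$ and $\Pin{3}$ cases into the $\dSin{3}$ case by adding a trivial conjunct, and you get $\SR(\A_2)\geq 3$ from Theorem~\ref{thm:SR} or Theorem~\ref{thm:notSigma2}, whereas the paper gets it from the excluded $\Pin{3}$ sentence via Theorem~\ref{thm:montalban}.
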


\begin{proof}
In the ordering $\Sin\beta,\Pin\beta<\dSin\beta<\Sin{\beta+1},\Pin{\beta+1}$ the
classes strictly below $\Pin4$ are exactly the nine classes
$\Sin\beta,\Pin\beta,\dSin\beta$ ($\beta\in\{1,2,3\}$), and the unique class
\emph{incomparable} to $\Pin4$ is $\Sin4$.

\emph{$\A$ has a $\Pin4$ Scott sentence:} Proposition~\ref{prop:scottfamily}.

\emph{No $\Pin3$ Scott sentence:} such a sentence is true in $\A$ and, being $\Pin3$,
true in $B_0$ by Lemma~\ref{lem:transfers}(i); but $B_0\not\cong\A$.

\emph{No Scott sentence in any class included in $\Pin3$:} each of $\Sin\beta,\Pin\beta,
\dSin\beta$ ($\beta\in\{1,2\}$) and $\Pin3$ itself is included in $\Pin3$ --- for
$\dSin2=\Sin2\wedge\Pin2$ use that $\Sin2,\Pin2\subseteq\Pin3$ and $\Pin3$ is closed
under finite conjunction --- so a Scott sentence in such a class is a $\Pin3$ Scott
sentence, excluded above.

\emph{No $\Sin3$ Scott sentence:} by Lemma~\ref{lem:transfers}(ii) it would hold in
$B_j\not\cong\A$ for all $j\geq j_\sigma$.

\emph{No $\dSin3$ Scott sentence:} such a sentence is $\sigma\wedge\pi$ with
$\sigma\in\Sin3$, $\pi\in\Pin3$; if true in $\A$ then $\pi$ holds in every $B_j$ by
Lemma~\ref{lem:transfers}(i) and $\sigma$ holds in $B_j$ for $j\geq j_\sigma$ by
Lemma~\ref{lem:transfers}(ii), so $\sigma\wedge\pi$ holds in $B_j\not\cong\A$ for every
$j\geq j_\sigma$.

\emph{No $\Sin4$ Scott sentence:} a $\Sin4$ Scott sentence together with the $\Pin4$
one above would, by Theorem~\ref{thm:miller} (successor case $\alpha=4$, giving
$\dSin3$), contradict the previous paragraph.

\emph{Conclusion.} Let $C$ be any complexity realized by a Scott sentence of $\A$. If
$C=\Sin4$, or if $C$ is strictly below $\Pin4$, it has just been excluded; every
remaining class satisfies $C\succeq\Pin4$. As $\Pin4$ is itself realized, it is the
least realized complexity: $\SSC(\A_2)=\Pin4$.

By Theorem~\ref{thm:montalban}, $\SR$ is the least $\alpha$ with a $\Pin{\alpha+1}$
Scott sentence: the $\Pin4$ sentence gives $\SR(\A_2)\leq 3$ and the absence of a
$\Pin3$ one gives $\SR(\A_2)\geq 3$, so $\SR(\A_2)=3$. Finally
\cite[Table~1]{GR23} gives $\SSC=\Pin{\alpha+1}\Rightarrow\pSR=\SR=\alpha$ with no
parameters; applied at the successor $\alpha=3$, $\pSR(\A_2)=3$.
\end{proof}

\begin{remark}
Theorem~\ref{thm:rank2} and Lemma~\ref{lem:transfers} also re-prove the lower bound of Theorem~\ref{thm:SR} at $m=1$ with no computability theory at all; the two routes --- relativized categoricity plus the transfer principle, and direct back-and-forth --- agree exactly, as the Montalb\'an correspondence predicts. We expect the same dual structure at every finite rank (Section~\ref{sec:questions}).
\end{remark}

\subsection{Effectivity: computable Scott sentences and index sets}

\begin{proposition}\label{prop:computable-ss}
There is a computable copy $\hat\A$ of $\A_2$ admitting a c.e.\ Scott family of $\Sc{3}$ formulas. Consequently $\A_2$ has a computable $\Pc{4}$ Scott sentence, and the computable Scott sentence complexity of $\A_2$ is exactly $\Pc{4}$.
\end{proposition}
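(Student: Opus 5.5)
We take $\hat\A$ to be the transparent copy $\Scal_\emptyset$ from the case $m=1$ of Theorem~\ref{thm:relhard}. It has root $()$ and level-one nodes $(i)$. The even nodes $(2i)$ have children $(2i,j)$ for all $j$, and the odd nodes $(2\langle i,k\rangle+1)$ have exactly the children $(2\langle i,k\rangle+1,j)$ for $j<i$. In $\hat\A$ the order, the levels, the parent function and the degree function on level-one nodes (with value in $\omega\cup\{\omega\}$) are all computable from the names of the nodes. The plan is to write the Scott family of Proposition~\ref{prop:scottfamily} effectively. To each finite tuple $\bar a$ of $\hat\A$ we computably assign the formula $\varphi_{\bar a}$ from that proof, and we check that it is a computable infinitary formula of complexity $\Sc{3}$.

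\textbf{Effectivity of the building blocks.} The formulas of \S\ref{ssec:blocks} are computable infinitary formulas of the stated complexity, given uniformly by indices. The only infinitary ones are $\dg_{=\omega}=\bigwedge_k\dg_{\geq k}$, which is a computable conjunction of finitary $\Sigma_1$ formulas and hence $\Pc{2}$, and $\mathrm{pardeg}_{=\omega}$, which is therefore $\Sc{3}$. The datum (D1)--(D3) of $\bar a$ is computable from $\bar a$ in $\hat\A$. This includes the degrees of level-one entries, the shared-parent pattern and the degrees of unnamed parents, since in $\hat\A$ the parent and the degree are read off the names. So the map $\bar a\mapsto\varphi_{\bar a}$ is computable, and the family $\{\varphi_{\bar a}\}$ is c.e.\ (indeed computable). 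Each $\varphi_{\bar a}$ is a finite conjunction of $\Sc{3}$ formulas, so it is $\Sc{3}$. Correctness, meaning that it defines the orbit, is exactly Proposition~\ref{prop:scottfamily} together with Lemma~\ref{lem:datum}.

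\textbf{From the family to the sentence.} We invoke the effective version of Montalb\'an's theorem, in the form given by Montalb\'an's book or by \cite{AK00}: a computable structure with a c.e.\ Scott family of $\Sc{\alpha}$ formulas has a computable $\Pc{\alpha+1}$ Scott sentence. Concretely, the sentence is the conjunction of three parts. The first asserts $\exists\bar x\,\varphi_{\emptyset}$ structure, i.e.\ the $\Pc{2}$ rank-$2$ tree axioms. The second consists of the clauses $\forall\bar x\,(\varphi_{\bar a}(\bar x)\to\forall y\bigvee_{b}\varphi_{\bar ab}(\bar x,y))$ and $\forall\bar x\,(\varphi_{\bar a}(\bar x)\to\bigwedge_b\exists y\,\varphi_{\bar a b}(\bar x,y))$. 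The third is $\forall\bar x\bigvee_{\bar a}\varphi_{\bar a}(\bar x)$. Each clause is $\Pc{4}$ because the $\varphi$ are $\Sc{3}$, and the conjunctions and disjunctions are computable because the family is c.e.

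\textbf{Exactness.} A computable Scott sentence of complexity $\Sc{\beta}$, $\Pc{\beta}$ or $\mathrm{d}$-$\Sc{\beta}$ is in particular an $L_{\omega_1\omega}$ Scott sentence of the corresponding boldface class. So Theorem~\ref{thm:rank2} rules out every computable class below $\Pc{4}$, and also $\Sc{4}$. Combined with the previous step, the computable Scott sentence complexity is exactly $\Pc{4}$.

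The main obstacle is the middle step. One must make sure the back-and-forth Scott sentence built from the family stays at $\Pc{4}$ and does not climb to a higher level. The key is the placement of the $\Sc{3}$ formulas: they appear only under $\forall$ inside implications, and on the right inside $\forall y\bigvee$ and $\bigwedge\exists$, which stay within $\Pc{4}$. The negation $\neg\varphi_{\bar a}$ in the antecedent is $\Pc{3}$, which is also acceptable. If quoting the effective Montalb\'an theorem is unsatisfactory, we verify this clause by clause directly.
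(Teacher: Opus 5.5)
Your proposal is correct and is essentially the paper's proof. You compute the datum-based formulas $\varphi_{\bar a}$ uniformly in a copy where parents and degrees are read off the names, and feed the resulting computable parameterless $\Sc{3}$ Scott family into the effective family-to-sentence construction; the paper cites \cite[Prop.~2.9]{AKM20} for this step and writes exactly your back-and-forth clauses. Optimality then follows as in the paper, since a computable sentence of lower complexity would be a boldface $\dSin{3}$ or $\Sin{4}$ Scott sentence, excluded by Theorem~\ref{thm:rank2}.

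The differences are cosmetic. You reuse $\Scal_\emptyset$ rather than building a fresh copy. Your first conjunct (the tree axioms) and your third clause $\forall\bar x\bigvee_{\bar a}\varphi_{\bar a}(\bar x)$ are redundant, because the quantifier-free diagrams inside the $\varphi_{\bar a}$ already make the back-and-forth clauses force isomorphism.
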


\begin{proof}
Build $\hat\A$ with computable structural data: universe $\omega$, root $0$;
level-one nodes $a_{k,j}$ ($k,j\in\omega$) and $b_j$ ($j\in\omega$) under a fixed
computable coding; children $a_{k,j,1},\dots,a_{k,j,k}$ of $a_{k,j}$ and
$b_{j,i}$ ($i\in\omega$) of $b_j$. Then $a_{k,j}$ has degree $k$ and $b_j$ has
degree $\omega$, each degree occurring infinitely often, so $\hat\A\cong[2:\omega]=\A_2$.

\emph{A uniformly computable Scott family.} From the code of a tuple $\bar a$ one
computes its level, degree, parenthood, and shared-parent data --- the datum of
Lemma~\ref{lem:datum} --- and hence (Proposition~\ref{prop:scottfamily}) an index
for the formula $\varphi_{\bar a}$. This map is total computable, with
$\dg_{=\omega}=\bigwedge_k\dg_{\ge k}$ and $\mathrm{pardeg}_{=\omega}=\exists w(\dots\wedge\dg_{=\omega}(w))$
furnished as computable infinitary $\Pc2$ and $\Sc3$ formulas. As each
$\varphi_{\bar a}$ is $\Sc3$ (Proposition~\ref{prop:scottfamily}) and occurs with no
parameters, $\{\varphi_{\bar a}\}$ is a uniformly computable --- in particular c.e.\ ---
parameterless $\Sc3$ Scott family for $\hat\A$.

\emph{A computable $\Pc4$ Scott sentence.} By \cite[Prop.~2.9]{AKM20}, such a family
yields a computable $\Pc4$ Scott sentence: the conjunction of $\varphi_\emptyset$ with
the sentences $\forall\bar x\,[\varphi_{\bar a}(\bar x)\to(\forall y\bigvee_b\varphi_{\bar a b}(\bar x,y)\wedge\bigwedge_b\exists y\,\varphi_{\bar a b}(\bar x,y))]$,
which is $\Pc4$ since each $\varphi_{\bar a b}\in\Sc3$.

\emph{Optimality, and that it is a property of $\A_2$.} Whether the isomorphism type
$\A_2$ admits a computable Scott sentence of a given complexity does not depend on
the chosen computable copy: a Scott sentence characterizes the type among countable
structures, and computability of an infinitary sentence is a syntactic property of
the sentence. The copy $\hat\A$ witnesses a computable $\Pc4$ Scott sentence. For the
lower bound, a computable Scott sentence is in particular a lightface, hence boldface,
Scott sentence of the same class; every complexity properly below $\Pc4$ is contained
in $\dSin3$, and the only sibling $\Sc4$ embeds in $\Sin4$, so any computable Scott
sentence beating $\Pc4$ would furnish a boldface $\dSin3$ or $\Sin4$ Scott sentence,
both excluded by Theorem~\ref{thm:rank2}. Hence $\A_2$ has computable Scott sentence
complexity exactly $\Pc4$, matching its boldface value $\SSC(\A_2)=\Pin4$.
\end{proof}

\begin{remark}
The boldface and computable optimal complexities thus coincide for $\A_2$. This is not automatic: Alvir, Knight, and McCoy \cite{AKM20} constructed a computable structure with a $\Pin{2}$ Scott sentence but no computable $\Pc{2}$ Scott sentence, and Alvir, Csima, and Harrison-Trainor \cite{ACH25} showed the generic gap (a $\Pin{\alpha}$ Scott sentence yields a computable $\Pc{2\alpha}$ one) is sharp in general. For the trees considered here the exact match is delivered by the c.e.\ Scott family, i.e.\ by the homogeneity of the types $[n:N]$, just as in the effectivity remarks of \cite{GR23} for linear orders.
\end{remark}

\begin{corollary}\label{cor:indexsets}
The isomorphism class $\{\B\in\Mod(\{\prec\}):\B\cong\A_2\}$ is (boldface) $\bPi{4}$ and not (boldface) $\bSig{4}$. Lightface, the index set $\{e: \mathcal{M}_e\cong\A_2\}$ (over any computable enumeration of computable $\{\prec\}$-structures) is $\Pi^0_4$-complete.
\end{corollary}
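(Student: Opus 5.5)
The corollary has three parts: the boldface upper bound, the boldface lower bound, and lightface completeness. I would prove them in that order, each as a consequence of Theorem~\ref{thm:rank2} and Proposition~\ref{prop:computable-ss}.

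\emph{Boldface upper bound.} By Proposition~\ref{prop:scottfamily}, $\A_2$ has a $\Pin{4}$ Scott sentence. By Theorem~\ref{thm:montalban}((1)$\Rightarrow$(3)), or directly by the Lopez-Escobar correspondence between $\Pin{\alpha}$ sentences and $\bPi{\alpha}$ invariant sets, the isomorphism class is then $\bPi{4}$.

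\emph{Boldface lower bound.} I would use the other half of Lopez-Escobar: an isomorphism-invariant $\bSig{4}$ subset of $\Mod(\{\prec\})$ is the model set of a $\Sin{4}$ sentence (Vaught's transform). If the isomorphism class were $\bSig{4}$, $\A_2$ would therefore have a $\Sin{4}$ Scott sentence. Theorem~\ref{thm:rank2} excludes exactly this, via Miller's Theorem~\ref{thm:miller} together with the absence of a $\dSin{3}$ sentence. So the class is not $\bSig{4}$.

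\emph{Lightface upper bound.} Proposition~\ref{prop:computable-ss} gives a computable $\Pc{4}$ Scott sentence $\Phi$. Evaluating $\Phi$ uniformly on $\mathcal{M}_e$ (the relativized evaluation bound of \cite[Ch.~7]{AK00} recalled in the preliminaries, applied to sentences) gives $\{e:\mathcal{M}_e\models\Phi\}\in\Pi^0_4$. I would add the usual caveat that ``$\mathcal{M}_e$ is a total structure'' is $\Pi^0_2$, which is absorbed.

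\emph{Lightface hardness.} This is the main obstacle, since the boldface non-$\bSig{4}$ result does not transfer to $m$-reductions for free. The plan is to give a uniform reduction from a $\Pi^0_4$-complete set. Write $x\in P$ iff $\forall y\,(\langle x,y\rangle\in Q)$ with $Q\in\Sigma^0_3$, so $\overline{Q}\in\Pi^0_3$. Given $x$, I would uniformly build a computable rank-2 tree as follows:
\begin{itemize}
\item one ``$\Sigma_3$-gadget'' for each $y$, built from a uniformly computable family of level-one nodes whose degrees are controlled by the $\Pi^0_2$-coding of $\mathrm{Inf}$ from Theorem~\ref{thm:relhard} (case $m=1$, unrelativized);
\item a computable background copy supplying every finite degree infinitely often.
\end{itemize}
The intended effect is that the number of infinite-degree level-one nodes is infinite iff $\forall y\,Q(x,y)$, and some fixed finite $j$ otherwise. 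This makes the tree $\cong\A_2=[2:\omega]$ iff $x\in P$, and $\cong B_j$ otherwise.

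Concretely, I would use the standard fact that $\{e: W_e \text{ cofinite}\}$ is $\Sigma^0_3$-complete. For each $y$, the $y$-th gadget should contribute an infinite-degree node exactly when $Q(x,y)$ holds. The difficulty is converting a $\Sigma_3$ condition into ``exactly one infinite-degree node'' rather than ``some number of infinite-degree nodes.''

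The fallback I would try is more robust: let the $y$-th gadget produce infinitely many infinite-degree nodes iff $Q(x,y)$, and finitely many otherwise. This is again a $\Sigma_3$/$\Pi_3$ coding, via $\mathrm{Cof}$ uniformly giving nodes $(i)$ of degree $\omega$ iff $W_{g(i)}$ is infinite. Infinitely many infinite-degree nodes overall then corresponds to ``$\exists^\infty$'' over $y$, which is not what we want.

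So instead I would use the Mah19 isomorphism-problem result directly: \cite{Mah19} shows the isomorphism problem for computable trees of rank $2$ is $\Pi^0_4$-complete. I expect the reduction there (the case $\alpha=2$) to be built exactly into the types $[2:N]$, yielding trees isomorphic to $[2:\omega]$ versus $[2:j]$. I would cite it and check that the target type matches $\A_2$. This verification is where I expect the real work to be.
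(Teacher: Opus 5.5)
Your proposal is correct. The boldface parts match the paper's proof. The hardness step also matches: the paper cites \cite[Lem.~3.2]{Mah19}, restated as item~(c) in the proof of Theorem~\ref{thm:relhard} (take $Z=\emptyset$, $k=1$). For a $\Sigma^0_4$-complete $D$, that lemma gives uniformly computable rank-$2$ trees $\T^x$ with $\T^x\cong[2:\omega]$ iff $x\notin D$, so $x\mapsto\T^x$ reduces the $\Pi^0_4$-complete complement of $D$ to the index set. The ``verification'' you defer is therefore exactly the statement of that lemma, and your abandoned gadget constructions are unnecessary. What delivers the index set is this single-structure form of the lemma, not the $\Pi^0_4$-completeness of the isomorphism problem for pairs.

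The one real divergence is the lightface upper bound. You evaluate the computable $\Pc{4}$ Scott sentence of Proposition~\ref{prop:computable-ss} uniformly in $e$. The paper instead checks the rank-$2$ tree axioms (a $\Pi^0_3$ condition) and compares with a fixed computable copy via the uniform $\Pi^0_4$ isomorphism bound of \cite[Lem.~2.1]{Mah19}. Your route needs no separate axiom check, since a Scott sentence pins down $\A_2$ among all countable structures, but it depends on the effective Scott family and \cite[Prop.~2.9]{AKM20}. The paper's route uses only the quantifier count from \cite{Mah19} and is independent of that effectivity result.
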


\begin{proof}
Boldface: $\bPi{4}$ is Theorem~\ref{thm:montalban}(3) with $\alpha=3$, since $\A_2$ has a
$\Pin{4}$ Scott sentence (Proposition~\ref{prop:scottfamily}). If the class were $\bSig{4}$,
then by the level-by-level L\'opez-Escobar/Vaught theorem \cite{Vau74} (invariant
$\bSig{4}$ classes are exactly the $\Sin{4}$-axiomatizable ones) $\A_2$ would have a
$\Sin{4}$ Scott sentence, contradicting Theorem~\ref{thm:rank2}.

Lightface: membership is $\Pi^0_4$. Verifying that $\mathcal M_e$ is a rank-$2$ tree
(strict partial order, finite linearly ordered predecessor sets, a root, no chain of
length $>2$) is $\Pi^0_3$, and against a fixed computable copy of $\A_2$ ``$\mathcal M_e
\cong \A_2$'' is $\Pi^0_4$ by \cite[Lem.~2.1]{Mah19} (isomorphism of rank-$2$ trees is
$\Pi^0_4$, uniformly); the conjunction is $\Pi^0_4$, the binding cost being the condition
that level-one nodes of infinite degree occur infinitely often. Hardness is
\cite[Lem.~3.2]{Mah19} at $n=1$: it sends any $\Sigma^0_4$ set $D$ to a uniformly
computable sequence $(\T^x)$ of rank-$2$ trees with $\T^x\cong[2:\omega]$ iff $x\notin D$;
taking $D$ a $\Sigma^0_4$-complete set, $x\mapsto\T^x$ reduces the $\Pi^0_4$-complete set
$\overline D$ to the index set. Hence the index set is $\Pi^0_4$-complete.
\end{proof}

\section{The general case: $\SSC(\A_{m+1})=\Pin{2m+2}$}\label{sec:general}

The rank-$2$ analysis of Section~\ref{sec:ranktwo} pins $\SSC(\A_2)=\Pin{4}$ by a back-and-forth argument tailored to rank $2$. We now prove the general statement by a different and shorter route, due to the second author, which bypasses the substitution lemma of Section~\ref{sec:questions}. The idea is that the relativized coding of Section~\ref{sec:relhard} is robust under naming parameters: a finite tuple meets only finitely many top-level subtrees of $\A_{m+1}$, and after reserving those there remain infinitely many spare top-level subtrees of every relevant type on which to run the coding. This yields $\SR(\A_{m+1},\bar a)=2m+1$ for \emph{every} finite tuple $\bar a$, hence $\pSR(\A_{m+1})=2m+1$, which the dictionary of \cite[Table~1]{GR23} converts into $\SSC(\A_{m+1})=\Pin{2m+2}$.

Throughout, $(\A_{m+1},\bar a)$ denotes the expansion of $\A_{m+1}$ by constants naming the entries of a finite tuple $\bar a$ (repeats allowed). Recall from Remark~\ref{rem:coding-orbit} that the coding rides on the orbit
\[
O^{m}_\omega=\{u:\Lev(u)=1 \text{ and } T[u]\cong[m:\omega]\},
\]
which is $\Pi^0_{2m}(Z)$-complete in the coding copy of Theorem~\ref{thm:relhard} and $Z$-computably marked --- by parity of the level-one root --- in the transparent copy.

\begin{definition}\label{def:support}
For a finite tuple $\bar a$ from $\A_{m+1}$, the \emph{top-level support} of $\bar a$ is the finite set
\[
\mathrm{supp}(\bar a)=\{u:\Lev(u)=1 \text{ and some coordinate of }\bar a\text{ lies in }\A_{m+1}[u]\}.
\]
A coordinate equal to the root contributes nothing: the root is fixed by every automorphism. For $u\in\mathrm{supp}(\bar a)$ let $\bar a_u$ be the subtuple of coordinates lying in $\A_{m+1}[u]$; the \emph{pointed top-level component} at $u$ is $(\A_{m+1}[u],\bar a_u)$.
\end{definition}

\begin{lemma}[Reservation lemma]\label{lem:reserve}
Fix finite $m\geq 1$ and a finite tuple $\bar a\in\A_{m+1}^{<\omega}$. For every oracle $Z\subseteq\omega$ there are $Z$-computable copies $\B_Z(\bar a),\Ccal_Z(\bar a)\cong(\A_{m+1},\bar a)$ of the following form.
\begin{enumerate}
\item Finitely many top-level subtrees are \emph{reserved} in each copy; the reserved subtrees realize exactly the pointed top-level components $(\A_{m+1}[u],\bar a_u)$ for $u\in\mathrm{supp}(\bar a)$, and the constants naming $\bar a$ are interpreted inside them (a coordinate equal to the root is interpreted as the root).
\item Outside the reserved subtrees, the \emph{ordinary} part of each copy is the relativized coding construction of Theorem~\ref{thm:relhard}: in $\B_Z(\bar a)$ the ordinary designated level-one roots code a set Turing-equivalent to $Z^{(2m)}$, and in $\Ccal_Z(\bar a)$ membership of an ordinary level-one root in $O^{m}_\omega$ is $Z$-computably marked.
\end{enumerate}
\end{lemma}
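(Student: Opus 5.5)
The plan is to build both copies by gluing a finite, hard-coded reserved part onto the construction of Theorem~\ref{thm:relhard}, and then to check that the glued structures are isomorphic to $(\A_{m+1},\bar a)$.

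\textbf{Reserved part.} Each pointed component $(\A_{m+1}[u],\bar a_u)$ for $u\in\mathrm{supp}(\bar a)$ is a tree of type $[m:N_u]$ with finitely many named nodes, for some $N_u\in\omega\cup\{\omega\}$. I will show by induction on $m$ that every type $[k:N]$ has a computable copy, uniformly in $k$ and $N$. For $k=1$ this is immediate. For the inductive step, take level-one nodes indexed by $\omega$ and hang on them computable copies of $[k-1:\omega]$ and of each $[k-1:j]$, with the required multiplicities, by a fixed computable bookkeeping. Inside such a copy the finitely many named coordinates can be hard-coded as constants. Each reserved subtree is therefore computable, hence $Z$-computable, and there are only finitely many of them.

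\textbf{Ordinary part.} I take the level-one nodes of $\B_Z(\bar a)$ to be the reserved roots $r_1,\dots,r_s$ followed by the nodes of $\T_Z$, placed verbatim from Theorem~\ref{thm:relhard}. Likewise $\Ccal_Z(\bar a)$ consists of the same reserved roots followed by $\Scal_Z$. The coding and marking properties in clause (2) then hold exactly as in Theorem~\ref{thm:relhard} and Remark~\ref{rem:coding-orbit}. The even ordinary roots of $\B_Z(\bar a)$ code $Z^{(2m)}$, or $\mathrm{Inf}^Z$ when $m=1$. In $\Ccal_Z(\bar a)$, parity marks membership in $O^m_\omega$. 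Relabelling the universe by a computable shift keeps everything $Z$-computable, including the successor relation.

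\textbf{Isomorphism check, the main step.} I must show $\B_Z(\bar a)\cong(\A_{m+1},\bar a)\cong\Ccal_Z(\bar a)$. In $\A_{m+1}$, the complement of $\mathrm{supp}(\bar a)$ is obtained by deleting finitely many top-level subtrees. Since each type $[m:N]$ occurs infinitely often among top-level subtrees of $\A_{m+1}$, the complement still has infinitely many top-level subtrees of type $[m:\omega]$ and of each $[m:j]$, and no others. Hence, as an unpointed forest of top-level subtrees, it is isomorphic to the forest of top-level subtrees of $\T_Z$, and also to that of $\Scal_Z$, both of which are $[m+1:\omega]$ by Theorem~\ref{thm:relhard}(2).

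Matching the reserved subtrees componentwise with the identity on named constants, and matching the ordinary forests by that isomorphism, gives an isomorphism of pointed structures that fixes the root. The step needing the most care is that $\mathrm{supp}(\bar a)$ really partitions the named coordinates. Every non-root coordinate lies below a unique level-one node, so the pointed components determine $(\A_{m+1},\bar a)$ together with the unpointed remainder. This relies on subtrees below distinct level-one nodes being disjoint and incomparable, which holds because $\A_{m+1}$ is a tree.
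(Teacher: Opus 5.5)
Your proposal is correct and follows the paper's proof. You reserve computable copies of the finitely many pointed components $(\A_{m+1}[u],\bar a_u)$, attach the top-level subtrees of $\T_Z$ (resp.\ $\Scal_Z$) as the ordinary part, and conclude by noting that adding or deleting finitely many top-level subtrees leaves the type $[m+1:\omega]$ unchanged. You fill in details the paper leaves implicit: the induction giving uniformly computable copies of the types $[k:N]$, and the explicit componentwise matching of the pointed structures.
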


\begin{proof}
For each $u\in\mathrm{supp}(\bar a)$ reserve one fresh top-level subtree in each copy and make it a computable copy of the pointed component $(\A_{m+1}[u],\bar a_u)$. This is possible because every type $[r:N]$ with $r\leq m$ and $N\in\omega\cup\{\omega\}$ has a computable presentation (Definition~\ref{def:types}), and a finite pointed configuration inside such a type is realized in a computable copy by naming computable representatives of the finitely many coordinates involved. Outside the reserved subtrees, attach the ordinary top-level subtrees of the copies $\T_Z,\Scal_Z$ of Theorem~\ref{thm:relhard}.

Since $\A_{m+1}=[m+1:\omega]$ has infinitely many top-level subtrees of type $[m:\omega]$ and infinitely many of type $[m:k]$ for every finite $k$, deleting the finitely many reserved ones leaves the same isomorphism type; so the underlying tree of each copy is again $\A_{m+1}$, and interpreting the constants inside the reserved pieces makes each copy a copy of $(\A_{m+1},\bar a)$. The reserved part is computable outright and the ordinary part is $Z$-computable by Theorem~\ref{thm:relhard}, so each copy is $Z$-computable. Clauses (1) and (2) hold by construction, the marking in $\Ccal_Z(\bar a)$ being the parity convention of Theorem~\ref{thm:relhard} applied to the ordinary roots.
\end{proof}

\begin{theorem}[Parameterized relativized coding]\label{thm:paramcoding}
Let $m\geq 1$ be finite and $\bar a\in\A_{m+1}^{<\omega}$. For every oracle $Z\subseteq\omega$ there are $Z$-computable copies $\B_Z(\bar a),\Ccal_Z(\bar a)\cong(\A_{m+1},\bar a)$ such that every isomorphism $f\colon\B_Z(\bar a)\to\Ccal_Z(\bar a)$ satisfies $Z^{(2m)}\leT f$.
\end{theorem}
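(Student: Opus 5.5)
The copies will be the ones built in the Reservation Lemma~\ref{lem:reserve}. They are already $Z$-computable and isomorphic to $(\A_{m+1},\bar a)$, so the only work is to show that every isomorphism $f\colon\B_Z(\bar a)\to\Ccal_Z(\bar a)$ computes $Z^{(2m)}$. The plan follows the parity-decoding of Theorem~\ref{thm:relhard}, after checking that the reserved part cannot interfere. In $\B_Z(\bar a)$, write the ordinary designated level-one roots as $(2x)$ with respect to a $Z$-computable enumeration of the ordinary part. For these roots, membership in $O^m_\omega$ holds exactly when $x\notin Z^{(2m)}$, by the relativized \cite[Lem.~3.2]{Mah19} of item~(c).

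First, observe that $f$ preserves level, and therefore maps level-one nodes to level-one nodes. Since $f$ also preserves the subtree below a node, it preserves membership in $O^m_\omega$. It also fixes the constants, so it must send the reserved top-level subtrees of $\B_Z(\bar a)$ onto the reserved ones of $\Ccal_Z(\bar a)$. Here there is a subtle point: a reserved component $u\in\mathrm{supp}(\bar a)$ whose only named coordinate is $u$ itself is still pinned by the constant, so a naive ``the reservation is permuted'' argument is not needed. Every reserved root in either copy contains some named coordinate, and an isomorphism of the expanded structures must map the subtree containing $a_i^{\B}$ to the subtree containing $a_i^{\Ccal}$. Hence $f$ restricts to a bijection between ordinary roots and ordinary roots.

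Second, decode. For each $x$, $Z^{(2m)}$ is computable from $f$ via
\[
x\in Z^{(2m)} \iff \B_Z(\bar a)[(2x)]\not\cong[m:\omega] \iff f((2x))\notin O^m_\omega.
\]
Because $f((2x))$ is ordinary, membership in $O^m_\omega$ is decided by the $Z$-computable parity marking in $\Ccal_Z(\bar a)$, so the right-hand side is $Z$-computable from $f$. This gives $Z^{(2m)}\leT Z\oplus f$.

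The main obstacle is removing the $Z$. In Theorem~\ref{thm:relhard} parity was read directly off $f$'s output, with no oracle. Here, too, I will arrange the presentation of $\Ccal_Z(\bar a)$ so that the reserved roots occupy a fixed finite set of codes, which is computable, and the ordinary roots keep the parity convention: even codes root $[m:\omega]$ and odd codes root finite types. The reserved codes are a finite exception list, so the test ``$f((2x))$ is odd and not reserved'' is computable from $f$ alone, and $Z^{(2m)}\leT f$. Similarly, the designated nodes $(2x)$ in $\B_Z(\bar a)$ should be given fixed computable codes, offset past the finitely many reserved codes, so that locating them requires no oracle.

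If that bookkeeping fails, the fallback is that $f$, as a map between $Z$-computable structures, already computes only $Z^{(2m)}\oplus$ something. Even then, $Z^{(2m)}\leT Z\oplus f$ still suffices for the intended application. In the transfer principle one takes $Z\geq_T X_0$, and all that is used is that no isomorphism is $Z^{(2m-1)}$-computable, which follows because $Z^{(2m-1)}\geq_T Z$.
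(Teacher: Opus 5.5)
Your proposal is correct and follows essentially the paper's route: take the Reservation Lemma copies, use the constants (the level-one ancestor of each named coordinate) to force $f$ to send ordinary level-one roots to ordinary ones, and then decode $Z^{(2m)}$ by the parity test on $f((2x))$. Your explicit bookkeeping --- fixed computable codes for the reserved roots and for the designated roots $(2x)$ --- sharpens a point the paper glosses: the paper only calls the marking ``$Z$-computable'' and passes directly to $Z^{(2m)}\leT f$, whereas your arrangement makes the parity test genuinely oracle-free.
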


\begin{proof}
Take the copies of Lemma~\ref{lem:reserve}; let $R_\B,R_\Ccal$ be their finite sets of reserved top-level roots. Let $r_x$ be an ordinary designated coding root of $\B_Z(\bar a)$, so $r_x\notin R_\B$.

\emph{An isomorphism $f$ cannot send $r_x$ into the reserved region of $\Ccal_Z(\bar a)$.} Suppose $f(r_x)=s$ with $s\in R_\Ccal$. The reserved subtree at $s$ contains a named coordinate $a^\Ccal_i$; as $f$ respects the constants, $f(a^\B_i)=a^\Ccal_i$, where $a^\B_i$ is the corresponding coordinate of $\B_Z(\bar a)$. Now $a^\Ccal_i\preceq s=f(r_x)$ or $a^\Ccal_i=s=f(r_x)$, and an isomorphism reflects $\preceq$, so $a^\B_i\preceq r_x$ or $a^\B_i=r_x$. But $a^\B_i$ lies in a reserved subtree while $r_x$ is an ordinary root, and distinct top-level subtrees share only the global root; the order relation $a^\B_i\preceq r_x$ is therefore impossible, while $a^\B_i=r_x$ contradicts $a^\B_i\in R_\B$, $r_x\notin R_\B$. Hence $f(r_x)$ is an \emph{ordinary} level-one root of $\Ccal_Z(\bar a)$.

\emph{Decoding.} Since $f$ is an isomorphism,
\[
\B_Z(\bar a)[r_x]\cong[m:\omega]\iff \Ccal_Z(\bar a)[f(r_x)]\cong[m:\omega],
\]
that is, $r_x\in O^{m}_\omega\iff f(r_x)\in O^{m}_\omega$. By the previous paragraph $f(r_x)$ is ordinary, and there membership in $O^{m}_\omega$ is $Z$-computably marked (Lemma~\ref{lem:reserve}(2)); so $f$ computes the coding set carried by the roots $r_x$. By Theorem~\ref{thm:relhard} that set is $\mathrm{Inf}^Z$ for $m=1$ and the complement of $Z^{(2m)}$ for $m\geq 2$, each $\Pi^0_{2m}(Z)$-complete under $\leq_1$; hence $Z^{(2m)}\leT f$.
\end{proof}

\begin{proposition}[Transfer with finite parameters]\label{prop:transfer-param}
Let $\mathcal M$ be a countable structure in a relational language, expanded by finitely many constants, and let $n\geq 1$ be finite. Suppose that for every $Z\subseteq\omega$ there are $Z$-computable copies $\mathcal M^0_Z,\mathcal M^1_Z\cong\mathcal M$ between which no isomorphism is $Z^{(n-1)}$-computable. Then $\SR(\mathcal M)\geq n+1$.
\end{proposition}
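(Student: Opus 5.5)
The plan is to reduce to Proposition~\ref{prop:transfer} by treating constants as part of the language. The only point to check is that the proof of that proposition used relationality of the language in just one place: tuples satisfying the invariant $(\ast)$ realize the same atomic formulas, so a union of such partial maps is an isomorphism. I will redo that argument for the language $L\cup\{c_1,\dots,c_r\}$. Alternatively, I can replace each constant $c_i$ by a unary predicate $P_i$ interpreted as $\{c_i^{\mathcal M}\}$. This yields a relational structure $\mathcal M^\ast$ whose copies correspond effectively to copies of $\mathcal M$, in both directions and uniformly: from a $Z$-computable copy with constants, $P_i$ is $Z$-computable, and conversely $c_i$ is a fixed finite datum. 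Isomorphisms are the same maps, and automorphism orbits (and their $\Sin{n}$-definability) coincide, since the formula $c_i = x$ and $P_i(x)$ are interdefinable quantifier-free.

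Under the unary-predicate translation, the hypothesis transfers verbatim. For every $Z$, the copies $\mathcal M^0_Z,\mathcal M^1_Z$ give $Z$-computable copies of $\mathcal M^\ast$ with no $Z^{(n-1)}$-computable isomorphism. Proposition~\ref{prop:transfer} then applies to $\mathcal M^\ast$, so some orbit of a finite tuple of $\mathcal M^\ast$ is not $\Sin{n}$-definable. Hence, by Theorem~\ref{thm:montalban}, $\mathcal M^\ast$ has no $\Pin{n+1}$ Scott sentence, and $\SR(\mathcal M^\ast)\geq n+1$.

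To transfer back, suppose every orbit of $\mathcal M$ were $\Sin{n}$-definable. Substituting $P_i(x)$ for each occurrence of $x=c_i$ (after unnesting terms, so constants appear only in atomic formulas of the form $x=c_i$ or relations applied to variables) yields $\Sin{n}$ definitions in $\mathcal M^\ast$. Unnesting costs one existential quantifier, or equivalently one universal quantifier, bounded by $P_i$. This stays in $\Sin{n}$ for $n\geq1$, since it can be absorbed as a finite conjunction or disjunction at the atomic level: replace $R(\dots c_i\dots)$ by $\exists y\,(P_i(y)\wedge R(\dots y\dots))$, which is $\Sin1$, or dually by a $\Pin1$ form. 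Choose whichever polarity is needed inside the normal form. Since $\mathcal M$ and $\mathcal M^\ast$ have the same orbits, this contradicts the previous paragraph, and so $\SR(\mathcal M)\geq n+1$.

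The main obstacle is exactly this bookkeeping: ensuring the translation of atomic formulas involving constants does not raise the quantifier level when $n=1$. For $n=1$ I need atomic formulas to be both $\Sin1$ and $\Pin1$, i.e.\ in $\Delta_1$. The translation $R(\dots c_i\dots)$ is equivalent in $\mathcal M^\ast$ both to $\exists y(P_i(y)\wedge R(\dots y\dots))$ and to $\forall y(P_i(y)\to R(\dots y\dots))$, since $P_i$ is a singleton in every copy. So each literal can be replaced in the positive or the negative form as needed. For the cleanest writeup, I would instead rerun the Claim in Proposition~\ref{prop:transfer} directly for $\mathcal M$: start the chain with the partial map $c_i^{\mathcal M^0_Z}\mapsto c_i^{\mathcal M^1_Z}$, which satisfies $(\ast)$ by the isomorphism type. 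After that, every extension preserves constants automatically, and the same jump count gives the contradiction with $Z:=X_0$.
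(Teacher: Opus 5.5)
Your proposal is correct and follows the paper's route. The paper likewise identifies the finitely many constants with unary predicates naming singletons, reruns the back-and-forth of Claim~\ref{claim:bf} verbatim, and takes $Z:=X_0$ to contradict the hypothesis; your unnesting check, that a $\Sin{n}$ orbit definition using the constants becomes a $\Sin{n}$ definition over the predicates $P_i$ for every $n\geq 1$, correctly fills in a step the paper leaves implicit.
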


\begin{proof}
This is Proposition~\ref{prop:transfer} for the finite expanded language. Adjoining finitely many constants is, for the back-and-forth of Claim~\ref{claim:bf}, the same as adjoining finitely many unary predicates naming singletons: atomic diagrams of the copies remain decidable, and the construction of Claim~\ref{claim:bf} only ever \emph{finds} $\Pi_{n-1}$ witnesses, never deciding $\Sin{n}$ truth, so it goes through verbatim with the constants present. A putative parameterless $\Sin{n}$ Scott family for the expansion, coded by a real $X_0$, would by that argument yield, between any two copies, an isomorphism computable in $X_0^{(n-1)}$; taking $Z=X_0$ contradicts the hypothesis. So the expansion has no parameterless $\Sin{n}$ Scott family, and by Theorem~\ref{thm:montalban}, $\SR(\mathcal M)\geq n+1$.
\end{proof}

\begin{theorem}\label{thm:paramSR}
For every finite $m\geq 1$ and every finite tuple $\bar a\in\A_{m+1}^{<\omega}$,
\[
\SR(\A_{m+1},\bar a)=2m+1.
\]
\end{theorem}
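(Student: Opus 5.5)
The equality splits into two inequalities, and I would prove each by reusing the machinery already developed, now relativized to the expansion $(\A_{m+1},\bar a)$.

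\emph{Lower bound.} For the lower bound, combine Theorem~\ref{thm:paramcoding} with Proposition~\ref{prop:transfer-param}. Fix $\bar a$ and put $n:=2m$. For every oracle $Z$, Theorem~\ref{thm:paramcoding} supplies $Z$-computable copies $\B_Z(\bar a),\Ccal_Z(\bar a)$ of $(\A_{m+1},\bar a)$ such that every isomorphism between them computes $Z^{(2m)}$. Since $Z^{(2m)}\not\leT Z^{(2m-1)}$, no isomorphism between these copies is $Z^{(2m-1)}=Z^{(n-1)}$-computable. Proposition~\ref{prop:transfer-param}, applied with $\mathcal M=(\A_{m+1},\bar a)$, then gives $\SR(\A_{m+1},\bar a)\geq 2m+1$.

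\emph{Upper bound.} For the upper bound, I would relativize Lemma~\ref{lem:relcat} to the expansion by constants. Let $X,Y$ be arbitrary copies of $(\A_{m+1},\bar a)$. The isomorphism queries ``$X[u]\cong Y[v]$'' are $\Pi^0_{2m}(X\oplus Y)$, uniformly in $u,v$. The same holds for pointed subtrees: the condition ``$(X[u],\bar a_u^X)\cong(Y[v],\bar a_v^Y)$'' is $\Pi^0_{2m}$ in $X\oplus Y$. The reason is that a pointed component of rank at most $m$ decomposes, below each named node, into finitely many named paths together with unnamed sibling subtrees of lower rank. The count-matching conditions on those unnamed subtrees have the same quantifier complexity as in the unpointed case of \cite[Lem.~2.1]{Mah19}.

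With this in hand I would run the back-and-forth of \cite[Prop.~2.2]{Mah19} with oracle $(X\oplus Y)^{(2m)}$. It starts from the partial map sending the constants to the constants, which is a legitimate partial isomorphism because the constants are interpreted identically in both copies up to the automorphism type of $\bar a$. At each step one extends using pointed-subtree isomorphism queries. The result is an isomorphism $f\leT(X\oplus Y)^{(2m)}$, uniformly, with no real parameter. This shows that $(\A_{m+1},\bar a)$ is boldface $\mathbf{\Delta}^0_{2m+1}$-categorical. Theorem~\ref{thm:montalban} ((4)$\Rightarrow$(2)) then gives a parameterless $\Sin{2m+1}$ Scott family for the expansion, and hence $\SR(\A_{m+1},\bar a)\leq 2m+1$.

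A cheaper alternative for the upper bound exists. Parameterless $\Sin{2m+1}$ orbit definitions in $\A_{m+1}$ (Theorem~\ref{thm:SR}) give definitions of orbits of $\bar a\bar b$. Automorphisms of $(\A_{m+1},\bar a)$ are exactly the automorphisms of $\A_{m+1}$ fixing $\bar a$, so the orbit of $\bar b$ in the expansion is defined by $\varphi_{\bar a\bar b}(\bar c,\bar y)$, with the constants $\bar c$ substituted for the first variables. This formula is still $\Sin{2m+1}$. I would use this argument and drop the relativized back-and-forth, since it is immediate.

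\emph{Main obstacle.} The expected obstacle lies in the lower bound rather than the upper bound. One must check that the hypothesis of Proposition~\ref{prop:transfer-param} really concerns copies of the expanded structure. This requires that the reserved components in Lemma~\ref{lem:reserve} realize $(\A_{m+1}[u],\bar a_u)$ \emph{up to isomorphism of pointed trees}, uniformly enough to be $Z$-computable. It also requires that the transfer argument's search tolerates the constants. Both points are already asserted in the cited results, so I would only verify that the root-coordinate case and repeated coordinates are handled consistently.
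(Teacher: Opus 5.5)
Your proposal is correct and is essentially the paper's proof: the lower bound feeds Theorem~\ref{thm:paramcoding} into Proposition~\ref{prop:transfer-param} with $n=2m$, exactly as the paper does. For the upper bound, the argument you finally adopt---substituting the constants into the parameter-free $\Sin{2m+1}$ orbit definitions of Theorem~\ref{thm:SR}, using that the automorphisms of $(\A_{m+1},\bar a)$ are those of $\A_{m+1}$ fixing $\bar a$---is the paper's argument, and the relativized back-and-forth you discard is not needed.
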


\begin{proof}
\emph{Lower bound.} By Theorem~\ref{thm:paramcoding}, for every $Z$ there are $Z$-computable copies of $(\A_{m+1},\bar a)$ between which every isomorphism computes $Z^{(2m)}$; since $Z^{(2m)}\not\leT Z^{(2m-1)}$, none is $Z^{(2m-1)}$-computable. Proposition~\ref{prop:transfer-param} with $n=2m$ gives $\SR(\A_{m+1},\bar a)\geq 2m+1$.

\emph{Upper bound.} By Theorem~\ref{thm:SR} every automorphism orbit of a finite tuple of $\A_{m+1}$ is defined by a parameter-free $\Sin{2m+1}$ formula. Let $\bar b$ be a finite tuple of the expansion $(\A_{m+1},\bar a)$ and let $\varphi(\bar y,\bar x)\in\Sin{2m+1}$ define the orbit of the concatenation $(\bar a,\bar b)$ in $\A_{m+1}$. Substituting the constants for $\bar y$ yields $\varphi(\bar a,\bar x)$, still $\Sin{2m+1}$, which defines exactly the orbit of $\bar b$ in $(\A_{m+1},\bar a)$, because the automorphisms of the expansion are precisely the automorphisms of $\A_{m+1}$ fixing $\bar a$. Hence every orbit of the expansion is $\Sin{2m+1}$-definable and $\SR(\A_{m+1},\bar a)\leq 2m+1$.
\end{proof}

\begin{corollary}\label{cor:pSR}
For every finite $m\geq 1$, $\pSR(\A_{m+1})=2m+1$.
\end{corollary}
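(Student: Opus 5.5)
The corollary follows directly from Theorem~\ref{thm:paramSR}, once we recall how $\pSR$ is defined in the preliminaries. There $\pSR(\A)$ is the least value of $\SR((\A,\bar p))$ as $\bar p$ ranges over finite parameter tuples. So the plan is just to evaluate this minimum using the fact that, by Theorem~\ref{thm:paramSR}, the function $\bar p\mapsto\SR(\A_{m+1},\bar p)$ is constant.

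First, Theorem~\ref{thm:paramSR} applies to every finite tuple $\bar a\in\A_{m+1}^{<\omega}$ and gives $\SR(\A_{m+1},\bar a)=2m+1$. This includes the empty tuple, where it recovers Theorem~\ref{thm:SR}. Hence the set $\{\SR(\A_{m+1},\bar p):\bar p\text{ finite}\}$ is the singleton $\{2m+1\}$, and its least element is $2m+1$. Both inequalities come from this one fact:
\begin{itemize}
\item taking $\bar p=\emptyset$ gives $\pSR(\A_{m+1})\le 2m+1$;
\item the lower bound $\SR(\A_{m+1},\bar p)\ge 2m+1$ holding for \emph{every} $\bar p$ gives $\pSR(\A_{m+1})\ge 2m+1$.
\end{itemize}

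The only point needing care is the matching of definitions. The expansion $(\A_{m+1},\bar p)$ in Theorem~\ref{thm:paramSR} names $\bar p$ by constants, while $\pSR$ is phrased via parameters. These two notions agree: a Scott family for the expansion is exactly a Scott family of formulas in the parameters $\bar p$, and this is how Theorem~\ref{thm:paramSR}'s upper bound was itself proved. The genuine content, that no choice of parameters lowers the rank, was already supplied by Lemma~\ref{lem:reserve}, Theorem~\ref{thm:paramcoding}, and Proposition~\ref{prop:transfer-param}. So I expect no real obstacle here.
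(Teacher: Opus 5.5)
Your proposal is correct and follows the paper's argument exactly: $\pSR(\A_{m+1})$ is by definition the least value of $\SR(\A_{m+1},\bar a)$ over finite tuples $\bar a$, and Theorem~\ref{thm:paramSR} says this value is $2m+1$ for every $\bar a$. Your remark about constants versus parameters does no harm, but it is not needed, since the paper already defines $\pSR$ through the expansions $(\A,\bar p)$.
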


\begin{proof}
$\pSR(\A_{m+1})$ is the least value of $\SR(\A_{m+1},\bar a)$ over finite tuples $\bar a$; by Theorem~\ref{thm:paramSR} every such value equals $2m+1$.
\end{proof}

\begin{theorem}\label{thm:main-general}
For every finite $m\geq 1$, $\SSC(\A_{m+1})=\Pin{2m+2}$.
\end{theorem}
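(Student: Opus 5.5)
The plan is to combine Corollary~\ref{cor:dichotomy} with Corollary~\ref{cor:pSR} through the $\SSC$--$\pSR$ dictionary of \cite[Table~1]{GR23}. By Corollary~\ref{cor:dichotomy}, $\SSC(\A_{m+1})$ is one of $\Sin{2m+1}$, $\dSin{2m+1}$, $\Pin{2m+2}$, and Theorem~\ref{thm:SR} already supplies a $\Pin{2m+2}$ Scott sentence. It therefore suffices to exclude the two candidates $\Sin{2m+1}$ and $\dSin{2m+1}$.

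The exclusion uses the rows of \cite[Table~1]{GR23} for a successor level $\alpha+1=2m+1$ with $\alpha=2m$. These rows state that $\SSC(\A)=\Sin{\alpha+1}$ or $\SSC(\A)=\dSin{\alpha+1}$ forces $\pSR(\A)=\alpha$ (and $\SR(\A)=\alpha+1$). The mechanism is Montalb\'an's characterization: a $\Sin{\alpha+1}$ (or $\dSin{\alpha+1}$) Scott sentence has the form $\exists\bar y\,\pi(\bar y)$ (conjoined with a $\Pin{\alpha+1}$ part), with $\pi\in\Pin{\alpha+1}$. Naming a witness tuple $\bar a$ makes $\pi(\bar a)$ a $\Pin{\alpha+1}$ Scott sentence of $(\A,\bar a)$, so $\SR(\A,\bar a)\le\alpha$. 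If the table is quoted in that form, applying it with $\alpha=2m$ gives some $\bar a$ with $\SR(\A_{m+1},\bar a)\le 2m$. This contradicts Theorem~\ref{thm:paramSR}, which gives $\SR(\A_{m+1},\bar a)=2m+1$ for every finite $\bar a$; equivalently, it contradicts $\pSR(\A_{m+1})=2m+1$ from Corollary~\ref{cor:pSR}. Hence neither $\Sin{2m+1}$ nor $\dSin{2m+1}$ is realized, and $\SSC(\A_{m+1})=\Pin{2m+2}$.

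The main obstacle is justifying the key implication directly rather than citing the table blindly. The implication to verify is that a $\Sin{2m+1}$ or $\dSin{2m+1}$ Scott sentence yields parameters $\bar a$ over which $(\A_{m+1},\bar a)$ has a $\Pin{2m+1}$ Scott sentence. For the $\Sin{2m+1}$ case I would argue as follows. Write the sentence as $\bigvee_i\exists\bar y\,\pi_i(\bar y)$ with $\pi_i\in\Pin{2m}$, and choose a disjunct and a witness $\bar a$ in $\A_{m+1}$. Every countable model $(\B,\bar b)$ of $\pi_i(\bar b)$ satisfies the Scott sentence, so $\B\cong\A_{m+1}$, but a priori not as pointed structures. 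The standard fix, following Montalb\'an, is to conjoin $\pi_i(\bar c)$ with the $\Pin{2m+1}$ sentence expressing that the $\Pin{2m}$-type of $\bar c$ is the right one and back-and-forth saturation holds. A cleaner route is to quote Montalb\'an's theorem that a $\Sin{\alpha+2}$ Scott sentence is equivalent to a $\Pin{\alpha+1}$ Scott sentence over parameters. At level $2m+1=(2m-1)+2$ this gives $\SR(\A_{m+1},\bar a)\le 2m$, which is exactly what Theorem~\ref{thm:paramSR} forbids. The $\dSin{2m+1}$ case reduces to the $\Sin{2m+1}$ conjunct in the same way, since the extra $\Pin{2m+1}$ conjunct is harmless after naming $\bar a$.
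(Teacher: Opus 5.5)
Your proposal is correct and follows the paper's route: Corollary~\ref{cor:dichotomy} narrows the answer to three candidates, and the $\SSC$--$\pSR$ correspondence of \cite[Table~1]{GR23} (equivalently, Montalb\'an's theorem that a $\Sin{\alpha+2}$ Scott sentence yields a $\Pin{\alpha+1}$ Scott sentence over parameters) contradicts $\pSR(\A_{m+1})=2m+1$ from Corollary~\ref{cor:pSR} for the two non-$\Pi$ candidates. Only the upper bound $\pSR\le 2m$ is needed, so it is harmless that your first statement of the $\Sin{2m+1}$ row is off by one (it actually forces $\pSR=2m-1$, as Remark~\ref{rem:general-route} notes); both non-$\Pi$ cases are handled most cleanly at once by observing $\Sin{2m+1},\dSin{2m+1}\subseteq\Sin{2m+2}$ and applying Montalb\'an's theorem at level $2m+2$, rather than reducing the $\dSin{2m+1}$ sentence to its $\Sin{2m+1}$ conjunct.
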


\begin{proof}
By Corollary~\ref{cor:dichotomy}, $\SSC(\A_{m+1})$ is one of $\Sin{2m+1}$, $\dSin{2m+1}$, $\Pin{2m+2}$. We read off the parameterized rank forced by each candidate from \cite[Table~1]{GR23}, using $\SR(\A_{m+1})=2m+1$ (Theorem~\ref{thm:SR}). A $\Pin{\alpha+1}$ Scott sentence gives $\pSR=\SR=\alpha$, so $\Pin{2m+2}$ forces $\pSR=2m+1$. A $\dSin{\alpha+1}$ Scott sentence gives $\pSR=\alpha$, so $\dSin{2m+1}$ forces $\pSR=2m$. A $\Sin{\alpha+2}$ Scott sentence gives $\pSR=\alpha$ (with $\SR=\alpha+2$), so $\Sin{2m+1}=\Sin{(2m-1)+2}$ forces $\pSR=2m-1$. The three candidates thus force the three \emph{distinct} values $2m-1,\,2m,\,2m+1$ of the parameterized rank; consequently $\SSC(\A_{m+1})=\Pin{2m+2}$ if and only if $\pSR(\A_{m+1})=2m+1$. Corollary~\ref{cor:pSR} supplies $\pSR(\A_{m+1})=2m+1$, so $\SSC(\A_{m+1})=\Pin{2m+2}$.
\end{proof}

\begin{remark}\label{rem:general-route}
The dictionary step above corrects a slip in the candidate triple's bookkeeping: the $\Sin{2m+1}$ case forces $\pSR=2m-1$, not $2m$, the $\Sigma$-index exceeding the parameterized rank by two (\cite[Table~1]{GR23}). The value is immaterial to the elimination, which needs only that the three candidates give three distinct parameterized ranks; at $m=1$ these are $1,2,3$, matching the proven rank-$2$ values $\SSC([2:0])=\Pin{3}$, $\SSC([2:j])=\dSin{3}$ ($1\leq j<\omega$), and $\SSC(\A_2)=\Pin{4}$. Unlike the rank-$2$ argument of Section~\ref{sec:ranktwo}, the proof here uses computability essentially --- through the relativized coding of Theorem~\ref{thm:relhard} and the transfer principle --- and it does not localize the non-$\Sin{2m}$-definable orbit. The substitution-lemma program of Section~\ref{sec:questions} would give a parallel, computability-free proof that also pins the critical orbit at $O^{m}_\omega$.
\end{remark}

\section{The substitution lemma and open questions}\label{sec:questions}

Theorem~\ref{thm:main-general} settles the trichotomy of Corollary~\ref{cor:dichotomy} in favour of the $\Pi$ case, by way of the parameterized rank: among the three candidates only $\Pin{2m+2}$ forces $\pSR(\A_{m+1})=2m+1$ (the others force $2m-1$ and $2m$; see the proof of Theorem~\ref{thm:main-general}). That proof uses computability essentially, through the relativized coding of Section~\ref{sec:relhard}, and does not localize the critical orbit. We record here a second, purely combinatorial route, generalizing the rank-$2$ analysis directly: it would re-prove Theorem~\ref{thm:main-general} without computability and pin the non-$\Sin{2m}$-definable orbit at $O^{m}_\omega$. It remains open for $m\geq 2$.

A $\Pi$/not-$\Pi$ \emph{dichotomy} would be settled by a single back-and-forth substitution lemma, as at rank~$2$. The \emph{trichotomy} of Corollary~\ref{cor:dichotomy} requires excluding $\Sin{2m+1}$ and $\dSin{2m+1}$ separately, and these are governed by transfers that bottom out at \emph{different} quantifier levels; we therefore split the rank-$2$ engine into two prongs. Realize $B^{(m)}_j:=[m+1:j]$ inside $\A_{m+1}$ by deleting all but $j$ of the level-one subtrees of type $[m:\omega]$; then $B^{(m)}_j\not\cong\A_{m+1}$, the number of $[m:\omega]$-subtrees being an isomorphism invariant.

\begin{conjecture}[Substitution lemma, two prongs]\label{conj:subst}
Let $m\geq 1$.
\begin{enumerate}
\item[(a)] \emph{($\Pi$-substitution; generalizes Claim~\ref{claim:pi1transfer} and Lemma~\ref{lem:transfers}(i)).} Every $\Pin{2m-1}$ formula true in $\A_{m+1}$ of a tuple $(r,\bar w)$ --- $r$ a fresh root of a $[m:\omega]$-subtree, $\bar w$ arbitrary witnesses, possibly inside deleted $[m:\omega]$-subtrees --- remains true after relocating $r$ to a fresh root of a $[m:k]$-subtree, for all sufficiently large finite $k$, with $\bar w$ carried to matched witnesses.
\item[(b)] \emph{($\Sigma$-richness; generalizes Lemma~\ref{lem:transfers}(ii)).} For every $\theta\in\Pin{2m}$ and every $\bar a$ with $\A_{m+1}\models\theta(\bar a)$: once $j$ exceeds the number of $[m:\omega]$-roots named by $\bar a$ (so $\bar a\subseteq B^{(m)}_j$), one also has $B^{(m)}_j\models\theta(\bar a)$, obtained by re-hosting the inner $\Pin{2m-2}$ witnesses of $\theta$ inside $B^{(m)}_j$.
\end{enumerate}
Consequently, peeling $\Pin{2m+1}=\bigwedge\forall\bar y\,(\Sin{2m})$ with $\Sin{2m}=\bigvee\exists\bar w\,(\Pin{2m-1})$, prong~(a) gives that every $\Pin{2m+1}$ sentence true in $\A_{m+1}$ holds in \emph{every} $B^{(m)}_j$; and peeling $\Sin{2m+1}=\bigvee\exists\bar y\,(\Pin{2m})$, prong~(b) gives that every $\Sin{2m+1}$ sentence true in $\A_{m+1}$ holds in $B^{(m)}_j$ for all sufficiently large $j$. Against the trichotomy of Corollary~\ref{cor:dichotomy}: (a) excludes a $\Pin{2m+1}$ Scott sentence (re-proving Theorem~\ref{thm:SR}); (b) excludes a $\Sin{2m+1}$ one; and (a) together with (b) excludes a $\dSin{2m+1}=\sigma\wedge\pi$ one, since $\pi$ survives in every $B^{(m)}_j$ by (a) and $\sigma$ in all large $B^{(m)}_j$ by (b), so $\sigma\wedge\pi$ holds in some $B^{(m)}_j\not\cong\A_{m+1}$. Hence $\SSC(\A_{m+1})=\Pin{2m+2}$ by the rank-$2$ method --- re-proving Theorem~\ref{thm:main-general} without computability, exactly as at rank~$2$ (Theorem~\ref{thm:rank2}).
\end{conjecture}

The matrix-level asymmetry --- $\Pin{2m-1}$ in (a) versus $\Pin{2m-2}$ in (b) --- is precisely the rank-$2$ phenomenon. At $m=1$, prong~(a) is the $\Pin{1}$ transfer of Claim~\ref{claim:pi1transfer} (used again, in the relocation direction, inside Lemma~\ref{lem:transfers}(i)), while prong~(b) bottoms out at $\Pin{0}$, i.e.\ at quantifier-free matrices --- which is why the proof of Lemma~\ref{lem:transfers}(ii) never invokes Claim~\ref{claim:pi1transfer} but applies the Extension Lemma~\ref{lem:ext} directly to its $\Sin{1}$ conjuncts. The expected proof for $m\geq 2$ is an induction on $m$ run jointly with the definability ladder for the types --- the inductive claim that ``$T[u]\cong[k:j]$'' is $\dSin{2k-1}$- and ``$T[u]\cong[k:\omega]$'' is $\Pin{2k}$-definable, each level of rank costing exactly two quantifier levels (the syntactic shadow of \cite[Lem.~2.1]{Mah19}) --- with the Extension Lemma~\ref{lem:ext} replaced by its rank-$(m+1)$ analogue. The asymmetric back-and-forth toolkit for trees of Harrison-Trainor and Kim \cite[\S\S2--3]{HTK26}, in particular their localization of existential witnesses to subtrees, is well suited to this induction, and we expect their machinery to shorten the bookkeeping considerably. The same induction would localize the non-$\Sin{2m}$-definable orbit of Theorem~\ref{thm:SR} at $O^{m}_\omega$, giving the general case the same dual (computability-free) proof that rank~$2$ enjoys.

\begin{question}\label{q:table}
Which Scott sentence complexities are realized by trees of finite rank? The trees $\A_{m+1}$ pin the even levels $\Pin{2m+2}$ of the $\Pi$-column --- proved at rank $2$ (Theorem~\ref{thm:rank2}, with a computable witness by Proposition~\ref{prop:computable-ss}) and proved in general (Theorem~\ref{thm:main-general}). For the remaining levels, $[m+1:0]$ is the natural candidate for the odd $\Pi$-level $\Pin{2m+1}$, while the trees $[m+1:j]$ with finite $j\geq 1$ are the natural candidates for the $\dSin{2m+1}$-level: by Remark~\ref{rem:localize} the $\omega$-orbit of each $[2:j]$ ($j\geq 1$) is already not $\Sin{2}$-definable, and the witness ``there exist exactly $j$ subtrees of type $[m:\omega]$'' has the characteristic $\mathrm{d}$-shape (a $\Sin{2m+1}$ ``at least $j$'' conjoined with a $\Pin{2m+1}$ ``at most $j$''). At rank $2$ one checks directly that $\SSC([2:0])=\Pin{3}$ and $\SSC([2:j])=\dSin{3}$ for $1\leq j<\omega$, alongside $\SSC([2:\omega])=\Pin{4}$ from Theorem~\ref{thm:rank2}; in particular this family realizes only $\Pi$- and $\mathrm{d}$-$\Sigma$-complexities. Which trees, if any, realize the $\Sigma$-column, and which complexities are excluded --- in analogy with the impossibility of $\Sin{3}$ for linear orders \cite{GR23} --- remain open. A complete table for bounded rank, in analogy with \cite[Table~2]{GR23} for linear orders, is the goal.
\end{question}

\begin{question}\label{q:gaps}
Harrison-Trainor and Kim \cite{HTK26} prove that every $\Pi_\alpha$-axiomatizable class of trees has a member of Scott rank at most $\alpha+2$, with a lower-bound example at the bottom level. Sharpness for general $\alpha$ is open. The calibrated ambiguity of the types --- a $\Pin{2}\setminus\Sin{2}$ orbit at rank~$2$ (Theorem~\ref{thm:notSigma2}), and at each rank an orbit that Theorem~\ref{thm:SR} shows is not $\Sin{2m}$-definable and that the definability ladder of Conjecture~\ref{conj:subst} would place in $\Pin{2m}$ --- is the natural raw material for lower bounds, but a cautionary computation from Section~\ref{sec:ranktwo} shows that single-level coding does not suffice: the $\Pin{3}$ theory ``rank $\leq 2$ and every finite degree occurs infinitely often'' has the models $[2:0],[2:1],\dots,[2:\omega]$, of minimum Scott rank $2$ (attained at $[2:0]$), and Lemma~\ref{lem:transfers}(i) shows no $\Pin{3}$ strengthening can exclude the low model. A sharpness example must make the type-ambiguity survive at \emph{every} level of every model --- precisely the structure that the induction of Conjecture~\ref{conj:subst} would quantify. Can the representation machinery of \cite[Lem.~3.1]{Mah19} be converted into $\Pi_\alpha$ theories of trees all of whose models have Scott rank close to $\alpha+2$?
\end{question}

\begin{question}\label{q:infinite}
Extend the analysis to infinite ranks. For infinite $\alpha$, \cite{Mah19} gives strong degree of categoricity $\mathbf{0}^{(2\alpha+1)}$ at rank $\alpha+1$ and $\mathbf{0}^{(\alpha)}$ at limit rank $\alpha$, with the familiar finite/infinite shift; the boldface translation will inherit this shift, and the rank-$(\gamma+2)$ case ($\gamma$ limit) requires computing the back-and-forth structure of the trees $B(y,s)$ of \cite[Lem.~3.6]{Mah19} from scratch. The limit-rank cases should be compared with the limit analysis of \cite{GR23}.
\end{question}

\begin{question}\label{q:general-principle}
The relativize-and-transfer method of Sections~\ref{sec:transfer}--\ref{sec:relhard}
converts any degree-of-categoricity \emph{lower} bound that is uniform in an oracle,
and whose witnessing isomorphism type is oracle-independent, into
non-$\Sin{n}$-definability of an orbit; pinning the exact Scott sentence complexity
then requires, in addition, a matching boldface $\mathbf{\Delta}^0_n$-categoricity
upper bound. Which known degree-of-categoricity constructions meet the hypotheses,
and what complexities result? The structures of Csima--Franklin--Shore \cite{CFS13},
realizing $\mathbf{0}^{(\alpha)}$ for computable $\alpha$, are natural candidates,
though their characteristic cases are infinite-level and so meet
Proposition~\ref{prop:transfer} only after the finite/infinite shift of
Question~\ref{q:infinite} is resolved. In a different direction,
Lempp--McCoy--Miller--Solomon \cite{LMMS05} classify the \emph{computably} categorical
trees of finite height; combined with the degrees of categoricity from \cite{Mah19},
this raises the question of which Scott sentence complexities are realized across the
whole class of finite-height trees, not only by the maximal-degree members
$\A_{m+1}$ analyzed here.
\end{question}

\subsection*{Statement of contributions}
The authors are listed alphabetically by surname. The construction relativized here, together with the transfer principle (Section~\ref{sec:transfer}), the relativized coding (Section~\ref{sec:relhard}), the computation of $\SR(\A_{m+1})$ and the trichotomy (Section~\ref{sec:SR}), and the complete rank-$2$ analysis (Section~\ref{sec:ranktwo}), are due to M.~A.~Mahmoud, building on his earlier work \cite{Mah19}. The parameter-reservation argument of Section~\ref{sec:general} --- the Reservation Lemma~\ref{lem:reserve}, the parameterized coding Theorem~\ref{thm:paramcoding}, and the deduction $\pSR(\A_{m+1})=2m+1\Rightarrow\SSC(\A_{m+1})=\Pin{2m+2}$ (Theorem~\ref{thm:main-general}) --- is due to M.~Mirabi. The final exposition is the work of both authors.

\subsection*{Acknowledgement}
The finite-rank tree construction relativized here is from M.~A.~Mahmoud's earlier
work \cite{Mah19}; the boldface and Scott-theoretic analysis of
Sections~\ref{sec:transfer}--\ref{sec:ranktwo} was developed by him with the
assistance of AI language models, used for literature exploration, for drafting and
refining exposition and proofs, and as an adversarial check on the arguments. The
tools used were Google's Gemini Pro and Anthropic's Claude (the Fable~5 model during its
brief period of availability, and Opus version~4.8). The parameter-reservation
argument of Section~\ref{sec:general} was developed by M.~Mirabi without the use of
any AI tool. All definitions, theorems, and proofs were verified by the authors, who
are jointly responsible for the contents and for any errors; a full description of the
AI use has been provided to the editors.

\end{document}